\colorlet{darkblue}{blue!50!black}
\renewcommand{\Re}{\mathop{\rm Re}\nolimits}
\renewcommand{\Im}{\mathop{\rm Im}\nolimits}
\newcommand{\Leb}{\mathop{\mathrm{Leb}}\nolimits}
\newcommand{\p}{\partial}
\newcommand{\e}{\varepsilon}
\newcommand{\bfe}{{\vec{\varepsilon}}}
\newcommand{\D}{{\mathbb D}}
\newcommand{\C}{{\mathbb C}}
\newcommand{\R}{{\mathbb R}}
\newcommand{\Z}{{\mathbb Z}}
\newcommand{\IP}{{\mathbb P}}
\newcommand{\I}{{\mathbb I}}
\newcommand{\E}{{\mathbb E}}
\newcommand{\T}{{\mathbb T}}
\newcommand{\N}{{\mathbb N}}
\newcommand{\ty}{\infty}
\newcommand{\mmmm}{{\mathfrak m}}
\newcommand{\bbar}{\boldsymbol{|}}
\newcommand{\XXX}{{\boldsymbol{X}}}
\newcommand{\barr}{{\boldsymbol|}}
\newcommand{\scH}{{\mathscr H}}
\newcommand{\BB}{{\cal B}}
\newcommand{\DD}{{\cal D}}
\newcommand{\EE}{{\cal E}}
\newcommand{\FF}{{\cal F}}
\newcommand{\GG}{{\cal G}}
\newcommand{\HH}{{\cal H}}
\newcommand{\II}{{\cal I}}
\newcommand{\KK}{{\cal K}}
\newcommand{\LL}{{\cal L}}
\newcommand{\NN}{{\cal N}}
\newcommand{\OO}{{\cal O}}
\newcommand{\PP}{{\cal P}}
\newcommand{\RR}{{\cal R}}
\newcommand{\UU}{{\cal U}}
\newcommand{\VV}{{\cal V}}
\newcommand{\WW}{{\cal W}}
\newcommand{\XX}{{\cal X}}
\newcommand{\YY}{{\cal Y}}
\newcommand{\ZZ}{{\cal Z}}
\newcommand{\lag}{\langle}
\newcommand{\rag}{\rangle}
\newcommand{\dd}{{\textup d}}
\newcommand{\PPPP}{{\mathfrak P}}
\newcommand{\FFFF}{{\mathfrak F}}
\newcommand{\XXXX}{{\mathfrak X}}
\newcommand{\LLLL}{{\mathfrak L}}
\newcommand{\MMMM}{{\mathfrak M}}
\newcommand{\NNNN}{{\mathfrak N}}
\newcommand{\lspan}{\mathop{\rm span}\nolimits}
\newcommand{\supp}{\mathop{\rm supp}\nolimits}
\newcommand{\diver}{\mathop{\rm div}\nolimits}
\newcommand{\dist}{\mathop{\rm dist}\nolimits}
\newcommand{\Id}{\mathop{\rm Id}\nolimits}
\newcommand{\diam}{\mathop{\rm diam}\nolimits}
\newcommand{\Ker}{\mathop{\rm Ker}\nolimits}
\theoremstyle{plain}
\newtheorem{theorem}{Theorem}[section]
\newtheorem{lemma}[theorem]{Lemma}
\newtheorem{proposition}[theorem]{Proposition}
\newtheorem{corollary}[theorem]{Corollary}
\theoremstyle{definition}
\newtheorem{definition}[theorem]{Definition}
\theoremstyle{remark}
\newtheorem{remark}[theorem]{Remark}
\newtheorem{example}[theorem]{Example}
\numberwithin{equation}{section}
\begin{document}
\author{Sergei Kuksin\footnote{Institut de Math\'emathiques de Jussieu--Paris Rive Gauche, CNRS, Universit\'e Paris Diderot, UMR 7586, Sorbonne Paris Cit\'e, F-75013, Paris, France \& School of Mathematics, Shandong University, Jinan, PRC \& Saint Petersburg State University, Universitetskaya nab., St. Petersburg, Russia; e-mail: \href{mailto:Sergei.Kuksin@imj-prg.fr}{Sergei.Kuksin@imj-prg.fr}} \and Vahagn~Nersesyan\footnote{Laboratoire de Math\'ematiques, UMR CNRS 8100, UVSQ, Universit\'e Paris-Saclay, 45, av. des Etats-Unis, F-78035 Versailles, France;  e-mail: \href{mailto:Vahagn.Nersesyan@math.uvsq.fr}{Vahagn.Nersesyan@math.uvsq.fr}} \and
Armen Shirikyan\footnote{Department of Mathematics, University of Cergy--Pontoise, CNRS UMR 8088, 2 avenue Adolphe Chauvin, 95302 Cergy--Pontoise, France \& Centre de Recherches Math\'ematiques, CNRS UMI 3457, Universit\'e de Montr\'eal, Montr\'eal,  QC, H3C 3J7, Canada \& Department of Mathematics and Statistics, McGill University, 805 Sherbrooke Street West, Montreal, QC, H3A 2K6, Canada; e-mail: \href{mailto:Armen.Shirikyan@u-cergy.fr}{Armen.Shirikyan@u-cergy.fr}}}
\title{Exponential mixing for a class of dissipative PDEs with bounded degenerate noise}
\date{}
\maketitle

\begin{abstract}
We study a class of discrete-time random dynamical systems with compact phase space. Assuming that the deterministic counterpart of the system in question possesses a dissipation property, its linearisation is approximately controllable, and the driving noise is bounded and has a decomposable structure, we prove that the corresponding family of Markov processes has a unique stationary measure, which is exponentially mixing in the dual-Lipschitz metric. The abstract result is applicable to nonlinear dissipative PDEs perturbed by a bounded random force which affects only a few Fourier modes. We assume that the nonlinear PDE in question is well posed, its nonlinearity  is non-degenerate in the sense of the control theory, and the random force is a regular and bounded function of time which satisfies some decomposability and observability hypotheses. This class of forces includes random Haar series, where the coefficients for high Haar modes decay sufficiently fast. In particular, the result applies to the 2D Navier--Stokes system and the nonlinear complex Ginzburg--Landau equations. The proof of the abstract theorem uses the coupling method, enhanced by the Newton--Kantorovich--Kolmogorov fast convergence.

\smallskip
\noindent
{\bf AMS subject classifications:} 35Q10, 35Q56, 35R60, 37A25, 37L55, 60H15, 76M35

\smallskip
\noindent
{\bf Keywords:} Markov process, stationary measure, mixing, Navier--Stokes system, Ginzburg--Landau equations, Newton--Kantorovich--Kolmogorov fast convergence, approximate controllability, Haar series
\end{abstract}

\newpage
\tableofcontents

\setcounter{section}{-1}

\section{Introduction}
\label{s0} 
The problem of uniqueness of a stationary measure for randomly forced dissipative PDEs attracted a lot of attention in the last twenty years. It is by now well understood that when all determining modes of the unforced PDE are directly affected by the noise, the problem has a unique stationary distribution, which is exponentially stable as $t\to\infty$ in an appropriate metric. We refer the reader to the papers~\cite{FM-1995,KS-cmp2000,EMS-2001,BKL-2002} for the first achievements and to the book~\cite{KS-book} for a detailed description of the results in this setting. The case when the random perturbation does not act directly on the determining modes of the flow is much less understood (see the literature review below), and it is the subject of the present article.   

To describe our results, in this introduction we confine ourselves to discussing two examples when our theory is applicable (other equations will be considered in the main text). Namely, we consider the 2D Navier--Stokes system and the Ginzburg--Landau equations, both perturbed by a Haar coloured noise. Thus, we study the following problems:
\begin{description}
\item[2D Navier--Stokes system:]
	\begin{equation} \label{0.1}
	\p_t u+\langle u,\nabla\rangle u-\nu\Delta u+\nabla p=\eta(t,x), \quad \diver u=0, 
\end{equation}
where $x\in \T^2 = \R^2/2\pi\Z^2$, $u=(u_1,u_2)$ and~$p$ are unknown velocity field and pressure, $\nu>0$ is a parameter, $\Delta$~is the Laplace operator, and $\langle u,\nabla\rangle =u_1\p_1+u_2\p_2$. 
\item[3D Ginzburg--Landau equation:]
\begin{equation} \label{cgl-intro}
	\p_tu-(\nu+i)\Delta u+\gamma u+ic|u|^{2m}u=\eta(t,x),
\end{equation}
where $x\in \T^3 = \R^3/2\pi\Z^3$, $m=1\text{ or }2$, $u=u(t,x)$ is a complex-valued unknown function, and $\nu, \gamma,c>0$ are some parameters.
\end{description}
In both cases, $\eta$ is an external (random) force. Furthermore, when dealing with the Navier--Stokes system, we assume that all the functions have zero mean value in~$x$. 

Let us introduce the trigonometric basis~$\{\varphi_i\}$ in the space of $2\pi$-periodic (vector) functions of~$x$.\footnote{In the case of the Navier--Stokes system, the basis $\{\varphi_i\}$ is composed of divergence-free $\R^2$-valued functions on~$\T^2$ with zero mean value, while in the case of the Ginzburg--Landau equation, they are complex-valued functions on~$\T^3$.} We write the external force~$\eta$ in the form
\begin{equation} \label{0.2}
	\eta(t,x)=\sum_{i=1}^Nb_i\eta^i(t)\varphi_i(x), 
\end{equation}
where~$b_i$ are non-zero numbers, and~$\eta^i$ are independent bounded real-valued random processes that are distributed as a fixed process~$\tilde\eta(t)$ constructed as follows. Let $\{h_0,h_{jl}\}$ be the $L^\infty$-normalised Haar system defined by relations~\eqref{6.8} and~\eqref{6.9}; cf.~Section~22 in~\cite{lamperti1996}. We set
\begin{equation} \label{0.3}
	\tilde\eta(t)=\sum_{k=0}^\infty \xi_k h_0(t-k)
	+\sum_{j=1}^\infty c_j\sum_{l=0}^\infty \xi_{jl}h_{jl}(t),
\end{equation}
where $\{c_j\}$ is a sequence of non-zero numbers going to zero\footnote{\label{FN1}In the case when $c_j=2^{j/2}$ for $j\ge1$ and $\{\xi_k,\xi_{jl}\}$ are independent random variables with centred normal law of unit dispersion, the series~\eqref{0.3} converges to the white noise; see Theorem~1 in~\cite[Section~22]{lamperti1996}. Moreover, by Donsker's invariance principle (see \cite[Section~8]{billingsley1999}), the integral of~\eqref{0.3} converges to the Brownian motion on large time scales.} sufficiently fast, and $\{\xi_k,\xi_{jl}\}$ is a family of independent identically distributed (i.i.d.) real-valued random variables. Processes of the form~\eqref{0.2} are called {\it coloured noises\/} and are widely used in engineering sciences; e.g., see~\cite[Section~6.1]{vanetten2006}. We thus consider the dynamics of Equations~\eqref{0.1} and~\eqref{cgl-intro} subject to the coloured noise~\eqref{0.2}. Because of the time correlations of finite depth, the corresponding trajectories do not form a Markov process. However, their restrictions to integer times do, and our aim is to describe the large-time behaviour of the corresponding discrete-time processes. The following theorem is a consequence of the main result of this paper (see Theorem~\ref{t1.1}) and is valid for both the Navier--Stokes system and the complex Ginzburg--Landau equation. 

\begin{theorem} \label{MT}
	In addition to the above hypotheses, assume that $c_j=C j^{-q}$ for all $j\ge1$ and some $C>0$ and $q>1$, and the law of the random variables $\{\xi_k,\xi_{jl}\}$ has a Lipschitz-continuous density~$\rho$ such that $0\in\supp\rho\subset[-1,1]$. Then there is an integer $N_0\ge1$ such that, for any $N\ge N_0$ and $\nu>0$, the Markov process obtained by restricting the trajectories $u(t)$ to integer times has a unique stationary measure~$\mu_\nu$. Moreover, for any solution~$u(t)$, we have 
	\begin{equation} \label{0.4}
		\DD(u(k))\rightharpoonup\mu_\nu\quad\mbox{as $k\to\infty$, $k\in\N$},
	\end{equation}
	where $\DD(\zeta)$ stands for the law of a random variable~$\zeta$, and the weak convergence in~\eqref{0.4} is understood in the sense of measures on the space of square-integrable (vector) functions on the torus. Finally, convergence~\eqref{0.4} holds exponentially fast in the dual-Lipschitz metric (defined below in {\rm Notation and conventions}).
\end{theorem}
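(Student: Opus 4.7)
The plan is to derive Theorem~\ref{MT} from the abstract main result Theorem~\ref{t1.1} by checking its hypotheses for both PDEs. Fix $\nu>0$. Well-posedness of~\eqref{0.1} and~\eqref{cgl-intro} in the standard energy spaces, together with the $L^\infty$-boundedness of $\eta$—which follows from $\supp\rho\subset[-1,1]$, the finiteness of $N$, and $c_j=O(j^{-q})$—gives that trajectories enter a bounded absorbing ball; classical parabolic smoothing then upgrades this to a compact absorbing set in $L^2$. The time-$1$ flow restricted to this set defines the discrete-time Markov process of Theorem~\ref{MT}. The dissipation property demanded in the abstract theorem follows from the standard energy estimate, with the viscous term $-\nu\Delta$ yielding exponential contraction at the linear level.

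Next, the noise must be put in the decomposable form required by the abstract framework. On each unit interval $[k,k+1]$ the restriction of~\eqref{0.3} splits as $\xi_k h_0(\cdot-k)+\sum_{j\ge 1}c_j\sum_{l}\xi_{jl}h_{jl}$; truncating at a Haar scale $J$ yields a finite-dimensional random input whose joint law has a Lipschitz density (by independence and the assumption on $\rho$), while the residual is uniformly bounded by $\sum_{j>J}c_j\lesssim J^{1-q}\to 0$ since $q>1$. Combined with the expansion~\eqref{0.2} over $N$ spectral directions, this gives exactly the decomposability and observability demanded: the forcing over one period is a finite-dimensional, absolutely continuous random vector with a regular density plus an arbitrarily small bounded perturbation.

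The last ingredient is approximate controllability of the linearisation along smooth reference trajectories, using controls supported on $\{\varphi_1,\ldots,\varphi_N\}$. This rests on the Agrachev--Sarychev saturation property: starting from the first $N$ Fourier modes, iterated Lie brackets with the bilinear term $\langle u,\nabla\rangle u$ (for Navier--Stokes) or with $|u|^{2m}u$ (for Ginzburg--Landau) recover the full Fourier basis provided $N\ge N_0$ for some finite $N_0$; this is the control-theoretic non-degeneracy of the nonlinearity required by Theorem~\ref{t1.1}. The hard part is meshing this with the noise structure: the coupling construction underlying the abstract theorem must realise prescribed small corrections of the solution by tuning the coefficients $\xi_{jl}$ within the $N$ active spatial modes, and one must check that the truncated Haar component is flexible enough to implement the approximate controls produced by the saturation argument while preserving the density-based regularity needed for the coupling. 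Once this interlocking of dissipation, decomposable regular noise, and approximate controllability is in place, Theorem~\ref{t1.1} yields the existence, uniqueness, and exponential mixing of~$\mu_\nu$ in the dual-Lipschitz metric asserted in Theorem~\ref{MT}.
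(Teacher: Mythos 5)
Your overall strategy (deduce Theorem~\ref{MT} from Theorem~\ref{t1.1} by checking a compact absorbing set, dissipativity, decomposability of the noise, and approximate controllability of the linearisation via saturation of the low Fourier modes) is the paper's route. But there is a genuine gap at the point you yourself flag as ``the hard part'': the verification of Hypothesis~\hyperlink{H3}{(H$_3$)} requires not only the saturation property of $\{\varphi_1,\dots,\varphi_N\}$ but also that $\ell$-almost every realisation of the Haar noise is \emph{Lipschitz-observable} (Definition~\ref{d6.1}), and your argument does not establish this. Your claim that truncating the Haar series at scale $J$ gives ``a finite-dimensional, absolutely continuous random vector with a regular density plus an arbitrarily small bounded perturbation'' and that this ``gives exactly the decomposability and observability demanded'' is not correct: absolute continuity of a finite-dimensional marginal plus a small remainder says nothing about the property that no nontrivial relation $\sum_i a_i(t)\,\eta^i(t)=b(t)$ with Lipschitz $a_i$ and continuous $b$ can hold along the trajectory. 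In the paper this is exactly where the assumption $c_j=Cj^{-q}$, $q>1$, does its real work: one looks at the jumps of the process at dyadic points, whose sizes are of order $c_j\,\zeta$ with $\zeta$ admitting a density bounded as in~\eqref{rhojrho}, proves an anti-concentration bound for the determinant of the jump matrix (Corollary~\ref{r2.9}), applies Borel--Cantelli, and checks the quantitative invertibility $\|R_\e^{-1}(s)\|\le C\e^{-\theta}$ with $\theta<1$ required by Proposition~\ref{p6.3}. You use $q>1$ only to bound the tail $\sum_{j>J}c_j$, which is a different and inessential role; without the jump/determinant analysis the observability, and hence \hyperlink{H3}{(H$_3$)}, is unproved.

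Two further, smaller points. First, decomposability needs no truncation: the Haar expansion~\eqref{0.3} combined with~\eqref{0.2} is literally of the form~\eqref{1.3} with respect to the basis $\{h_0(\cdot)\varphi_i,\,h_{jl}(\cdot)\varphi_i\}$, with square-summable coefficients $b_i c_j$ and Lipschitz one-dimensional densities; your ``finite-dimensional part plus small perturbation'' formulation verifies a condition that is not the hypothesis of Theorem~\ref{t1.1}. Second, even granting observability, the passage from saturation plus observability to density of the image of $(D_\eta S)(u,\eta)$ is a separate argument (triviality of the kernel of the Gramian~\eqref{gramian}, the dual equation~\eqref{lNSdual}, and the inductive differentiation leading to~\eqref{obs}); describing it as ``tuning the coefficients $\xi_{jl}$ in the coupling construction'' conflates verifying \hyperlink{H3}{(H$_3$)} with re-proving the abstract coupling theorem, and for the Ginzburg--Landau case one also needs the modified norm $\bbar\cdot\bbar_\delta$ to get the contraction~\eqref{1.2} in $H^1$ and the fact that $0\in\supp\rho$ places $\hat\eta=0$ in $\KK$.
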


If $u_\nu(t), t\ge 0$, is a solution of~\eqref{0.1} or~\eqref{cgl-intro} such that $\DD(u_\nu(0))=\mu_\nu$, then $u_\nu(t)$ is a statistically periodic process, $\DD(u_\nu(t))= \DD(u_\nu(t+1))$ for each $t\ge0$, and by Theorem~\ref{MT}, for any solution~$u(t)$ the dual-Lipschitz distance between $\DD(u(t))$  and $\DD(u_\nu(t))$ goes to zero as $t\to\infty$.

To prove Theorem~\ref{MT}, we shall establish a general result on exponential mixing for discrete-time Markov processes and show that it applies to the problems~\eqref{0.1} and~\eqref{cgl-intro}. Moreover, our general result is applicable to other dissipative PDEs, such as reaction-diffusion equations with polynomial nonlinearities, viscous primitive equations of large scale ocean dynamics, and multidimensional Burgers system in non-potential\footnote{The one-dimensional Burgers equation and multi-dimensional Burgers system in the potential setting can be treated by softer tools; see~\cite{sinai-1991,boritchev-2016,shirikyan-cup2017}.} setting (these examples are not treated in this paper for reasons of space). As was mentioned above, there are only a few works dealing with highly degenerate noise not acting directly on the determining modes of the unperturbed dynamics. Namely, the existence of densities for finite-dimensional projections of stationary measures for solutions of the Navier--Stokes system  was studied in~\cite{MP-2006,AKSS-aihp2007}. Hairer and Mattingly~\cite{HM-2006,HM-2011} investigated the Navier--Stokes system perturbed by a finite-dimensional white noise and established the uniqueness of stationary measure and its exponential stability in the dual-Lipschitz metric (see also~\cite{HMS-2011}). F\"oldes at~al.~\cite{FGRT-2015} proved a similar result for the Boussinesq system, assuming that the random noise acts only on the equation for temperature. Finally, the case in which the random perturbation is localised in the physical space and time was studied in~\cite{shirikyan-asens2015} (see also~\cite{shirikyan-2018} for the boundary-driven Navier--Stokes system). 

\smallskip
This work is a continuation of the research, started in  \cite{AKSS-aihp2007} in order to prove the mixing for systems with degenerate noise by the basic Markov techniques and methods of control theory, instead of the Malliavin calculus used in~\cite{HM-2006,HM-2011, FGRT-2015}. Namely, in~\cite{AKSS-aihp2007}  we evoked those techniques to establish that the finite-dimensional projections of distributions of solutions for  systems with various  degenerate noises have densities against the Lebesque measure. The results of~\cite{AKSS-aihp2007} do not provide any information about the regularity of densities, in contrast to the infinite-dimensional version of the Malliavin calculus developed in~\cite{MP-2006}. Its further development was a key ingredient of the proof of ergodicity in~\cite{HM-2006, HM-2011}. In particular, the question of almost sure invertibility of Malliavin's matrix plays a crucial role, and Tikhonov's regularisation is used to construct its approximate inverse. It is well known that, in the control theory, Malliavin's matrix corresponds to the Gramian, responsible for the controllability of the linearised system. This observation led us to the idea to use the linearised controllability in Doeblin's scheme to prove the mixing. Carrying out this idea in the form of a quadratic convergence procedure, we arrived at a ``soft" proof of mixing in systems with various random forces, in difference with the ``hard" proof, based on the exact formulas from the Malliavin calculus. The latter requires the forces to be Gaussian. In this paper, we work with systems stirred by bounded random forces (see below), but we are certain that our approach also applies to systems  perturbed  by Gaussian forces.

\subsubsection*{Bounded random forces versus white in time forces} 
In our work we study nonlinear PDEs perturbed by random forces that are, as a function of time, bounded processes of the type of {\it random Haar series\/} (see~\eqref{0.3}), while it is more traditional in mathematical physics to use the forces that are random processes {\it white in time\/}. What are the advantages and disadvantages of the former class of forces compared to the latter? The first disadvantage is the tradition: one hundred years ago, in Langevin's era, the white in time forces were successfully used to model  systems from statistical physics, and since then they  were exploited  in other problems, usually without serious discussion of their adequacy. Secondly, white-forced equations have useful algebraical features coming from Ito's formula. On the  other hand, bounded random forces, exactly due to their boundedness, serve better to build models for some specific physical problems, where unbounded forces make no sense (e.g., they are being used in modern meteorology). Secondly, they have a number of serious analytical advantages. Namely, the corresponding stochastic equations are always well posed if so are the deterministic equations, while the equations, perturbed by white noise, may be not (or their solutions may satisfy only weak a priori estimates, cf. Example~\ref{e-primitive} below). What  is more important is that for systems with the random forces which we advocate, the mixing property can be established for significantly broader class of PDEs. Indeed, if the nonlinearity of the unperturbed deterministic equation is Hamiltonian and polynomial, and the random perturbation is white in time, then the existing techniques apply to establish the mixing only if the nonlinearity is at most cubic.\footnote{See the paper~\cite{KN-2013}, where the mixing is proved  for the white-forced cubic CGL equations, and the difficulty coming from  nonlinearities of higher degree is explained.} At the same time, the main theorem of our work can be used to prove the mixing property for equations with nonlinearity of any degree, cf.\ Eq.~\eqref{cgl-intro}. For the stochastic 2D Navier--Stokes equation~\eqref{0.1} (where the nonlinearity is quadratic) with degenerate white in time force~$\eta$,  the exponential mixing is proved in the  papers~\cite{HM-2006, HM-2011}  based on an infinite-dimensional version of the Malliavin calculus developed in~\cite{MP-2006}. If the random force~$\eta$ is bounded and degenerate, then the proof of the exponential mixing, presented in our work, is significantly shorter and, we believe, conceptually clearer. A subclass of the random forces which we consider---the random Haar series~\eqref{0.3}---has a number of similarities with the white forces (and the latter may be obtained as a limiting case of the former, see footnote~\ref{FN1}). In particular, the forces~\eqref{0.3} have independent components with arbitrarily short time scales, which simplifies the verification for them of various non-degeneracy properties (e.g., see Section~\ref{s6.2}, where we show that these forces are Lipschitz-observable).

\medskip
The paper is organised as follows. In Section~\ref{s1}, we formulate our main result on the uniqueness and exponential mixing of a stationary measure for discrete-time Markov processes possessing some controllability properties and describe briefly its applications. To simplify the reading of the paper, in Section~\ref{s1.3} we describe the general philosophy of the proof  and discuss some analogies between the coupling scheme for PDEs with noise, used in our proof, and the Newton--Kantorovich--Kolomogorov fast convergence. Section~\ref{s3} is devoted to the proof of the main result, and Section~\ref{s-auxiliary-results} gathers some auxiliary assertions.  In Section~\ref{s4}, we apply our result to the 2D Navier--Stokes system and the complex Ginzburg--Landau equations perturbed by a random noise. Sections~\ref{s6} and~\ref{s-satset} describe some classes of random noises that are allowed in our approach. Finally, the Appendix gathers some auxiliary results used in the main text.  

\subsubsection*{Acknowledgements} 
We thank V.\,I.~Bogachev for discussion on the measurable version of gluing lemma, which was established in the papers~\cite{BM-doklady2019,BM-2019} on our request. We also thank the {\it Institut Henri Poincar\'e\/} in Paris for hosting our working group {\it Control Theory and Stochastic Analysis\/}, at which this article was initiated. This research was supported by the {\it Agence Nationale de la Recherche\/} through  the grants ANR-10-BLAN~0102 and ANR-17-CE40-0006-02. SK  thanks the {\it Russian Science Foundation\/} for support through the grant  18-11-00032. VN and AS were supported by the CNRS PICS {\it Fluctuation theorems in stochastic systems\/}. The research of AS was carried out within the MME-DII Center of Excellence (ANR-11-LABX-0023-01) and supported by {\it Initiative d'excellence Paris-Seine\/}.

\subsubsection*{Notation and conventions}
Let~$X$ be a Polish (i.e., complete separable metric) space with a distance $d_X(u,v)$ and the Borel $\sigma$-algebra $\BB(X)$. We denote by~$B_X(a,R)$ the closed ball of radius $R > 0$ centred at $a\in X$ and write $\dot B_X(a,R)$ for the corresponding open ball. If $X$ is a Banach space and $a=0$, we write $B_X(R)$ instead of $B_X(0,R)$. We use the following notation in which all abstract Banach and metric spaces are assumed to be separable:

\smallskip
\noindent
$\LL(E,F)$ is the space of bounded linear operators between two Banach spaces~$E$ and~$F$. It is endowed with the operator norm. 

\smallskip
\noindent
$L^p(J,E)$ is the space of Borel-measurable functions~$f$ on an interval $J\subset\R$ with range in a separable Banach space~$E$ such that 
$$
\|f\|_{L^p(J,E)}=\biggl(\int_J \|f(t)\|_E^p\dd t\biggr)^{1/p}<\infty,
$$
with an obvious modification for $p=\infty$. 

\smallskip
\noindent
$C_b(X)$ is the space of bounded continuous functions $f:X\to\R$ endowed with the norm $\|f\|_\infty=\sup_X|f|$. 

\smallskip
\noindent
$L_b(X)$ is the space of functions $f\in C_b(X)$ such that
$$
\|f\|_L:=\|f\|_\infty+\sup_{0<d_X(u,v)\le 1}\frac{|f(u)-f(v)|}{d_X(u,v)}<\infty.
$$

\smallskip
\noindent
$\PP(X)$ denotes the set of  Borel probability measures on $X$. For any $\mu\in \PP(X)$ and $\mu$-integrable function $f:X\to\R$, we set
$$
\lag f,\mu\rag=\int_Xf(u)\,\mu(\dd u). 
$$
The {\it total variation metric\/} on $\PP(X)$ is defined by
\begin{equation} \label{TVN}
	\|\mu_1-\mu_2\|_{\mathrm{var}}:=\sup_{\Gamma\in\BB(X)}|\mu_1(\Gamma)-\mu_2(\Gamma)|
=\frac12\sup_{\|f\|_\infty\le1}
\left|\lag f,\mu_1\rag-\lag f,\mu_2\rag\right|.
\end{equation}
We shall also use the  {\it dual-Lipschitz\/}  metric
$$
\|\mu_1-\mu_2\|_L^*
:=\sup_{\|f\|_L\le1}\left|\lag f,\mu_1\rag-\lag f,\mu_2\rag\right|.
$$
Note that $\|\mu_1-\mu_2\|_L^*\le 2\|\mu_1-\mu_2\|_{\mathrm{var}}$. 

\smallskip
\noindent
We denote by~$C$, $C_1$, etc.\ unessential positive constants. 

\smallskip
\noindent
If $B_1$ and~$B_2$ are {\it real\/} Banach spaces and $\OO\subset B_1$ is an open domain, then analyticity of a map $F:\OO\to B_2$ is understood in the sense of Fr\'echet. In addition, for an analytic map~$F$, we always assume that 
\begin{equation} \label{analytic}
\mbox{\it the norms of all the derivatives $D^kF$ are bounded on bounded subsets of~$\OO$}. 	
\end{equation}
Moreover, if~$F$ depends on a parameter~$u$ varying in a compact metric space~$X$, then we assume that {\it all the derivatives~$D_\eta^kF(u,\eta)$ are bounded on bounded subsets, uniformly in~$u\in X$, and are continuous functions of~$(u,\eta)$}.

\section{Mixing for Markovian random dynamical systems}
\label{s1} 
\subsection{Setting of the problem}
\label{s1.1}
Let~$H$ and~$E$ be separable Hilbert spaces and let $S:H\times E\to H$ be a continuous mapping. We consider the random dynamical system (RDS) given by 
\begin{equation} \label{1.1}
u_k=S(u_{k-1},\eta_k), \quad k\ge1,
\end{equation}
where $\{\eta_k\}$ is a sequence of i.i.d.\ random variables in~$E$. Let us denote by~$\ell$ the law of~$\eta_k$ and assume that it has a compact support~$\KK\subset E$. Suppose there is a compact set $X\subset H$ such that $S(X\times\KK)\subset X$, so that one can consider the restriction of the RDS~\eqref{1.1} to~$X$. The hypotheses imposed on~$\eta_k$ imply that the trajectories of~\eqref{1.1} form a discrete-time Markov process in~$X$; we shall denote it by~$(u_k,\IP_u)$, where~$\IP_u$ is the probability measure corresponding to the trajectories issued from~$u$ (e.g., see Section~2.5.B in~\cite{KS1991} or  Section~1.3.1 in~\cite{KS-book}). We write  $P_k(u,\Gamma)$ for the transition function and denote by~$\PPPP_k:C_b(X)\to C_b(X)$ and $\PPPP_k^*:\PP(X)\to\PP(X)$ the corresponding Markov operators. Recall that  a measure $\mu \in \PP(H)$ is  said to be {\it stationary\/} for $(u_k,\IP_u)$ if~$\PPPP^*_1\mu=\mu$. Since~$X$ is compact, the Bogolyubov--Krylov argument implies that there is at least one stationary measure. Our goal is to study its uniqueness and long-time stability under the dynamics. In what follows, we assume that the four hypotheses below are satisfied.

\smallskip
\begin{description}
\item[\hypertarget{H1}{(H$_1$)} Regularity.] 
{\sl There is a Banach space~$V$ compactly embedded into~$H$ such that the image of~$S$ is contained in~$V$, the mapping $S:H\times E\to V$ is twice continuously differentiable, and its derivatives up to the second order are bounded on bounded subsets. Moreover, for any $u\in H$, the mapping $\eta\mapsto S(u,\eta)$ is analytic from~$E$ to~$H$, and the derivatives $(D_\eta^jS)(u,\eta)$ are continuous functions of~$(u,\eta)$ that are bounded on bounded subsets of~$H\times E$; cf.~\eqref{analytic}.}

\item[\hypertarget{H2}{(H$_2$)} Dissipativity.] 
{\sl There is a number $a\in(0,1)$ and vectors $\hat \eta\in\KK$ and $\hat u\in X$  such that 
\begin{equation} \label{1.2}
\|S(u,\hat \eta)-\hat u\|\le a\|u-\hat u\|\quad\mbox{for any $u\in X$}. 
\end{equation}}
\end{description}

To formulate the third hypothesis, for any point $u\in X$, we denote by~$\KK^u$ the set of those $\eta\in\KK$ for which the image of $(D_\eta S)(u,\eta):E\to H$ is dense in~$H$. It is easy to see that~$\KK^u$ is a section of a Borel subset in the product space~$X\times E$ and, hence, is a Borel subset of~$E$.\footnote{To see this, consider a measurable space $(Y,\YY)$ and a measurable map $A:Y\to\LL(E,H)$ and denote by~$\GG_A$  the set of points $y\in Y$ for which the image of~$A(y)$ is dense in~$H$. Choosing countable dense subsets~$\{f_i\}\subset E$ and~$\{h_j\}\subset H$, it is easy to see that $\GG_A=\bigl\{y\in Y:\inf_{i\ge1}\|A(y)f_i-h_j\|_H=0\mbox{ for any }j\ge1\bigr\}$. In the case under study, we have $Y=H\times E$, $A=(D_\eta S)(u,\eta)$, and $\KK^u=\KK\cap \GG^u$, where $\GG^u=\{\eta\in E:(u,\eta)\in \GG_A\}$.} 

\begin{description}
\item[\hypertarget{H3}{(H$_3$)} Approximate controllability of linearisation.] 
{\sl For any $u\in X$, we have $\ell(\KK^u)=1$.}

\item[\hypertarget{H4}{(H$_4$)} Decomposability of the noise.]
{\sl There is an orthonormal basis~$\{e_j\}$ in~$E$ such that 
\begin{equation} \label{1.3}
\eta_k=\sum_{j=1}^\infty b_j\xi_{jk}e_j,
\end{equation}
where $\xi_{jk}$ are independent random variables such that $|\xi_{jk}|\le1$ a.s. and~$b_j$ are non-zero numbers  satisfying 
\begin{equation} \label{1.4}
\sum_{j=1}^\infty b_j^2<\infty.
\end{equation}
Moreover, there are Lipschitz-continuous functions $\rho_j:\R\to\R$ such that 
\begin{equation} \label{1.5}
\DD(\xi_{jk})=\rho_j(r)\dd r\quad\mbox{for all $j\ge1$}.
\end{equation}}
\end{description}

Before turning to formulating our main result on the discrete-time random dynamical system~\eqref{1.1}  satisfying \hyperlink{H1}{(H$_1$)}--\hyperlink{H4}{(H$_4$)}, we describe how such systems arise in the analysis of nonlinear PDEs of parabolic type perturbed by a random force and explain why, in this situation,  Hypotheses~\hyperlink{H1}{(H$_1$)}, \hyperlink{H2}{(H$_2$)} and~\hyperlink{H4}{(H$_4$)} hold trivially, whereas~\hyperlink{H3}{(H$_3$)} imposes some restrictions on both the {\it nonlinearity\/} and the {\it random force\/}. Namely, let us consider the following equation in a functional space~$H$ which, as before, is assumed to be a separable Hilbert space: 
\begin{equation} \label{npde}
	\dot u=-Lu+F(u)+\eta(t), 
\end{equation}
where $L=L^*>0$ is an unbounded linear operator in~$H$, $F$ is an analytic nonlinearity, and~$\eta$ is a random force. In the examples we deal with, $L$ is a positive degree of~$-\Delta$, and we do not assume that the linear part of~$F$ at the origin (i.e., $(DF)(0)$) vanishes. Let~$\HH$ be a separable Hilbert space of finite or infinite dimension, continuously embedded into~$H$, and let $\{\varphi_i\}_{i\in\II}$ be an orthonormal basis in~$\HH$. We assume that the random force~$\eta$ has the form
$$
\eta(t)=\sum_{k=1}^\infty \I_{[k-1,k)}(t)\eta_k(t-k+1),
$$
where $\I_{[k-1,k)}$ is the indicator function of the interval $[k-1,k)$ and~$\{\eta_k\}$ is a sequence of i.i.d.\ random variables in $E:=L^2(J,\HH)$ with $J=[0,1]$ such that 
\begin{equation} \label{comp}
	\mbox{$\KK=\supp\DD(\eta_k)$ is a compact subset of~$E$};
\end{equation}
this implies, in particular, that the random variables~$\eta_k$ are uniformly bounded in~$E$. 

Denote by $S:H\times E\to H$ the mapping which takes a pair $(v,\eta|_{[0,1]})$ to~$u(1)$, where $u(t)$ is a solution of~\eqref{npde} satisfying $u(0)=v$. With this notation, a solution of~\eqref{npde} evaluated at integer times $t\ge0$ is a trajectory of~\eqref{1.1}. The existence of a compact set $X\subset H$ such that the mapping $v\mapsto S(v,\eta_1)$ preserves~$X$ a.s.\ often follows from the parabolic regularity and the boundedness of~$\eta$. 

Assumption~\hyperlink{H1}{(H$_1$)} is another usual consequence of the parabolic regularity, while~\hyperlink{H2}{(H$_2$)} holds, for example, if the origin of~$H$ is an exponentially stable equilibrium for Eq.~\eqref{npde} with $\eta\equiv0$, and  $0\in\KK$. (This condition can be relaxed, requiring only the existence of a point that is in the set of accessibility for an arbitrary initial state; see~\cite{KNS-2019} for details.) Hypothesis~\hyperlink{H4}{(H$_4$)} is not restrictive either. In particular, it holds if $\{\eta_k(\cdot)\}$ are independent realisations of a process~$\eta(\cdot)$ defined by the relations 
$$
	\eta(t)=\sum_{i\in\II}\eta^i(t)\varphi_i, \quad 
	\eta^i (t)=\sum_{l=1}^\infty b_{il}\xi_l^i\psi_l^i(t),
$$
where~$\{\psi_l^i\}_{l\ge1}$ are orthonormal bases in~$L^2(J)$ for each~$i$, $\{\xi_l^i\}$ are independent random variables with Lipschitz densities against the Lebesgue measure such that  $|\xi_l^i|\le 1$ almost surely, and $\{b_{il}\}$ decay to zero sufficiently fast as $i,l\to\infty$ (the latter property implies~\eqref{comp}); see Section~\ref{s6.2} for specific examples. 

Finally, Hypothesis~\hyperlink{H3}{(H$_3$)}~means certain non-degeneracy of the function~$F$, and it holds if the latter satisfies a H\"ormander-type condition and if~$\eta$ is observable in a suitable sense; see Sections~\ref{s4} and~\ref{s6} for more details. As we show in the proof, these properties, together with some analyticity argument, imply that, with high probability, the trajectories of~\eqref{npde} (and those of~\eqref{1.1}) can be locally stabilised by a finite-dimensional modification of the driving force. 

In conclusion of this subsection, let us also mention that the controllability of the linearised operator (or, equivalently, the existence of its right inverse) is well known to be important when studying mixing properties for random dynamical systems. In particular, it arises in the Malliavin calculus and plays an important role when proving the absolute continuity of the laws of solutions of SDE with respect to the Lebesgue measure; see Chapter~2 in~\cite{nualart2006}. The approximate controllability of the linearised equation was used by Hairer and Mattingly~\cite{HM-2006,HM-2011} in their proof of exponential mixing of the 2D Navier--Stokes system perturbed by a degenerate noise, white in time and finite-dimensional in~$x$.

\subsection{Main result and examples}
\label{s1.2}

Let us denote by~$\|\cdot\|_L^*$ the dual-Lipschitz metric in the space of probability measures on~$X$ (cf.\ Notation and conventions). The following theorem is the main result of this paper. 

\begin{theorem} \label{t1.1}
Suppose that Hypotheses {\rm\hyperlink{H1}{(H$_1$)}--\hyperlink{H4}{(H$_4$)}} are satisfied. Then the Markov process $(u_k,\IP_u)$ has a unique stationary measure $\mu\in \PP(X)$, and there are positive numbers~$C$ and~$\gamma$ such that
\begin{equation} \label{1.6}
\|\PPPP_k^*\lambda-\mu\|_L^*\le Ce^{-\gamma k}\quad\mbox{for all $k\ge0$ and $\lambda\in\PP(X)$}. 
\end{equation}
\end{theorem}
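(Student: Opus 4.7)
The plan is to follow the coupling approach for discrete-time Markov processes on a compact space, enhanced by a Newton--Kantorovich type iteration that compensates for the degenerate noise. The key step is to establish that for any two initial points $u, u' \in X$, one can construct a coupling $(u_k, u_k')$ of the processes issued from $u$ and $u'$ exhibiting contraction in probability; the conclusion~\eqref{1.6} then follows from a standard abstract criterion based on the dual-Lipschitz metric together with the Bogolyubov--Krylov existence of a stationary measure. The coupling is built in three overlapping phases: a dissipative driving phase, a coalescence phase using maximal coupling, and a linearised correction phase.

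In the first phase, I would use \hyperlink{H2}{(H$_2$)} and the fact that any neighbourhood of $\hat\eta$ has positive $\ell$-probability (by the decomposability~\eqref{1.3} together with the Lipschitz densities~\eqref{1.5}) to drive both trajectories into a small ball $B_H(\hat u, r)$ with positive probability after a deterministic number of steps. In the second phase, once $u_k$ and $u_k'$ are close, I would couple the two noises as follows: given the mismatch $\delta_k := u_k - u_k'$, I use approximate controllability \hyperlink{H3}{(H$_3$)} to find an element $\zeta \in E$ of norm $\le C\|\delta_k\|$ such that $(D_\eta S)(u_k, \eta_{k+1}) \zeta \approx \delta_k$, and replace $\eta_{k+1}$ by $\eta_{k+1} - \zeta$ for the $u_k'$-trajectory. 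By smoothness of $S$ in $\eta$ (Hypothesis \hyperlink{H1}{(H$_1$)}), the post-step mismatch is of order $\|\delta_k\|^2$ rather than $\|\delta_k\|$; this is the quadratic reduction that underlies the Newton--Kantorovich--Kolmogorov scheme. The cost of this substitution in total variation distance between the coupled and true laws is controlled by $\|\zeta\|_E$ via the Lipschitz condition on the densities $\rho_j$ in \hyperlink{H4}{(H$_4$)} (through a standard Girsanov-like estimate for shift-continuous product measures), hence linear in $\|\delta_k\|$.

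Iterating the correction, one obtains a geometric sequence of mismatches $\|\delta_k\| \lesssim \|\delta_0\|^{2^k}$ and a corresponding geometric sequence of coupling failure probabilities; since the quadratic convergence dominates the linear cost, both the cumulative failure probability and the limiting mismatch remain arbitrarily small once the starting mismatch $\|\delta_0\|$ is chosen small enough. Combined with the positive-probability entry into the small ball guaranteed by the dissipative phase, this yields a coupling that contracts exponentially in the dual-Lipschitz sense, from which~\eqref{1.6} follows by a Kantorovich--Rubinstein style argument; uniqueness of the stationary measure is an immediate consequence.

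The main obstacle is the construction and control of the linearised corrector~$\zeta$. The dense range property of $(D_\eta S)(u,\eta)$ provided by \hyperlink{H3}{(H$_3$)} is purely qualitative, so $\zeta$ is only guaranteed to exist with no norm bound; moreover, the image of $\zeta$ under the linearisation is only approximately $\delta_k$, and this approximation error must itself be controlled to match the quadratic tolerance. The natural cure is a Tikhonov-type regularisation of the linearised operator at scale depending on $\|\delta_k\|$, balanced against the Newton--Kantorovich rate so that the approximation error remains of order $\|\delta_k\|^{1+\alpha}$ for some $\alpha>0$ while $\|\zeta\|_E$ stays of order $\|\delta_k\|$. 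Verifying this balance uniformly in $u,\eta \in X \times \KK$, and combining it with a suitable measurable selection of $\zeta$ and a measurable version of the gluing lemma (as acknowledged in the paper), is the technical heart of the argument.
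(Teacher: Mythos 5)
Your overall architecture (coupling, homological equation solved via an approximate right inverse of $(D_\eta S)(u,\eta)$, total-variation cost of the noise modification, gluing lemma, dissipative phase using \hyperlink{H2}{(H$_2$)}) matches the paper's strategy, but the quantitative engine you propose contains a genuine gap. You want a Newton--Kantorovich iteration with quadratic decay $\|\delta_k\|\lesssim\|\delta_0\|^{2^k}$, obtained by choosing the Tikhonov regularisation scale as a function of $\|\delta_k\|$ so that the approximation error is $O(\|\delta_k\|^{1+\alpha})$ while the corrector satisfies $\|\zeta\|_E\lesssim\|\delta_k\|$. This balance cannot be verified from the hypotheses: \hyperlink{H3}{(H$_3$)} is purely qualitative, and the approximate right inverse furnished by Tikhonov regularisation (Theorem~\ref{p2.5}) comes, for each accuracy $\e$, with a norm bound $C_\e$ and a ``good set'' of noises of measure $\ge1-\sigma$ for which \emph{no rate} in $\e$ or $\sigma$ is available. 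If you let $\e\to0$ with $\|\delta_k\|$, then $\|\zeta\|_E\le C_\e\|\delta_k\|$ with $C_\e$ possibly blowing up arbitrarily fast, which destroys both the bound $\|\zeta\|_E\lesssim\|\delta_k\|$ and the quadratic remainder estimate $O(\|\zeta\|_E^2)=O(\|\delta_k\|^2)$, and also inflates the total-variation cost. The paper states this obstruction explicitly in Section~\ref{s1.3} and resolves it differently: $\e$ is \emph{frozen} (depending only on the target contraction factor $\theta=q$), and only then is the closeness threshold $d$ chosen small relative to $C_\e$; the result is a fixed per-step contraction factor $q\le\frac12$ --- exponential, not super-exponential convergence --- which suffices for~\eqref{1.6}.

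A second, related weakness is your treatment of failure events. You argue that the cumulative failure probability is small because the cost is ``linear in $\|\delta_k\|$,'' but there are two distinct failure sources: the maximal-coupling mismatch, whose probability is only H\"older, of order $\|u-u'\|^\beta$ with $\beta<1$ (Theorem~\ref{T:4.1}; the perturbation is $\eta$-dependent, finite-dimensional and only piecewise Lipschitz, so a Girsanov-type linear bound is not available), and, more importantly, the event that the noise falls in the bad set where no corrector exists, whose probability $\sigma$ is a fixed positive constant that does \emph{not} shrink with $\|\delta_k\|$. On that event the distance can grow by the Lipschitz factor $q^{-1}$, so a naive Borel--Cantelli/smallness-of-$\delta_0$ argument does not close. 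The paper absorbs these recurrent failures by running the coupling through a weighted Kantorovich functional: $X\times X$ is partitioned into level sets $\XXX_n$ by distance scale (and by norm when the points are far apart), the coupling of Theorem~\ref{t-coupling} is shown to move up one level with probability $\ge1-\nu$ and to drop two or more levels only with probability $\lesssim\|u-u'\|^\beta$, and a carefully chosen weight function $F$ makes $\PPPP_1^*$ a strict contraction for $\KK_F$, after which Theorem~\ref{expodecay} yields~\eqref{1.6}. Without this (or an equivalent) bookkeeping device, and without abandoning the $\delta$-dependent regularisation scale, your scheme as written does not go through.
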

The proof of this  result is based on an application of the Kantorovich functional method, described in Section~3.1.1 of~\cite{KS-book} and repeated here as Theorem~\ref{expodecay}.\footnote{In a different form, this method as a tool to prove the exponential mixing for nonlinear PDEs with stochastic forcing was suggested in~\cite{KPS-cmp2002}. In the final form (used in our work) the method was presented in~\cite{kuksin-ams2002}.} By that result, to prove the theorem, it suffices to check a contraction property in the space of probability measures on~$X$. This  requires subtle analysis based on ideas  from the optimal control, measure theory, and theory of analytic functions. A proof of Theorem~\ref{t1.1} is given in Section~\ref{s3}. Here we discuss briefly some examples to which our main result is applicable; the details will be given in Section~\ref{s4}.

\begin{example}[Navier--Stokes system]\label{example-NS}
Let $\T_a^2=\R^2/(2\pi a_1)\Z\oplus (2\pi a_2)\Z$ be a rectangular torus, where~$a=(a_1,a_2)$ is a vector with positive coordinates, and let
\begin{equation} \label{spaceH}
H=\biggl\{u\in L^2(\T_a^2,\R^2): \diver u=0
\mbox{ in $\T_a^2$}, \int_{\T_a^2} u(x)\,\dd x=0\biggr\}.
\end{equation}
We consider the Navier--Stokes system~\eqref{0.1} in~$\T_a^2$ perturbed by a random process. Applying the Leray projection~$\Pi:L^2(\T_a^2,\R^2)\to H$ to the equations, we reduce the system to the following nonlocal PDE:
\begin{equation} \label{1.7}
\p_tu+\nu Lu+B(u)=\eta(t,x).
\end{equation}
Here $\nu>0$ is the viscosity coefficient, $L=-\Pi\Delta$ is the Stokes operator, $B(u)=\Pi(\langle u,\nabla\rangle u)$, and~$\eta$ is a random process which is assumed to be of the form
\begin{equation} \label{1.08}
\eta(t,x)=\sum_{k=1}^\infty \I_{[k-1,k)}(t)\eta_k(t-k+1,x),
\end{equation}
where $\I_{[k-1,k)}$ is the indicator function of the interval $[k-1,k)$, and~$\{\eta_k\}$ is a sequence of i.i.d.\ random variables in $L^2(J,H)$ with $J=[0,1]$ whose law is decomposable in the sense of Hypothesis~\hyperlink{H4}{(H$_4$)}. We shall prove that if, in addition, the law of~$\eta_k$ is {\it observable\/} (see Section~\ref{s4.1}), then the Markov process obtained by restricting solutions of~\eqref{1.7} to integer times has a unique stationary distribution, which is exponentially mixing in the dual-Lipschitz metric. Examples of observable processes~$\eta_k$ are given in Section~\ref{s6}. 
\end{example}

\begin{example}[Ginzburg--Landau equation]\label{example-GL}
Let us fix a vector $a=(a_1,a_2,a_3)$ with positive coordinates and denote by~$\Z_a^3$ the integer lattice generated by the vectors~$a_k\iota_k$, where $\{\iota_1,\iota_2,\iota_3\}$ is the standard basis in~$\R^3$. We set $\T_a^3=\R^3/2\pi\Z_a^3$ and consider Eq.~\eqref{cgl-intro}, in which $u=u(t,x)$ is an unknown complex-valued function, $\nu$, $\gamma$, and~$c$ are positive parameters, and $m\in\{1,2\}$. Equation~\eqref{cgl-intro} is well posed in the Sobolev space $H^1(\T_a^3,\C)$. Assuming again that the random force~$\eta$ has the form~\eqref{1.08} and  satisfies the decomposability and observability hypotheses,  we shall prove uniqueness and exponential mixing of stationary measure for~\eqref{cgl-intro}. 
\end{example}

\begin{example}[3D stochastic primitive equations]\label{e-primitive}
Let us consider the following equations for a vector function $(u,p)=(u_1,u_2,u_3,p)$ in the horizontally periodic box $D=\T^2\times (0,1)$: 
\begin{equation} \label{primitive-equations}
\p_t v+\langle u, \nabla \rangle v-\nu \Delta v+\nabla p =\eta(t,x), \quad \diver u=0, 
\end{equation}
where $v=(u_1,u_2)$, the pressure $p$ is assumed to be independent of~$x_3$, and the buoyancy is taken to be zero for simplicity. This is a popular model in the theory of climate, and its mathematical  treatment attracted a lot of attention in the last two decades (e.g., see the paper~\cite{CT-2007} and the references therein). In the stochastic setting with a spatially regular white noise~$\eta$, the well-posedness of~\eqref{primitive-equations} and the existence of a stationary measure were established in~\cite{DGTZ-2012,GKVZ-2014}. On the other hand, the uniqueness of a stationary solution in that setting seems to be out of reach because of rather weak tail estimates for strong solutions (e.g., their moments in Sobolev spaces are not know to be bounded); cf.\ Theorems~1.7 and~1.9 in~\cite{GKVZ-2014}. In the case of a bounded observable noise, the problem of mixing can be treated with the help of Theorem~\ref{t1.1} and will be carried out in a subsequent  publication. Thus, even in the case of quadratic nonlinearities, the class of noises we deal with provides a new framework for proving the mixing behaviour of dynamics. 
\end{example}

\subsection{Coupling in infinite dimension and Newton--Kantoro\-vich--Kolmogorov fast convergence}
\label{s1.3}

In this subsection, we describe the general scheme of the proof of Theorem~\ref{t1.1}, based on a suitable coupling construction, and discuss its relation to the technique of fast convergence due to Newton--Kantorovich--Kolmogorov. To prove convergence~\eqref{1.6}, it suffices to verify that any two trajectories~$\{u_k\}$ and~$\{u'_k\}$ with random initial data~$u$ and~$u'$ converge in distribution  when $k\to\infty$. Often this property is proved  with the help of the coupling argument, originated in 1930's in the work of Doeblin~\cite{doeblin-1940} (see also~\cite{doeblin-2000}), and we recall now the main idea. 

\smallskip
Let us consider the following dynamics in the space $X\times X$:
\begin{align*}
(u_0,v_0)&=(u,u'),\\ 
(u_k,v_k)&= (S(u_{k-1},\eta_k),S(v_{k-1},\eta_k')), \quad k\ge1,	
\end{align*}
where $\{(\eta_k,\eta_k')\}$ is a sequence of independent random variables such that 
\begin{equation}
\label{d3}
\DD (\eta_k') = \DD(\eta_k)=\ell\quad \mbox{for all $k\ge1$}. 	
\end{equation}
It is clear that $\{u_k\}$ is a trajectory of~\eqref{1.1} starting from~$u$, while $\{v_k\}$ coincides with that issued from~$u'$ in the sense of law: $\DD(v_k) = \PPPP_k^* \DD(u')=\DD(u'_k)$. Our goal is to choose a sequence~$\{\eta_k'\}$ satisfying~\eqref{d3} such  that, with probability~$1$, $\{v_k\}$ is asymptotically close to $\{u_k\}$ as $k\to\infty$. To construct~$\{\eta_k'\}_{k\ge1}$, we fix a small parameter $d>0$ and distinguish between the following two cases:
\begin{itemize}
	\item[\bf\hypertarget{(a)}{(a)}] 
	Setting $\delta:= \| u_{k-1} - v_{k-1} \|$, we choose for $\eta'_k$ an independent copy of $\eta_k$ as long as $\delta >  d$. Due to the dissipativity, at some random time~$\tau$, we shall have $\|v_\tau-u_\tau\|\le d$. The Markov time~$\tau$ is no bigger than the first instance when both trajectories are in the $\frac{ d}{2}$-neighbourhood of~$\hat u$, and the latter can be controlled due to Hypothesis~\hyperlink{H2}{(H$_2$)}. 
	 
	\item[\bf\hypertarget{(b)}{(b)}] 
	The case $\delta= \| u_{k-1} - v_{k-1} \| \le d$ contains the main difficulty. To simplify notation, assume that $k=1$; so $u_0=u$ and $v_0=u'$. We seek~$\eta'_1$ in the form $\eta_1'=\varPsi^{u,u'}(\eta_1)$, where the transformation $\varPsi^{u,u'}$ satisfying 
\begin{equation}\label{d4}
\varPsi_*^{u,u'}(\ell) = \ell
\end{equation}
has to be constructed. Relation~\eqref{d4} implies~\eqref{d3} (with $k=1$), and the goal is to find $\varPsi^{u,u'}$ such that the inequality $\|u_1-v_1\| \ll \delta$ holds with high probability. \end{itemize}
In the best case, we may have $v_1=u_1$ almost surely, that is
\begin{equation}\label{d5}
S(u,\eta_1)- S(u',\eta_1')=0, \quad \text{a.s.}
\end{equation}
This is not likely to be possible because, for deterministic initial states~$u$ and~$u'$, it would imply that $\DD(u_1) =\DD(v_1)= \DD(u'_1)$, which is not necessarily the case. 

\medskip
The situation is reminiscent of that treated by Kolmogorov's celebrated  theorem on nearly-integrable Hamiltonians 
\begin{equation}\label{hamilt}
H_\delta (p,q) =h_1(p) + \delta f_1(p,q),\qquad (p,q) \in B^n \times \T^n,
\end{equation}
where $B^n\subset\R^n$ is a ball and $\delta>0$ is a small parameter. Since the (Hamiltonian) dynamics for a Hamiltonian depending only on~$p$ is integrable, a naive idea to study the one corresponding to~$H_\delta$ is to find a symplectic transformation~$S_\delta$ such that
\begin{equation}\label{int}
H_\delta\circ S_\delta(p,q)=h_\delta(p).
\end{equation}
However, it was shown by Poincar\'e that, in general, such a transformation~$S_\delta$ does not exist. Kolmogorov's well-known idea to bypass this obstruction is to achieve~\eqref{int} only up to a higher order term:
\begin{equation}\label{int1}
H_\delta\circ S_1(p,q) =h_2(p)  + \delta^2 f_2(p,q).
\end{equation}
This relation holds if the symplectic transformation $S_1$ is a time-1 flow of a Hamiltonian $\delta  g_1(p,q)$, with some function~$g_1$ satisfying the linear homological equation 
\begin{equation}\label{he}
\{ h_1(p), g_1(p,g)\} =f_1(p,q) -\langle f_1\rangle (p),
\end{equation}
where $\langle\,\cdot\,\rangle$ stands for the averaging in $q\in \T^n$.  If $h_1$ meets a mild non-degeneracy condition, then~\eqref{he} can be solved with a disparity of order~$\delta$, for all~$q$ and for~$p$ outside a small-measure set~$B_1\subset B^n$. The corresponding transformation~$S_1$ reduces~$H_\delta$ to a Hamiltonian~\eqref{int1} which is much closer to being integrable than~$H_\delta$. Iterating this argument for $j=1,2,\dots$, Kolmogorov constructed symplectomorphisms $\{S_1\circ\dots\circ S_j\}_{j\ge1}$ transforming~$H_\delta$ to a  Hamiltonian~$H_{\delta,j}$ that is $\delta^{2^j}$-close to being integrable, for all~$q$ and for~$p$ outside a set $B_1\cup\dots \cup B_j\subset B$. When $j\to\infty$, the transformations $S_1\circ\dots\circ S_j$ converge, super-exponentially fast, to a limiting transformation~$S_\delta$ which satisfies~\eqref{int} for all~$p$ outside the  set~$\cup_jB_j$, which turns out to be  of a small  measure. This implies the assertions made by Kolmogorov in  his seminal paper~\cite{kolmogorov-1954}. (That paper contained only a scheme of the argument, and a complete proof appeared only 10 years later in the works by Arnold and Moser, resulting in creation of the KAM theory; see~\cite{arnold-1963,moser-1966-1}.)

\medskip
Going back to our problem, we write $u'=u+\delta v$, where $\|v\|=1$, and seek a transformation of the form $\varPsi^{u,u'}=\Id{}+\delta \varPhi$. Similar to Kolmogorov's approach, let us rewrite relation~\eqref{d5} as
$$
\delta \bigl(D_\eta S(u,\eta_1)\varPhi(\eta_1) - S'(u,u',\eta_1)\bigr) + O(\delta^2)=0, 
$$
where $S'=\delta^{-1}(S(u+\delta v,\eta_1)-S(u,\eta_1))$ is of order~1. 
Neglecting the term~$O(\delta^2)$, consider the equation 
 \begin{equation}\label{d6}
D_\eta S(u,\eta_1)\varPhi(\eta_1)=S'(u,u',\eta_1).
\end{equation}
This is the homological equation of our proof, analogous to Eq.~\eqref{he}  from the KAM theory. Equation~\eqref{d6}, as well as \eqref{he}, cannot be solved exactly or approximatively for a.a.~$\eta_1$. However, in view of Hypothesis~\hyperlink{H3}{(H$_3$)}, it can be solved approximately for $\eta_1\notin \NN^u$, where~$\NN^u$ is a suitable ``bad'' set of small $\ell$-measure in the support~$\KK$ of~$\ell$. It is proved in Section~\ref{s2.3} that, for any $\e>0$, an approximate solution $\varPhi (\eta_1)$ solving \eqref{d6}  up to a term of order~$\e$ can be found in the form 
$$
\varPhi(\eta_1)= R_\e(u,\eta_1)S', 
$$
where $R_\e(u, \eta_1)$ is a finite-dimensional linear operator satisfying the inequality
\begin{equation}\label{d7}
\| R_\e(u,\eta_1) \| \le C(\e, \ell(\NN^u)) <\infty
\end{equation}
with some function $C(\e,r)$ going to~$+\infty$ as $\e\to0$ or $r\to0$. Setting 
$$
\varPsi^{u,u'}(\eta_1) 
= \eta_1 +\delta\varPhi(\eta_1)
= \eta_1 +\delta R_\e(u,\eta_1)S'(u,u',\eta_1),
$$ 
we make the left-hand side of~\eqref{d5} of order 
\begin{equation}\label{d8}
\delta_1=O(\delta\e)+\delta^2 C(\e,\ell(\NN^u))^2
\quad \text{for}\quad \eta_1\notin \NN^u.
\end{equation}
In the KAM theory, the usual strategy is to choose $\e=\delta^{\gamma_1}$ with some $\gamma_1>0$. Then, if we knew that $\delta^2 C(\delta^{\gamma_1},\ell(\NN^u))^2\le\delta^{1+\gamma_2}$ with some $\gamma_2>0$, this would lead to a super-exponential convergence $\| v_k-u_k\|\to0$, typical for the theory. However, such a choice is now impossible, since the constant $C(\e,\ell(\NN^u))$ is practically out of control. Instead, we derive from~\eqref{d8} that 
$$
\delta_1\le C_1\delta\bigl(\e+ dC(\e,\ell(\NN^u))^2\bigr)
$$
and choose $\e=(4C_1)^{-1}$ and $ d=\bigl(4C_1 C(\e,\ell(\NN^u))^2\bigr)^{-1}$. This implies that $\delta_1\le\frac{1}{2}\delta$ and leads to an exponential convergence $\| v_k-u_k\|\to0$, which is sufficient for our purposes. 

\smallskip
Construction of vectors $\{\eta_k'\}_{k\ge1}$ encounters two\footnote{The first  difficulty does not have an analogue in the KAM theory, whereas the second is usually present and manifests itself in the fact that the homological equation~\eqref{he} cannot be solved for all actions $p\in B$.} difficulties, which we describe for the first step:
\begin{itemize}
\item[\bf\hypertarget{(1)}{(1)}]
The transformation $\eta_1\mapsto \varPsi^{u,u'}(\eta_1)$ does not preserve the measure~$\ell$, so~\eqref{d4} does not necessarily hold.
\item[\bf\hypertarget{(2)}{(2)}]	
The transformation $\varPsi^{u,u'}(\eta_1)$ is not defined for $\eta_1\in  \NN^u$.
\end{itemize}

The first difficulty is overcome due to the observation that, in our situation, the distance between the measures~$\ell$ and $\varPsi_*^{u,u'}\ell$ is of order $\bigl(\delta\|R_\e(u,\eta_1)\|\bigr)^\varkappa$ with some $\varkappa>0$, which is small by~\eqref{d7}. Thus, even though the laws of~$v_1$ and~$u_1'$ are not the same, the two are close, which allows one  to bound the distance  between the laws of~$u_1$ and~$u'_1$ by the triangle inequality, provided that~$v_1$ is close to~$u_1$. In reality, the corresponding argument is a bit more complicated and evokes the gluing lemma; see  Section~\ref{s5.4}.

To handle the second difficulty, we extend the definition of~$\varPsi^{u,u'}$ to~$\NN^u$ as follows:
\begin{itemize}
	\item[\bf\hypertarget{(c)}{(c)}] If $\eta_1\in \NN^u$, then $\varPsi^{u,u'}(\eta_1)=\eta_1$. 
\end{itemize}
Since the mapping~$S$ is Lipschitz on the compact set~$X\times\KK$, for $\eta_1\in\NN^u$ we have $\|u_1-v_1\| \le C\delta$. We then iterate Steps~\hyperlink{(a)}{(a)}--\hyperlink{(c)}{(c)} depending on the value of the difference $\|u_k-v_k\|$.

The growth of the constant $C(\e,r)$ as $r\to0$ is difficult to control, and we cannot make the $\ell$-measure set~$\NN^u$ very small. As a consequence, case~\hyperlink{(c)}{(c)} happens rather often. This slows down the fast convergence, usually associated with the quadratic scheme. However, combining this construction with some techniques based on the study of the behaviour of Kantorovich functionals on a pair of trajectories enables one to prove that the Markov operator defines a contraction on the space of measures. This proves Theorem~\ref{t1.1}. 

\smallskip
To summarise, the proof of Theorem~\ref{t1.1} is based on the classical coupling scheme, enhanced with the quadratic convergence \`a la Kolmogorov to cope with difficulties~\hyperlink{(1)}{(1)} and~\hyperlink{(2)}{(2)} described above. The realisation of this scheme meets serious analytic difficulties. Their detailed discussion is presented in Section~\ref{s3}, which is devoted to the proof of Theorem~\ref{t1.1}. In conclusion, let us mention that these ideas apply also to the case in which the dissipativity hypothesis~\hyperlink{H2}{(H$_2$)} is replaced by a weaker condition of global approximate controllability to a point; see~\cite{KNS-2019}. 

\section{Proof of the main result}
\label{s3} 

\subsection{General scheme}
\label{ss-general-scheme}
Let us describe the scheme of the proof of Theorem~\ref{t1.1}. It is based on a sufficient condition for exponential mixing from~\cite[Section~3.1.1]{KS-book}, stated below as Theorem~\ref{expodecay}. Namely, we shall define a majorant of the metric on the space~$\PP(X)$ for which the Markov operator~$\PPPP_1^*$ is a contraction. The construction of that majorant uses an auxiliary result about a suitable coupling for the pair of measures $(P_1(u,\cdot),P_1(u',\cdot))$; see Theorem~\ref{t-coupling}. The latter reduces, with the help of a proposition about transformation of measures under smooth maps, to an approximate solution of a homological equation. The construction of solution is based on the existence of an approximate right inverse for a family of operators depending on a parameter that act between two Hilbert spaces and have a dense image, for almost all values of the parameter. Before turning to the accurate proof, we present the above scheme in more details. A reader not interested in an informal discussion may safely skip the text coming after Theorem~\ref{expodecay} and go to Subsection~\ref{s3.1}.

\subsubsection*{Sufficient condition for exponential mixing}
For reader's convenience, we formulate the result we need in an abstract setting. Let~$X$ be a compact metric space and let $(u_k,\IP_u)$ be a discrete-time Markov process in~$X$ possessing the Feller property. We denote by $P_k(u,\Gamma)$ the corresponding transition function, and by~$\PPPP_k$ and~$\PPPP_k^*$ the Markov operators. Let us consider a bounded Borel-measurable symmetric function $F:X\times X\to\R_+$ such that 
\begin{equation} \label{lowerbound}
	F(u_1,u_2)\ge c\,d_X(u_1,u_2)^\beta\quad\mbox{for any $u_1,u_2\in X$},
\end{equation}
where $\beta\le 1$ and~$c$ are positive numbers not depending on~$u_1$ and~$u_2$. Recall that the {\it Kantorovich functional\/} associated with~$F$ is defined by 
\begin{equation} \label{Kantorovich}
	\KK_F(\mu_1,\mu_2)=\inf_{\xi_1,\xi_2}\E\,F(\xi_1,\xi_2),
\end{equation}
where the infimum is taken over all $X$-valued random variables $\xi_1,\xi_2$ such that $\DD(\xi_1)=\mu_1$ and  $\DD(\xi_2)=\mu_2$. It follows from inequality~\eqref{lowerbound} and the definition of the dual-Lipschitz distance that 
\begin{equation} \label{dL-Kantorovich}
	\|\mu_1-\mu_2\|_L^*\le C\KK_F(\mu_1,\mu_2)
	\quad\mbox{for any $\mu_1,\mu_2\in \PP(X)$},
\end{equation}
where $C=c^{-1}\diam(X)^{1-\beta}$. 
A proof of the following theorem\footnote{In~\cite{KS-book}, the result is proved with $\beta=1$. However, using~\eqref{dL-Kantorovich}, it is straightforward to check that the proof remains valid for any $\beta\in(0,1)$.} can be found in Section~3.1.1 of~\cite{KS-book}. 

\begin{theorem} \label{expodecay}
	Suppose there is  a number $\varkappa\in (0,1)$ and a bounded Borel-measurable symmetric function~$F:X\times X\to\R_+$ satisfying~\eqref{lowerbound} such that 
	\begin{equation} \label{squeezing}
		\KK_F(\PPPP_1^*\mu_1,\PPPP_1^*\mu_2)\le \varkappa\,\KK_F(\mu_1,\mu_2)
		\quad\mbox{for any $\mu_1,\mu_2\in \PP(X)$}.
	\end{equation}
	Then the Markov process $(u_k,\IP_u)$ has a unique stationary measure $\mu\in\PP(X)$, which satisfies inequality~\eqref{1.6} with some positive numbers~$C$ and~$\gamma$. 
\end{theorem}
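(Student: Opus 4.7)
The plan is to deduce everything from the hypothesis (squeezing) by a straightforward iteration, and then convert the resulting contraction in the Kantorovich functional into the desired exponential decay in the dual-Lipschitz metric via (dL-Kantorovich). First, I would iterate the contraction: applying (squeezing) to the pair $(\PPPP_{k-1}^*\mu_1,\PPPP_{k-1}^*\mu_2)$ and proceeding inductively in $k$, one obtains
\begin{equation*}
\KK_F(\PPPP_k^*\mu_1,\PPPP_k^*\mu_2)\le\varkappa^k\KK_F(\mu_1,\mu_2)\qquad\text{for all }\mu_1,\mu_2\in\PP(X),\ k\ge0.
\end{equation*}
Since $F$ is bounded, say by $M:=\sup_{X\times X}F$, any coupling of $\mu_1,\mu_2$ yields $\KK_F(\mu_1,\mu_2)\le M$, and therefore $\KK_F(\PPPP_k^*\mu_1,\PPPP_k^*\mu_2)\le M\varkappa^k$. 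Combining with (dL-Kantorovich) gives
\begin{equation*}
\|\PPPP_k^*\mu_1-\PPPP_k^*\mu_2\|_L^*\le CM\varkappa^k\qquad\text{for all }\mu_1,\mu_2\in\PP(X),\ k\ge0.\tag{$\ast$}
\end{equation*}

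From $(\ast)$ the three claims follow essentially for free. For existence, compactness of $X$ together with the Feller property supplies, by the classical Bogolyubov--Krylov argument, at least one stationary measure $\mu\in\PP(X)$. For uniqueness, if $\mu_1,\mu_2$ are both stationary then $\PPPP_k^*\mu_i=\mu_i$, and $(\ast)$ implies $\|\mu_1-\mu_2\|_L^*\le CM\varkappa^k$ for every $k$; letting $k\to\infty$ forces $\mu_1=\mu_2$. For the exponential mixing, given any $\lambda\in\PP(X)$, applying $(\ast)$ with $\mu_2=\mu$ and using stationarity of $\mu$ yields
\begin{equation*}
\|\PPPP_k^*\lambda-\mu\|_L^*=\|\PPPP_k^*\lambda-\PPPP_k^*\mu\|_L^*\le CM\varkappa^k,
\end{equation*}
which is (1.6) with $\gamma=-\log\varkappa>0$ and an appropriate constant.

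Honestly, once (squeezing) is granted there is no substantive obstacle; the argument is a one-shot iteration. The only points needing a moment of care are (i) that the infimum defining $\KK_F$ is finite, which is automatic from the boundedness of $F$ since any product measure on $X\times X$ furnishes an admissible coupling, and (ii) that the constant $C=c^{-1}\diam(X)^{1-\beta}$ in (dL-Kantorovich) is finite, which requires only that $X$ have finite diameter (guaranteed here by compactness). The real content of the theorem lies in verifying hypothesis (squeezing) in the first place, and this is where all the difficulty of the paper is concentrated: constructing, for each pair $(\mu_1,\mu_2)$, a coupling of $\PPPP_1^*\mu_1$ and $\PPPP_1^*\mu_2$ whose expected value of $F$ strictly improves on $\KK_F(\mu_1,\mu_2)$ by a definite factor. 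Theorem~\ref{expodecay} itself is, in effect, merely the bookkeeping that shows such a one-step gain is enough.
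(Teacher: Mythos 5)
Your argument is correct and is essentially the same as the paper's (which does not reproduce a proof but refers to Section~3.1.1 of~\cite{KS-book}): iterate~\eqref{squeezing} using the semigroup property and boundedness of~$F$, pass to the dual-Lipschitz metric via~\eqref{dL-Kantorovich} --- exactly the point of the paper's footnote about extending from $\beta=1$ to $\beta<1$ --- and obtain existence from Bogolyubov--Krylov, uniqueness and mixing from the resulting exponential contraction. Nothing further is needed.
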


Thus, to prove Theorem~\ref{t1.1}, it suffices to construct a function~$F$ satisfying the hypotheses of the above result. 

\subsubsection*{Reduction to a coupling}

To construct the function~$F$, we fix a small parameter $d>0$ as in Section~\ref{s1.3} (see there Steps~\hyperlink{(a)}{(a)} and~\hyperlink{(b)}{(b)}) and divide the product space $\XXX = X\times X$ into disjoint subsets $\XXX_n$, $-N\le n\le +\infty$, where $N \sim -\ln d$. Roughly speaking, for $n\ge0$ the set $ \XXX_n$ consists of those pairs $(u,u') \in  \XXX$ for which $\| u-u'\|\sim  q^n d$, where~$q^{-1}$ is the maximum between~2 and the Lipschitz constant of~$S$ on~$X$, while for $k<0$, $ \XXX_k$ is the collection of the pairs  $(u,u') \in  \XXX$ for which $\| u-u'\| >d$ and $\| u\| \vee \| u'\|\sim R b^{N+k}$ for some suitable $R>0$ and $b\in (0,1)$. Then, for $(u,u') \in  \XXX_n$, $n\ge 0$, we should argue as at Step~\hyperlink{(a)}{(a)} of Section~\ref{s1.3}, while for $(u, u')\in  \XXX_k$ with $-N\le k<0$ the argument follows  Step~\hyperlink{(b)}{(b)}. We define the  function $F$ to be constant on the sets $ \XXX_n$: if $n\ge0$, then $F\le 1$ is $\sim q^{\gamma n/2}$ for some $\gamma \in(0,1)$, while if $n<0$, then $F=M_n$ for a suitably chosen decreasing sequence $\{ M_k, -N\le k\le-1\}$. Next, to prove~\eqref{squeezing} for any pair of measures $(P_1(u,\cdot),P_1(u',\cdot))$ with $(u,u')\in  \XXX_n$, we construct a coupling $(V(u,u'), V'(u,u'))$ such that, no matter if $n\ge0$ or $n<0$, we have 
\begin{equation} \label{coupling-inequalities}
\IP\biggl\{(V,V')\in\bigcup_{m\le n-2}\XXX_m\biggr\}\lesssim\|u-u'\|^{\gamma}, \quad 
\IP\biggl\{(V,V')\in\bigcup_{m\ge n+1}\XXX_m\biggr\}\ge 1-\nu,	
\end{equation}
where $\nu\in(0,1)$ is sufficiently small. In other words, with a probability close to~$1$ the distance between the trajectories issued from~$u$ and~$u'$ squeezes by a factor of~$q$ at time $t=1$, whereas the probability that distance increases by a factor of at least~$q^{-2}$ is estimated by~$\|u-u'\|^{\gamma}$. Once these two inequalities are established, the validity of~\eqref{squeezing} follows from a simple computation; see Case~1 in Section~\ref{s3.1}. 

The actual construction is more complicated since we need to take into account the points $(u,u')$ that are far from each other (so that the first inequality in~\eqref{coupling-inequalities} does not give any information). The proof of~\eqref{squeezing}, based on the existence of a coupling as above, is presented in Section~\ref{s3.1}. 

\subsubsection*{Reduction to a homological equation}
We now describe how to construct a coupling $(V,V')$ satisfying~\eqref{coupling-inequalities}, for any $(u,u') \in  \XXX$. We know that $(u,u') \in  \XXX_n$ for some  $n\ge -N$. If $n\le -1$ or $n =\infty$, then $\| u-u'\|>d$ or $u=u'$, and we choose $V= S(u, \zeta)$, $V'= S(u', \zeta)$, where $\DD(\zeta) = \ell$. If $n\ge0$, then $\|u-u'\|\le d$, and to construct the coupling we use a general result about the transformation of probability measure under smooth finite-dimensional perturbations of the identical map.  Roughly speaking, it says that if~$\ell\in\PP(E)$ is a decomposable measure with compact support, and $\varPsi:E\to E$ is a smooth map of the form $\varPsi(\zeta)=\zeta+\varPhi(\zeta)$, where~$\varPhi$ has a finite-dimensional range and satisfies the inequalities
\begin{equation} \label{estimates-for-Phi}
	\|\varPhi(\zeta)\|\le\varkappa, \quad 
	\|\varPhi(\zeta)-\varPhi(\zeta')\|\le\varkappa\|\zeta-\zeta'\|\quad\mbox{for $\ell$-a.e.\ $\zeta,\zeta'\in E$}, 
\end{equation}
then the total variation distance between~$\ell$ and~$\varPsi_*(\ell)$ can be estimated by~$C\varkappa^\beta$, with some $\beta\in(0,1)$. (We refer the reader to Theorem~\ref{T:4.1} for an exact statement.) Suppose now we have constructed finite-dimensional maps $\varPhi^{u,u'}:E\to E$ that are perturbations of identity (in the sense that~\eqref{estimates-for-Phi} holds for them with $\varkappa\simeq\|u-u'\|$) and satisfy the inequality 
\begin{equation} \label{squeezing-Phiuueta}
	\|S(u,\eta)-S(u',\eta+\varPhi^{u,u'}(\eta))\|\le q\|u-u'\| 
\end{equation}  
for $u,u'\in \XXX$ and $\ell$-a.e.~$\eta\in E$. Then the application of the above result to $\varPsi^{u,u'}=\Id+\varPhi^{u,u'}$ enables one to conclude that 
$$
\|\ell-\varPsi_*^{u,u}(\ell)\|_{\mathrm{var}}\lesssim\|u-u'\|^\beta. 
$$
Using now the gluing lemma (see Theorem~\ref{t-gluing}), we can construct a coupling $(V,V')$ satisfying~\eqref{coupling-inequalities}. 

We thus need to construct a small smooth map $\varPhi^{u,u'}$ satisfying~\eqref{squeezing-Phiuueta}. To this end, we argue as at Step~\hyperlink{(b)}{(b)} in Section~\ref{s1.3}. Namely, setting  $\zeta=\varPhi^{u,u'}(\eta)$, using Taylor's expansion, and ignoring the second-order terms, we can rewrite~\eqref{squeezing-Phiuueta} as 
\begin{equation} \label{squeezing-inequality}
	\|(D_uS)(u,\eta)(u'-u)+(D_\eta S)(u,\eta)\zeta\|\lesssim q\|u-u'\|. 
\end{equation}
If the map~$D_\eta S(u,\eta)$ had a full image, we could annihilate the left-hand side of this inequality. However, this is not the case, and we arrive at the problem of solving (approximately) the {\it homological equation\/} (cf.~\eqref{d6})
\begin{equation} \label{homological-equation}
(D_\eta S)(u,\eta)\zeta=-(D_uS)(u,\eta)(u'-u). 	
\end{equation}

\subsubsection*{Solving the homological equation}
It is exactly here where Hypothesis~\hyperlink{H3}{(H$_3$)} comes into play. Namely, we know that the image of $(D_\eta S)(u,\eta)$ is dense in~$E$ for any $u\in X$ and $\ell$-a.e.~$\eta\in E$. Hence, when $u$ is fixed, we can find an approximate solution of~\eqref{homological-equation} for almost every~$\eta$. However, this is not sufficient, since the resulting function $\zeta=\varPhi^{u,u'}(\eta)$ must satisfy a number of properties. More precisely, $\varPhi^{u,u'}$ should have a finite-dimensional range, be a function of order $\|u-u'\|$, and solve approximately~\eqref{homological-equation} for all close pairs $(u,u')\in\XXX$ and $\ell$-almost all $\eta\in E$. This will be achieved with the help of a construction of approximate right inverses for a family of linear operators with dense images. The corresponding result is given in Theorem~\ref{p2.5}, which is one of the main technical novelties of this paper. 

\subsection{Reduction to a coupling}
\label{s3.1}
In this subsection, we construct a symmetric function $F:X\times X\to\R_+$ satisfying~\eqref{lowerbound} and prove that~\eqref{squeezing} holds for it. Replacing~$S$ by the mapping $\widetilde S(u,\eta)=S(u+\hat u,\eta+\hat \eta)-\hat u$, we may assume without loss of generality that  $\hat u=0$ and $\hat\eta=0$.  

Our construction will depend on four parameters $q,b\in(0,1)$ and $R,d >0$, three of which are fixed now. Namely, let $b=\frac{a+1}{2}$, where $a\in(0,1)$ is the number in~\eqref{1.2},  let $R>0$ be such that $X\subset B_H(R)$, and let $q\in(0,\frac12]$ satisfy the inequality
\begin{equation} \label{lipschitz}
	\|S(u_1,\zeta)-S(u_2,\zeta)\|\le q^{-1}\|u_1-u_2\|
	\quad\mbox{for any $u_1,u_2\in X$, $\zeta\in\KK$};
\end{equation}
the existence of such a number~$q$ is implied by Hypothesis~\hyperlink{H1}{(H$_1$)}. We denote by $N=N(d)\ge1$ the least integer satisfying the inequality $b^{N}R\le d/2$, define $\XXX=X\times X$, and for $n\ge 0$ and  $-N\le k\le-1$ introduce the pairwise disjoint sets
\begin{align}
	\XXX_{\!\infty}&=\bigl\{(u,u')\in\XXX:u=u'\bigr\},\label{Xinfty}\\
	\XXX_{\!n}&=\bigl\{(u,u')\in\XXX:q^{n+1} d <\|u-u'\|\le q^n d\bigr\},\label{Xn}\\
	\XXX_{\!k}&=\bigl\{(u,u')\in\XXX: \|u-u'\|> d ,
	Rb^{N+k+1}<\|u\|\vee\|u'\|\le Rb^{N+k}\bigr\}.\label{Xk}
\end{align}
It is straightforward to check that~$\XXX$ is the union of the sets~$\{\XXX_{\!n}, -N\le n\le\infty\}$. Recall that $P_k(u,\Gamma)$ stands for the transition function of the Markov process defined by~\eqref{1.1}. A key observation when proving~\eqref{squeezing} is the following result.

\begin{theorem} \label{t-coupling}
	Under the hypotheses of Theorem~\ref{t1.1}, there are  $\gamma\in(0,1]$ and $C>0$ such that, for any $\nu\in(0,1)$, there is $d_0\in(0,1)$ possessing the following property: for any $d\in(0,d_0)$ we can construct a number $p\in(0,1)$, a probability space $(\Omega,\FF,\IP)$  and measurable functions $V,V':X\times X\times\Omega\to X$ (a~coupling) such that the following assertions hold.
	\begin{itemize}
		\item [\bf(a)]
		For any $(u,u')\in \XXX$, the laws of $V(u,u';\cdot)$ and $V'(u,u';\cdot)$ coincide with $P_1(u,\cdot)$ and $P_1(u',\cdot)$, respectively. Moreover, $V(u,u;\cdot)=V'(u,u;\cdot)$ almost surely for any $(u,u)\in\XXX_{\!\infty}$.
		\item [\bf(b)]
		For any $(u,u')\in\XXX_{\!n}$, we have 
		\begin{align} 
			\IP\bigl\{\bigl(V(u,u'),V'(u,u')\bigr)\in\XXX_{\!m}
			\mbox{ for some $m\ge n+1$}\bigr\}
			&\ge 1-\nu, \label{transition1}\\
			\IP\bigl\{\bigl(V(u,u'),V'(u,u')\bigr)\in\XXX_{\!m}
			\mbox{ for some $m\le n-2$}\bigr\}&\le C\,\|u-u'\|^\gamma,
			\label{transition2}
		\end{align}
		where $n\ge0$ in~\eqref{transition1} and $n\ge1$ in~\eqref{transition2}. 		
		\item [\bf(c)]
		For $-N\le k\le -1$ and $(u,u')\in\XXX_{\!k}$, we have 
		\begin{align} 
			\IP\bigl\{\bigl(V(u,u'),V'(u,u')\bigr)\in\XXX_{\!m}
			\mbox{ for some $m\ge k+1$}\bigr\}
			\ge p. \label{transition3}
		\end{align}	
		\end{itemize}
\end{theorem}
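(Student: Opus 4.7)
The plan is to construct $(V,V')$ separately on the three disjoint regimes $\XXX_\infty$, $\bigcup_{-N\le k\le-1}\XXX_k$, and $\bigcup_{n\ge0}\XXX_n$. On $\XXX_\infty$ take the trivial coupling $V=V'=S(u,\eta)$ with $\DD(\eta)=\ell$, which settles the diagonal clause of (a). For ``far'' pairs drive both trajectories towards $\hat u=0$ using one and the same noise, exploiting the dissipativity of \hyperlink{H2}{(H$_2$)}. For ``close'' pairs implement the quadratic scheme of Section~\ref{s1.3}: build a finite-rank, $O(\|u-u'\|)$-small Lipschitz perturbation $\varPsi^{u,u'}$ of the identity on $E$ such that pushing $\eta$ through it contracts the one-step distance by a factor~$q$; then use the measurable gluing lemma to convert this into an actual coupling with the prescribed marginals.

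\emph{Far regime.} For $(u,u')\in\XXX_k$ with $-N\le k\le-1$, take $V=S(u,\eta)$, $V'=S(u',\eta)$ with $\DD(\eta)=\ell$. After the normalisation $\hat u=0=\hat\eta$, \hyperlink{H2}{(H$_2$)} gives $\|S(w,\hat\eta)\|\le a\|w\|$ for every $w\in X$, while \hyperlink{H1}{(H$_1$)} provides a Lipschitz constant $L$ of $S$ in $\eta$ on $X\times\KK$. Setting $r=(b-a)Rb^{N-1}/L$ and using $\|w\|\le Rb^{N+k}\le Rb^{N-1}$ for $k\le-1$, on the event $\{\|\eta-\hat\eta\|_E\le r\}$ both $\|V\|$ and $\|V'\|$ are bounded by $aRb^{N+k}+Lr\le bRb^{N+k}=Rb^{N+k+1}$, so $(V,V')$ lies in $\XXX_m$ for some $m\ge k+1$. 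Taking $p:=\ell(B_E(\hat\eta,r))$, which is strictly positive because $\hat\eta\in\supp\ell$, yields~(c).

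\emph{Close regime.} For $(u,u')\in\XXX_n$ with $n\ge0$, put $\delta=\|u-u'\|\le q^n d$. Theorem~\ref{p2.5} supplies, for each $\e,r>0$, a jointly measurable set $\NN^u\subset\KK$ with $\ell(\NN^u)\le r$ and, for $\eta\notin\NN^u$, a jointly measurable finite-rank operator $R_\e(u,\eta)$ satisfying $\|R_\e(u,\eta)\|\le C(\e,r)$ uniformly in $u\in X$ and $\|(D_\eta S)(u,\eta)R_\e(u,\eta)h-h\|\le\e\|h\|$. Set
\begin{equation*}
\varPhi^{u,u'}(\eta)=-R_\e(u,\eta)\bigl[S(u',\eta)-S(u,\eta)\bigr]\I_{\{\eta\notin\NN^u\}},\qquad \varPsi^{u,u'}=\Id+\varPhi^{u,u'}.
\end{equation*}
A Taylor expansion in $\eta$ combined with the defining property of $R_\e$ gives, for $\eta\notin\NN^u$,
\begin{equation*}
\bigl\|S(u,\eta)-S(u',\varPsi^{u,u'}(\eta))\bigr\|\le C_1\bigl(\e\delta+C(\e,r)^2\delta^2\bigr),\qquad\|\varPhi^{u,u'}(\eta)\|\le LC(\e,r)\delta.
\end{equation*}
Choose $\e$ with $C_1\e\le q/4$, then $r=\nu/2$, then $d_0$ small enough that $C_1 C(\e,r)^2 d_0\le q/4$ and $2C_2 d_0^\beta\le\nu/2$ with $C_2,\beta$ from the next step; for $d<d_0$ the squeezing $\|S(u,\eta)-S(u',\varPsi^{u,u'}(\eta))\|\le q\delta$ then holds pointwise off $\NN^u$.

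\emph{Assembling the coupling and the main obstacle.} The map $\varPsi^{u,u'}$ is a finite-rank, $O(\delta)$-small Lipschitz perturbation of identity, so the transformation-of-measure result (Theorem~\ref{T:4.1}) yields $\beta\in(0,1)$ and $C_2$ with $\|\ell-\varPsi^{u,u'}_*\ell\|_{\mathrm{var}}\le C_2\delta^\beta$. The measurable gluing lemma (Theorem~\ref{t-gluing}) then produces, on a common probability space $(\Omega,\FF,\IP)$, variables $(\eta,\eta')$ with $\DD(\eta)=\DD(\eta')=\ell$ and $\IP\{\eta'=\varPsi^{u,u'}(\eta)\}\ge 1-2C_2\delta^\beta$; setting $V=S(u,\eta)$, $V'=S(u',\eta')$ gives correct marginals by construction. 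On $\{\eta'=\varPsi^{u,u'}(\eta)\}\cap\{\eta\notin\NN^u\}$, of probability at least $1-\ell(\NN^u)-2C_2\delta^\beta\ge 1-\nu$, the squeezing gives $\|V-V'\|\le q\delta\le q^{n+1}d$, so $(V,V')\in\XXX_m$ for some $m\ge n+1$; this is~\eqref{transition1}. On $\{\eta'=\varPsi^{u,u'}(\eta)\}\cap\{\eta\in\NN^u\}$ one has $\eta'=\eta$, hence $\|V-V'\|\le q^{-1}\delta\le q^{n-1}d$, placing $(V,V')$ in $\XXX_m$ for some $m\ge n-1$ and contributing nothing to~\eqref{transition2}. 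The only event on which $(V,V')$ can land in $\XXX_m$ with $m\le n-2$ is therefore $\{\eta'\neq\varPsi^{u,u'}(\eta)\}$, of probability $\le 2C_2\|u-u'\|^\beta$, giving~\eqref{transition2} with $\gamma=\beta$. The technical heart of the argument, and the main obstacle, is the construction in Theorem~\ref{p2.5}: producing the jointly measurable family $R_\e(u,\eta)$ with the uniform operator bound $C(\e,r)$ from only the qualitative density of $\mathrm{image}\,(D_\eta S)(u,\eta)$ provided by \hyperlink{H3}{(H$_3$)}; once this black box is in place, the remaining ingredients---a Taylor expansion, the change-of-variables estimate, and the gluing lemma---are routine.
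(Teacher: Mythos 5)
Your overall architecture is the same as the paper's: the trivial coupling on the diagonal, the same-noise coupling plus dissipativity~\hyperlink{H2}{(H$_2$)} in the far regime (your computation with $r=(b-a)Rb^{N-1}/L$ and $p=\ell(B_E(\hat\eta,r))$ is essentially the paper's proof of~\eqref{decayproba}), and, for close pairs, a finite-rank Kolmogorov-type perturbation $\varPsi^{u,u'}=\Id+\varPhi^{u,u'}$ combined with a maximal coupling and the measurable gluing lemma (Theorem~\ref{t-gluing}); your event decomposition then yields \eqref{transition1} and~\eqref{transition2} exactly as in the paper. Using $R_\e(u,\eta)\bigl[S(u',\eta)-S(u,\eta)\bigr]$ instead of $R_\e(u,\eta)(D_uS)(u,\eta)(u'-u)$ is an acceptable minor variant.

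The genuine gap is the step you call routine: deducing $\|\ell-\varPsi^{u,u'}_*\ell\|_{\mathrm{var}}\le C_2\|u-u'\|^\beta$ from Theorem~\ref{T:4.1}. That theorem does not apply to a map cut off sharply by the indicator of $\{\eta\notin\NN^u\}$ with $\NN^u$ merely a measurable set of small $\ell$-measure: besides the smallness and Lipschitz bounds~\eqref{4.1} on the good set $\KK_1$, it requires hypothesis~(b), i.e.\ the quantitative estimate~\eqref{4.2} on the Lebesgue measure of $r$-neighbourhoods of $\p_w\KK_1$ in the finite-dimensional slices. Without such boundary control the conclusion can fail outright: in one dimension, a perturbation bounded by $\varkappa$ and constant (hence $\varkappa$-Lipschitz) on a union of intervals of length $\varkappa$ interlaced with its complement shifts the uniform law by a total variation distance of order one, uniformly as $\varkappa\to0$. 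This is precisely why the paper's Proposition~\ref{P:2.5} is not a formality: the good set is taken to be a sublevel set $\{\FFFF_\bfe(u,\cdot)\le\nu_l\}$ of a function that is \emph{analytic} in $\eta$, the thresholds $\nu_l$ are selected by a Sard-type argument (Lemma~\ref{l3.3}) so that on each slice the function $v\mapsto\FFFF_\bfe(u,w+v)-\nu_l$ is a non-trivial analytic function, Corollary~\ref{T:4.3} then gives the $r^c$ bound for tubular neighbourhoods of its nodal set, and the sharp indicator is replaced by the smooth cut-off $h(\FFFF_\bfe(u,\eta))$ in~\eqref{3.028} to secure the Lipschitz property of $\varPhi^{u,u'}$ up to the boundary. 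None of this is delivered by treating Theorem~\ref{p2.5} as a black box, so as written the key estimate feeding~\eqref{transition2} is unproved; to close the gap you must either reproduce this boundary/regular-value analysis or build it into the statement of your black box.
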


Taking this result for granted, let us complete the proof of the theorem. Let $\gamma\in(0,1]$ and $C>0$ be the numbers constructed in Theorem~\ref{t-coupling}. We fix $\nu>0$ so small that\footnote{The role of inequalities~\eqref{qnu} and~\eqref{nu-kappa2} will be clarified below.} 
\begin{align}	
q^{\gamma/2}+q^{-\gamma/2}\nu &<1, \label{qnu}\\
q^{\gamma/2}+3\nu &<1. \label{nu-kappa2}
\end{align}
Let $d_0>0$ be the constant constructed in Theorem~\ref{t-coupling} for the above choice of~$\nu$ and let $d\in(0,d_0)$ be a number that will be chosen below; once it is fixed, the integer~$N$ and the sets~$\XXX_{\!n}$ with $-N\le n\le +\infty$ are uniquely determined. We define 
\begin{equation} \label{Fuu}
	F(u,u')=\left\{
	\begin{array}{cl}
	0 & \mbox{for $(u,u')\in\XXX_{\!\infty}$},\\
	(q^{n}d)^{\gamma/2}  & \mbox{for $(u,u')\in\XXX_{\!n}$},\\
	M_k & \mbox{for $(u,u')\in\XXX_{\!k}$},	
	\end{array}
	\right.
\end{equation} 
where $n\ge0$, $-N\le k\le -1$, and $M_k\ge 2d^{\gamma/2}$ is a decreasing sequence to be chosen below. It is straightforward to see that~$F$ satisfies~\eqref{lowerbound}. We shall prove that inequality~\eqref{squeezing} holds with some~$\varkappa\in(0,1)$.  

To this end, we first reduce the proof to the particular case in which~$\mu_1,\mu_2$ are Dirac masses. Namely, denoting by $(V(u,u'),V'(u,u'))$ the random variables constructed in Theorem~\ref{t-coupling}, suppose we have proved that
\begin{equation} \label{squeezing-delta}
	\E F\bigl(V(u,u'),V'(u,u')\bigr)
	\le\varkappa \KK_F(\delta_{u},\delta_{u'})
	=\varkappa F(u,u')\quad 
	\mbox{for $(u,u')\in\XXX$}. 
\end{equation}
Let us take any measures $\mu_1,\mu_2\in\PP(X)$. For any $\theta>0$, there are $X$-valued random variables $\xi_1,\xi_2$ such that 
\begin{equation} \label{KD}
	\KK_F(\mu_1,\mu_2)\ge\E\,F(\xi_1,\xi_2)-\theta.
\end{equation}
Now note that the random variables $(V(u,u'),V'(u,u'))$  can be assumed to be independent of~$(\xi_1,\xi_2)$. In this case, the pair $(V(\xi_1,\xi_2),V'(\xi_1,\xi_2))$ is a coupling for~$(\PPPP_1^*\mu_1,\PPPP_1^*\mu_2)$. Using again the independence and relations~\eqref{squeezing-delta} and~\eqref{KD}, we derive
\begin{align*}
\E F\bigl(V(\xi_1,\xi_2),V'(\xi_1,\xi_2)\bigr)
\le \varkappa\,\E F(\xi_1,\xi_2)=\varkappa\,(\KK_F(\mu_1,\mu_2)+\theta). 	
\end{align*}
Since $\theta>0$ was arbitrary, this proves~\eqref{squeezing}. 

To establish~\eqref{squeezing-delta}, notice that there is nothing to prove when $(u,u')\in\XXX_{\!\infty}$. For $(u,u')\notin\XXX_{\!\infty}$, we abbreviate $F(u,u')=:F$ and distinguish between four cases, assuming that the parameters~$b$, $R$, $q$, $C$, and~$\gamma$ are  fixed (see~\eqref{lipschitz} and Theorem~\ref{t-coupling}).

\smallskip
{\it Case 1: $(u,u')\in\XXX_{\!n}$ with $n\ge1$, so $F=(q^nd)^{\gamma/2}$\/}. It follows from of~\eqref{transition1}, \eqref{transition2}, and~\eqref{Fuu} that
\begin{align*}
\E F\bigl(V(u,u'),V'(u,u')\bigr)
&\le q^{\gamma/2} F\,\IP(G_n^1)+M_{-N} \,\IP(G_n^2)
+q^{-\gamma/2}F\,\IP(G_n^3)\\
&\le F\bigl(q^{\gamma/2}\,\IP(G_n^1)+q^{-\gamma/2}\IP(G_n^3)+M_{-N}(q^nd)^{-\gamma/2}\IP(G_n^2)\bigr)\\
&=:F\varkappa_1,
\end{align*}
where $G_n^1$ and~$G_n^2$ denote the events on the left-hand sides   of~\eqref{transition1} and~\eqref{transition2}, respectively, $G_n^3$~is the complement of~$G_n^1\cup G_n^2$ corresponding to the event $\{(V,V')\in \XXX_{\!n}\cup \XXX_{\!n-1}\}$, and we used the fact that~$\{M_k\}$ is a decreasing sequence. The required inequality~\eqref{squeezing-delta} will be established if we prove that  $\varkappa_1<1$, uniformly in $n\ge1$. To this end, notice that  $\IP(G_n^3)\le \nu$ in view of~\eqref{transition1}, so that 
\begin{equation} \label{estimate1}
	q^{\gamma/2}\,\IP(G_n^1)+q^{-\gamma/2}\IP(G_n^3)
	\le q^{\gamma/2}+q^{-\gamma/2}\nu.
\end{equation}
Furthermore, it follows from~\eqref{transition2} that 
\begin{equation*}
	M_{-N}(q^nd)^{-\gamma/2} \IP(G_n^2)\le CM_{-N}(q^nd)^{\gamma/2}
	\le CM_{-N}d^{\gamma/2}.
\end{equation*}
Combining this with~\eqref{estimate1} and~\eqref{qnu}, we see that $\varkappa_1<1$, provided that 
\begin{equation} \label{kappa1}
	CM_{-N}d^{\gamma/2}<1-q^{\gamma/2}-q^{-\gamma/2}\nu. 
\end{equation}
Let us note that~$N$ depends on the choice of~$d$, so that the parameters~$M_{-N}$ and~$d$ are not independent. Our choice of~$M_k$ will ensure that~$M_{-N}\le 3d^{\gamma/2}$, so that~\eqref{kappa1} will be satisfied if 
\begin{equation} \label{kappa11}
	3Cd^\gamma<1-q^{\gamma/2}-q^{-\gamma/2}\nu. 
\end{equation}
In what follows, we fix $d\in(0,d_0)$ satisfying~\eqref{kappa11}. Together with~$b$, they determine~$N\ge1$ as the least positive integer satisfying $b^NR\le d/2$. 

\smallskip
{\it Case 2: $(u,u')\in\XXX_{\!0}$, so $F=d^{\gamma/2}$\/}. Let us set 
\begin{equation} \label{Mk}
	M_k=2 d^{\gamma/2} +\e(B^{N-1}-B^{N+k}),
\end{equation} 
where $\e>0$ and $B>1$ will be chosen below. Arguing as above, using~\eqref{transition1}, and assuming that $\e\le B^{1-N} d^{\gamma/2}$ (so that $M_{-N}\le 3d^{\gamma/2}$), we derive 
\begin{align*}
\E F\bigl(V(u,u'),V'(u,u')\bigr)
&\le q^{\gamma/2} F\,\IP(G_0^1)+M_{-N}\bigl(1-\IP(G_0^1)\bigr)\\
&\le F\bigl(q^{\gamma/2}+d^{-\gamma/2} M_{-N}\nu\bigr)
\le F\bigl(q^{\gamma/2}+3\nu\bigr)=:F\varkappa_2.
\end{align*}
In view of~\eqref{nu-kappa2}, we have $\varkappa_2<1$. 

\smallskip
{\it Case 3: $(u,u')\in\XXX_{\!-1}$, so $F=M_{-1}=2d^{\gamma/2}$\/}. It follows from~\eqref{transition3} with $m=-1$ that 
\begin{align*}
\E F\bigl(V(u,u'),V'(u,u')\bigr)
&\le p d^{\gamma/2} +(1-p)\bigl(2 d^{\gamma/2} +\e(B^{N-1}-1)\bigr)\\
& \le F\bigl(1-\tfrac{p}{2}+\e (2d^{\gamma/2})^{-1}(1-p)B^{N-1}\bigr)
=:F\varkappa_3.
\end{align*}
It is straightforward to check that $\varkappa_3\le 1-\frac{p}{4}<1$, provided that 
$$
\e\le p(1-p)^{-1}B^{-N} d^{\gamma/2},\quad B\ge2.
$$ 

\smallskip
{\it Case 4: $(u,u')\in\XXX_{\!k}$ with $-N\le k\le -2$, so $F=M_k$\/}. Using~\eqref{transition3} and~\eqref{Mk}, we derive
\begin{align*}
\E F\bigl(V(u,u'),V'(u,u')\bigr)
&\le p\bigl(2 d^{\gamma/2} +\e(B^{N-1}-B^{N+k+1})\bigr)\\
&\quad+(1-p)\bigl(2 d^{\gamma/2} +\e(B^{N-1}-1)\bigr)\\
&=2 d^{\gamma/2} +\e(B^{N-1}-pB^{N+k+1}-1+p).
\end{align*}
Let us set $B=2/p\ge 2$. Then the right-most term in the above inequality does not exceed $\varkappa_4 F$ with 
$$
\varkappa_4=1-\frac13\e d^{-\gamma/2}.
$$ 
Comparing the restrictions imposed on the parameters, we see that~\eqref{squeezing-delta} holds with $\varkappa=\max\{\varkappa_i,1\le i\le 4\}$, provided that 
\begin{equation} \label{eps}
\e=B^{-N} d^{\gamma/2}\min\bigl\{p(1-p)^{-1},B\bigr\}.	
\end{equation}

Relation~\eqref{squeezing-delta} implies inequality~\eqref{squeezing}, and the exponential mixing~\eqref{1.6} follows. Thus, to complete the proof of Theorem~\ref{t1.1}, it remains to establish Theorem~\ref{t-coupling}, which is done in the next subsection. 

\subsection{Proof of Theorem~\ref{t-coupling}}
\label{s3.3}
Let us fix an arbitrary $\nu\in(0,1)$. To define the mappings~$V$ and~$V'$, we first consider the case in which either $u=u'$ or $\|u-u'\|>d$, where $d>0$ is arbitrary for the moment. Let us denote by~$\zeta$ a random variable such that $\DD(\zeta)=\ell$. We set
$$
V(u,u',\cdot)=S(u,\zeta), \quad V'(u,u',\cdot)=S(u',\zeta). 
$$
Then $(V,V')$ satisfies property~(a). Recalling~\eqref{Xk}, we see that~\eqref{transition3} will be established if we prove that 
\begin{equation} \label{decayproba}
	\IP\bigl\{\|V\|\vee\|V\|\le b(\|u\|\vee\|u'\|)\bigr\}\ge p,
\end{equation}
where $p>0$ is a number depending only on~$d$ (but not on the vectors~$u,u'\in X$). To see this, note that $\|u\|\vee\|u'\|\ge r:=d/2$. By Hypothesis~\hyperlink{H2}{(H$_2$)} and the Lipschitz property of $S(u,\cdot):\KK\to H$, we have
 \begin{align}
	\|S(u,\zeta)\|  &\le \|S(u,0)\|+ C\|\zeta\|_E
	\le a\|u\|+C (r^{-1}\|u\|\vee\|u'\|)\|\zeta\|\notag\\
	&\le \bigl(a+Cr^{-1}\|\zeta\|\bigr)\|u\|\vee\|u'\|. \label{E:3.2}
\end{align} 
The right-most term of this inequality does not exceed $b(\|u\|\vee\|u'\|)$ provided that $\|\zeta\|\le C^{-1}r(b-a)$, and a similar estimate holds for $\|S(u',\zeta)\|$. It follows that the probability on the left-hand side of~\eqref{decayproba} is minorised by $\IP\{\|\zeta\|\le C^{-1}r(b-a)\}$. This quantity is positive because $\hat\eta=0$ is in the support of the law~$\ell$. 

\smallskip

Let us turn to the case $\|u-u'\|\le d$, with a sufficiently small~$d>0$ to be specified below. Given any $\delta>0$, we set $D_\delta=\{(u,u')\in X\times H:\|u-u'\|\le\delta\}$. We shall need the following auxiliary result whose proof is given in Section~\ref{ss-proof-of-P2.5}. 

\begin{proposition}\label{P:2.5} 
For any $\sigma, \theta\in(0,1)$ there are positive numbers $C$, $\beta$, and~$\delta$, a Borel-measurable   mapping $\varPhi : X\times H \times E  \to E$, and a family of Borel subsets $\{\KK^{u,\sigma,\theta}\subset \KK^u\}_{u\in X}$ such that $\varPhi^{u,u'}(\eta)=0$ if $\eta\notin \KK^{u,\sigma,\theta}$ or $u'=u$, and we have the following inequalities, in which $\varPsi^{u,u'}(\eta)=\eta+\varPhi^{u,u'}(\eta):$ 
\begin{align}
\ell(\KK^{u,\sigma,\theta})&\ge 1-\sigma,
\label{3.7}\\
\|\ell-\varPsi_*^{u,u'}( \ell)\|_{\mathrm{var}}&\le C\, \|u-u'\|^\beta,
 \label{3.8}\\
 \|S(u,\eta)- S(u',\varPsi^{u,u'}(\eta))\| &\le \theta\, \|u-u'\|,
\label{3.9}
\end{align} 
where $(u,u')\in D_\delta$ and $\eta\in \KK^{u,\sigma,\theta}$.
\end{proposition}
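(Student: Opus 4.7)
The plan is to take $\varPhi^{u,u'}$ in the form dictated by the approximate homological equation discussed in Section~\ref{ss-general-scheme}. Concretely, I would set
\[
\varPhi^{u,u'}(\eta)=-R_\e(u,\eta)\,(D_uS)(u,\eta)(u'-u)
\qquad \text{for } \eta\in\KK^{u,\sigma,\theta},
\]
and $\varPhi^{u,u'}(\eta)=0$ otherwise, where $R_\e(u,\eta):H\to E$ is a finite-rank, Borel-measurable approximate right inverse of $(D_\eta S)(u,\eta)$ furnished by Theorem~\ref{p2.5}, and $\KK^{u,\sigma,\theta}$ is the Borel set on which both $(D_\eta S)(u,\eta) R_\e(u,\eta)\approx\Id$ holds with accuracy~$\e$ and $\|R_\e(u,\eta)\|\le C(\e,\sigma)$ holds uniformly. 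Hypothesis~\hyperlink{H3}{(H$_3$)} together with Theorem~\ref{p2.5} should guarantee that $\ell(\KK^{u,\sigma,\theta})\ge 1-\sigma$, yielding~\eqref{3.7}. By construction $\varPhi^{u,u'}$ has finite-dimensional range, vanishes when $u'=u$, and is jointly Borel-measurable in $(u,u',\eta)$.

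To verify the squeezing estimate~\eqref{3.9}, I would expand by Taylor's formula in both arguments:
\[
S(u',\eta+\zeta)-S(u,\eta)
=(D_uS)(u,\eta)(u'-u)+(D_\eta S)(u,\eta)\zeta+r,
\]
where, by Hypothesis~\hyperlink{H1}{(H$_1$)}, the remainder satisfies $\|r\|\le C_0(\|u-u'\|^2+\|\zeta\|^2)$ uniformly on~$X\times\KK$. Plugging $\zeta=\varPhi^{u,u'}(\eta)$ and using $\|[\Id-(D_\eta S)R_\e](D_uS)(u'-u)\|\le C_1\e\,\|u-u'\|$ on $\KK^{u,\sigma,\theta}$, together with $\|\zeta\|\le C(\e,\sigma)\|u-u'\|$, we find
\[
\|S(u,\eta)-S(u',\varPsi^{u,u'}(\eta))\|
\le C_1\e\|u-u'\|+C_0(1+C(\e,\sigma)^2)\,\delta\,\|u-u'\|.
\]
It suffices to fix $\e=\e(\theta)$ so that $C_1\e\le\theta/2$, and then $\delta=\delta(\e,\sigma,\theta)$ small enough that the second term is also $\le(\theta/2)\|u-u'\|$.

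For the total-variation bound~\eqref{3.8}, I would invoke Theorem~\ref{T:4.1}. On the good set, $\varPhi^{u,u'}$ takes values in the fixed finite-dimensional image of $R_\e(u,\cdot)$, and both its sup-norm and Lipschitz constant in~$\eta$ are controlled by $C(\e,\sigma)\|u-u'\|$; the latter requires that $R_\e(u,\eta)$ and $(D_uS)(u,\eta)$ be Lipschitz in~$\eta$, which follows from~\hyperlink{H1}{(H$_1$)} and should be built into the construction of $R_\e$ in Theorem~\ref{p2.5}. Applying Theorem~\ref{T:4.1} to $\varPsi^{u,u'}=\Id+\varPhi^{u,u'}$ with $\varkappa=C(\e,\sigma)\|u-u'\|$ yields the bound $C'\|u-u'\|^\beta$ (with an additional contribution of order $\|u-u'\|$ coming from the discontinuity of $\varPhi^{u,u'}$ at the boundary of $\KK^{u,\sigma,\theta}$, absorbed in~$\beta\le1$), possibly after shrinking $\delta$ further.

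The main obstacle, in my view, is not the formal Taylor computation but the measurability and Lipschitz regularity demands on $R_\e(u,\eta)$: the approximate right inverse has to be jointly Borel in $(u,\eta)$, finite-dimensional with range depending measurably on~$u$, and Lipschitz in~$\eta$ on the good set $\KK^{u,\sigma,\theta}$ with a constant under control. This is the delicate measurable-selection issue that Theorem~\ref{p2.5} is designed to handle; the rest of the proof is then a careful tuning of the parameters $\e,\sigma,\theta,\delta$ in that order, exploiting the fact that $C(\e,\sigma)$ blows up as $\e\to0$ or $\sigma\to0$ but is finite once these are fixed, so the quadratic term can always be suppressed by taking $\delta$ small.
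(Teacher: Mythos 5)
Your construction of $\varPhi^{u,u'}$, the Taylor-expansion argument for~\eqref{3.9}, and the use of Theorem~\ref{p2.5} for~\eqref{3.7} all match the paper. The genuine gap is in the proof of~\eqref{3.8}: Theorem~\ref{T:4.1} has \emph{two} hypotheses, and you only address~(a). Hypothesis~(b) requires the quantitative estimate~\eqref{4.2} on the Lebesgue measure of a tubular neighbourhood of the relative boundary $\p_w\KK_1$ of the slices of the good set, and this is precisely where $\varPhi^{u,u'}$ jumps to zero; without~\eqref{4.2} the change-of-variables argument behind Theorem~\ref{T:4.1} gives no control of the total variation distance, and the boundary contribution is \emph{not} automatically ``of order $\|u-u'\|$'' as you assert --- it depends on the geometry of the set, not on the size of $\varPhi$. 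If you simply take $\KK^{u,\sigma,\theta}$ to be the set $\KK_\e^u=\{\FFFF_\e(u,\cdot)\le\nu_\e\}$ furnished by Theorem~\ref{p2.5}, there is no reason why~\eqref{4.2} should hold: $\nu_\e$ may be a critical value of $\FFFF_\e(u,w+\cdot)$, or this function may even be constant equal to $\nu_\e$ on some slice, so the boundary can be ``fat''. The paper's proof resolves this by \emph{moving the threshold}: Lemma~\ref{l3.3} partitions the projected domain into finitely many pieces and uses Sard's theorem plus compactness to select levels $\nu_l\in(\nu_\e,3\nu_\e/2)$ for which the analytic functions $v\mapsto\FFFF_\e(u,w+v)-\nu_l$ are not identically zero; the boundary of the resulting good set is then contained in nodal sets of non-trivial analytic functions, and Corollary~\ref{T:4.3} (measure of a tubular neighbourhood of a nodal set) yields~\eqref{4.2}. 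This step, together with the analyticity in $\eta$ built into Hypothesis~(H$_1$) and Theorem~\ref{p2.5}, is the missing idea in your proposal.

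A secondary, fixable point: to verify the Lipschitz bound in~(a) of Theorem~\ref{T:4.1} you cannot argue slice-by-slice from local Lipschitzness of $R_\e$ and $(D_uS)$ alone, because $\KK^{u,\sigma,\theta}$ need not be convex, so two points of the good set may be joined only through its complement. The paper handles this by inserting a cutoff: on the good set one has $\varPhi^{u,u'}(\eta)=-h\bigl(\FFFF_\e(u,\eta)\bigr)R_\e(u,\eta)(D_uS)(u,\eta)(u'-u)$ with $h\equiv1$ on $[0,3\nu_\e/2]$ and $h\equiv0$ outside $[0,2\nu_\e]$, and the right-hand side is globally Lipschitz in $\eta$ with constant $\lesssim\|u-u'\|$ (this also explains why the thresholds $\nu_l<3\nu_\e/2<2\nu_{\e}$ and the enlarged domain $\DD_\e$ in~\eqref{2.024} are needed, so that $R_\e$ is still defined near the boundary of the good set). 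By contrast, your worry about measurability of $R_\e$ is not an issue: Theorem~\ref{p2.5} already provides a \emph{continuous} map $R_\bfe$ on the compact set $\DD_\bfe$, so joint Borel measurability of $\varPhi$ comes for free.
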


From now on, we assume that $d\le\delta$, where~$\delta>0$ is the number constructed in Proposition~\ref{P:2.5} with some parameters~$\sigma$ and~$\theta$ to be chosen below. We denote by~$\eta$ an $E$-valued random variable with law~$\ell$ and by $\MMMM_{u,u'}\in\PP(E\times E)$ the law of the pair $(\eta,\varPsi^{u,u'}(\eta))$, where $\varPsi^{u,u'}:E\to E$ is the mapping constructed in Proposition~\ref{P:2.5}. Let $\NNNN_{u,u'}\in\PP(E\times E)$ be a maximal coupling for the pair $(\varPsi^{u,u'}_*(\ell),\ell)$. That is, $\NNNN_{u,u'}$ is a probability measure on~$E\times E$ with marginals $\varPsi^{u,u'}_*(\ell)$ and~$\ell$ such that 
\begin{equation} \label{mc}
	\NNNN_{u,u'}\bigl(\{(\zeta,\zeta')\in E\times E:\zeta\ne\zeta'\}\bigr)
	=\|\varPsi^{u,u'}_*(\ell)-\ell\|_{\mathrm{var}}. 
\end{equation}
In view of Theorem~1.2.28 in~\cite{KS-book}, we can choose~$\NNNN_{u,u'}$ to be measurable in $(u,u')$, i.e., to be a random probability measure on~$E\times E$ with the underlying space~$D_\delta$. By construction, the projections of~$\MMMM_{u,u'}$ and~$\NNNN_{u,u'}$ to, respectively, the second and the first components coincide for any $(u,u')\in D_\delta$. Therefore, by the gluing lemma (see Theorem~\ref{t-gluing} and the remark following it), there is a probability space~$(\Omega,\FF,\IP)$ and measurable functions $\zeta,\hat\zeta,\zeta':D_\delta\times \Omega\to E$ such that, for any $(u,u')\in D_\delta$, 
\begin{equation} \label{zzz}
	\DD\bigl(\zeta(u,u',\cdot),\hat\zeta(u,u',\cdot)\bigr)=\MMMM_{u,u'},\quad
	\DD\bigl(\hat\zeta(u,u',\cdot),\zeta'(u,u',\cdot)\bigr)=\NNNN_{u,u'}. 
\end{equation}
The very definition of~$\zeta$ and~$\zeta'$ implies that their laws coincide with~$\ell$ for any $(u,u')\in D_\delta$. Therefore, defining the functions
$$
V(u,u',\omega)=S(u,\zeta(u,u',\omega)), \quad 
V'(u,u',\omega)=S(u,\zeta'(u,u',\omega)),
$$
we see that their laws coincide with $P_1(u,\cdot)$ and~$P_1(u',\cdot)$, respectively. Let us prove~\eqref{transition1} and~\eqref{transition2}. 

To this end, for any $(u,u')\in D_\delta$ we introduce the events\footnote{We shall often omit the argument~$\omega$ to simplify formulas.}
\begin{align*}
	\Omega_1^{u,u'}&:=\{\zeta\in\KK^{u,\sigma,\theta}\}\cap\{\hat\zeta(u,u')=\zeta'(u,u')\},\\
	\Omega_2^{u,u'}&:=\{\zeta\in\KK^{u,\sigma,\theta}\}\cap\{\hat\zeta(u,u')\ne\zeta'(u,u')\},\\
	\Omega_3^{u,u'}&:=\{\zeta\notin\KK^{u,\sigma,\theta}\}.
\end{align*}
These events form a partition of the probability space~$\Omega$, and it follows from~\eqref{3.7}, \eqref{3.8}, \eqref{mc}, and~\eqref{zzz} that 
\begin{align}
	\IP(\Omega_1^{u,u'})&\ge 1-\sigma-C_1\delta^\beta,\label{omega1}\\
	\IP(\Omega_2^{u,u'})&\le C_1\|u-u'\|^\beta,\label{omega2}
\end{align}
where $C_1$, $\beta$, and~$\delta$ depend on~$\sigma$ and~$\theta$. Moreover, it follows from~\eqref{3.9} and~\eqref{lipschitz} that, on the event~$\Omega_1^{u,u'}$, we have
\begin{align}
	\|V(u,u')-V'(u,u')\|&=\|S(u,\zeta(u,u'))-S(u',\hat\zeta(u,u'))\|
	\le \theta\|u-u'\|, \label{trans1}
\end{align}
whereas on $\Omega_3^{u,u'}$, 
\begin{equation} \label{trans2}
	\|V(u,u')-V'(u,u')\|=\|S(u,\zeta(u,u'))-S(u',\zeta(u,u'))\|
	\le q^{-1}\|u-u'\|. 
\end{equation}
We now specify our choice of the parameters. Let 
\begin{equation} \label{st}
	\sigma=\frac{\nu}{2}, \quad \theta=q,
\end{equation}
let $C=:C_1$, $\beta$ and~$\delta$ be the constants constructed in Proposition~\ref{P:2.5}, and let $d_0\in(0,\delta)$ be so small that $C_1d_0^\beta\le\frac{\nu}{2}$; cf.~\eqref{omega1} and~\eqref{omega2}. It follows from~\eqref{trans1}, \eqref{trans2}, and~\eqref{st} that, for $(u,u')\in\XXX_{\!n}$, we have 
\begin{align*}
	\{(V,V')\in\XXX_{\!m}\mbox{ for some $m\ge n+1$}\}&\supset \Omega_1^{u,u'},\\
	\{(V,V')\in\XXX_{\!m}\mbox{ for some $m\le n-2$}\}&\subset \Omega_2^{u,u'},
\end{align*}
where $n\ge0$ for the first inclusion and $n\ge1$ for the second. The required inequalities~\eqref{transition1} and~\eqref{transition2} (with $\gamma=\beta$ and $C=C_1$) follow now from~\eqref{omega1} and~\eqref{omega2}, respectively. This completes the proof of Theorem~\ref{t-coupling}. 

\medskip
Thus, it remains to establish Proposition~\ref{P:2.5}. To do it, we need two auxiliary results about transformation of measures under Lipschitz maps close to identity and existence of approximate right inverse for a family of operators with dense image. Since those results may have some independent interest, we formulate and prove them in an abstract setting.

\subsection{Image of measures under piecewise Lipschitz maps}
\label{s2.2}
Let~$E$ be a separable Hilbert space, let~$\KK\subset E$ be a compact subset, and let $\varPsi:\KK\to E$ be a map of the form $\varPsi(\eta)=\eta+\varPhi(\eta)$, where $\varPhi:\KK\to E$ is a Borel-measurable ``small'' map vanishing outside~$\KK$. Our goal is to study the transformation of measures on~$\KK$ under~$\varPsi$. We shall consider the situation in which the maps $\varPhi$ and $\varPsi$ are piecewise Lipschitz in the following  sense: their restrictions to a ``large'' closed subset $\KK_1\subset\KK$ is Lipschitz, while outside~$\KK_1$ they are, respectively, the zero-map and the identity-map.

\smallskip
Suppose that $E$ is the direct sum of closed subspaces $\EE$ and $\EE'$, where $\dim\EE<+\ty$, and  let~${\mathsf P}_\EE$ and~${\mathsf P}_{\EE'}$ be  the associated projections. We assume that the image of~$\varPhi$ is contained in~$\EE$. Given any subset $A\subset E$, we denote by~$A'$ its projection to~$\EE'$, and for any $w\in A'$, we write $A(w)=\{v\in \EE: v+w\in A\}$. Let $\ell \in \PP(E)$ be a measure that is supported by~$\KK$ and is representable as the tensor product of its projections~$\ell_\EE$ and~$\ell_{\EE'}$ under~${\mathsf P}_{\EE}$ and ${\mathsf P}_{\EE'}$, respectively. We assume that~$\ell_\EE$ has  a Lipschitz-continuous density~$\rho$ with respect to the Lebesgue  measure on~$\EE$.

\begin{theorem} \label{T:4.1}
Let $\varPhi:\KK\to \EE\subset E$ be a Borel-measurable map that possesses the following properties:
\begin{itemize}
\item[\bf(a)]
There is a  positive number $\varkappa$ and a closed subset $\KK_1\subset\KK$ such that $\varPhi\big|_{\KK\setminus\KK_1}=0$ and 
\begin{equation} \label{4.1}
\|\varPhi(\eta)\|\le\varkappa, \quad 
\|\varPhi(\eta)-\varPhi(\eta')\|\le\varkappa\|\eta-\eta'\|\quad
\mbox{for $\eta,\eta'\in\KK_1$}. 
\end{equation}
\item[\bf(b)]
There are positive numbers $c$ and $\gamma$ such that, for any  $w\in \KK_1'$ and  $r\in[0,1]$, we have
\begin{equation} \label{4.2}
\Leb\bigl\{v\in \EE:\dist\bigl(v,\p_w\KK_1\bigr)\le r\bigr\}\le c \,r^\gamma,
\end{equation}
where $\p_w\KK_1=\KK_1(w)\cap \overline{\KK_1(w)^c}$ and $\KK_1(w)^c=\KK(w)\setminus\KK_1(w)$.
\end{itemize}
Let $\ell\in\PP(E)$ be a measure satisfying the above hypotheses such that $\supp\ell\subset\KK$. Then there are positive numbers~$C$ and~$\beta$ depending only on~$\ell$, $c$, and~$\gamma$ such that
\begin{equation} \label{4.3}
\|\ell-\varPsi_*(\ell)\|_{\mathrm{var}}\le C \varkappa^\beta. 
\end{equation}
\end{theorem}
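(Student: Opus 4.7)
The plan is to reduce the problem, via Fubini, to a family of finite-dimensional total variation bounds indexed by $w\in\EE'$, and in each fiber to compare the densities of $\ell$ and $\varPsi_*\ell$ through the area formula for Lipschitz maps, with the boundary error controlled by hypothesis~(b). Set $\mu:=\ell|_{\KK_1}$; because $\varPhi$ vanishes on $\KK\setminus\KK_1$, the restriction $\ell|_{\KK\setminus\KK_1}$ is invariant under $\varPsi_*$, and so $\ell-\varPsi_*\ell=\mu-\varPsi_*\mu$ as signed measures. Since the image of $\varPhi$ lies in the finite-dimensional subspace $\EE$, the map $\varPsi$ preserves the $\EE'$-coordinate; writing $\varPsi_w(v):=v+\varPhi(v+w)$ and $\mu_w(dv):=\rho(v)\chi_{\KK_1(w)}(v)\,dv$, the product structure of $\ell$ and convexity of the total variation norm give
\begin{equation}\label{fiberwise}
\|\ell-\varPsi_*\ell\|_{\mathrm{var}}\le\int_{\EE'}\|\mu_w-(\varPsi_w)_*\mu_w\|_{\mathrm{var}}\,d\ell_{\EE'}(w),
\end{equation}
so it suffices to bound the integrand uniformly in $w$ by $C\varkappa^\beta$.

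\textbf{Fiberwise change of variables.} Assume $\varkappa<1/2$; otherwise \eqref{4.3} is trivial. Hypothesis~(a) makes $\varPsi_w$ injective on $\KK_1(w)$ with a bi-Lipschitz inverse, and Rademacher's theorem gives a.e.\ differentiability with $\bigl|\det D\varPsi_w-1\bigr|\le C_\EE\varkappa$. The area formula for Lipschitz maps then identifies $(\varPsi_w)_*\mu_w$ with the measure of density $p_w(v)=\rho(\varPsi_w^{-1}(v))/|\det D\varPsi_w(\varPsi_w^{-1}(v))|$ on $\varPsi_w(\KK_1(w))$, whence
\[
\|\mu_w-(\varPsi_w)_*\mu_w\|_{\mathrm{var}}=\tfrac12\int_\EE\bigl|\rho\,\chi_{\KK_1(w)}-p_w\,\chi_{\varPsi_w(\KK_1(w))}\bigr|\,dv.
\]
On the intersection $I_w:=\KK_1(w)\cap\varPsi_w(\KK_1(w))$ the Lipschitz continuity of $\rho$, the bound $\|\varPsi_w^{-1}(v)-v\|\le\varkappa$, and the Jacobian estimate combine to give $|\rho(v)-p_w(v)|\le C_1\varkappa$; as $I_w$ is contained in a fixed bounded subset of $\EE$ (the projection of the compact $\KK$), this piece contributes $O(\varkappa)$ to the fiberwise total variation.

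\textbf{Boundary error via (b).} On the symmetric difference $\Delta_w:=\KK_1(w)\triangle\varPsi_w(\KK_1(w))$ the integrand is bounded by $\|\rho\|_\infty$, so the remaining task is to bound $\Leb(\Delta_w)$. A Banach fixed-point argument applied to the contraction $u\mapsto v-\varPhi(u+w)$ on the closed $\varkappa$-ball around $v$ shows that any $v$ whose ambient $\varkappa$-neighborhood lies inside $\KK_1(w)$ belongs to $\varPsi_w(\KK_1(w))$, while the estimate $\|\varPhi\|\le\varkappa$ puts every $v\in\varPsi_w(\KK_1(w))\setminus\KK_1(w)$ within distance $\varkappa$ of some point of $\KK_1(w)$ that itself lies within $\varkappa$ of $\KK(w)\setminus\KK_1(w)$. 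Both halves of $\Delta_w$ thus sit within distance $2\varkappa$ of $\p_w\KK_1$, and hypothesis~(b) gives $\Leb(\Delta_w)\le c(2\varkappa)^\gamma$. Combining with the bulk estimate yields $\|\mu_w-(\varPsi_w)_*\mu_w\|_{\mathrm{var}}\le C_2\varkappa^\beta$ with $\beta:=\min(\gamma,1)$, and inserting this in \eqref{fiberwise} proves~\eqref{4.3}.

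\textbf{Main obstacle.} The heart of the argument is the geometric inclusion $\Delta_w\subset\{v\in\EE:\dist(v,\p_w\KK_1)\le 2\varkappa\}$: it converts the pointwise Lipschitz bound on $\varPhi$ into a \emph{global} statement that $\varPsi_w$ almost bijects $\KK_1(w)$ onto itself, with the symmetric difference confined to a thin shell along $\p_w\KK_1$. Its justification via the contraction mapping principle requires matching the topological boundary of $\KK_1(w)$ in $\EE$ with the set $\p_w\KK_1$ featuring in hypothesis~(b), which is the only nontrivial step; everything else is routine bookkeeping with the area formula and Fubini.
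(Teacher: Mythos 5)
Your Fubini reduction, the area-formula comparison on the overlap $I_w$, and the fixed-point argument showing that every $v$ with $B_{\EE}(v,\varkappa)\subset\KK_1(w)$ lies in $\varPsi_w(\KK_1(w))$ are all sound (modulo the routine point that applying Rademacher and the area formula on the closed set $\KK_1(w)$ tacitly uses a Kirszbraun extension of $\varPhi$, which the paper makes explicit). The genuine gap is precisely the step you defer as the ``main obstacle'': the inclusion $\Delta_w\subset\{v:\dist(v,\p_w\KK_1)\le 2\varkappa\}$ is not merely unproved, it is false under hypotheses (a)--(b), because $\p_w\KK_1$ is a boundary \emph{relative to} $\KK(w)$, while your ball argument only detects the topological boundary of $\KK_1(w)$ in $\EE$. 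Concretely, take $\KK_1=\KK$ and $\varPhi\equiv\varkappa e$ a constant shift along $\EE$: hypothesis (a) holds, and (b) holds vacuously since $\KK_1(w)^c=\varnothing$, hence $\p_w\KK_1=\varnothing$; yet $\Delta_w=\KK(w)\,\triangle\,(\KK(w)+\varkappa e)$ has Lebesgue measure of order $\varkappa$, so the claimed neighbourhood inclusion (and the bound $\Leb(\Delta_w)\le c(2\varkappa)^\gamma$ drawn from it) collapses. The same failure occurs whenever the exterior witness $v'$ produced by your argument lies outside $\supp\rho=\KK(w)$; and even when $v'\in\KK_1(w)^c$, being within $\varkappa$ of both $\KK_1(w)$ and $\KK_1(w)^c$ does not place $v$ within $2\varkappa$ of $\KK_1(w)\cap\overline{\KK_1(w)^c}$ without a connectedness or intermediate-value argument (picture $\KK(w)$ made of two nearby disjoint balls, one in $\KK_1(w)$ and one in $\KK_1(w)^c$). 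To rescue your scheme you would need to split $\Delta_w$: the portion whose nearby exterior points leave $\KK(w)$ must be handled not by (b) but by the fact that the Lipschitz density $\rho$ vanishes on $\EE\setminus\{\rho>0\}$, so both $\rho$ and the transported density $p_w$ are $O(\varkappa)$ there; the remaining portion still requires a separating-set reading of (b) (in the paper's application, (4.2) is in fact verified for the full level set of an analytic function in a ball, which does separate $\KK_1(w)$ from $\KK_1(w)^c$). None of this appears in your write-up, so the boundary term is uncontrolled.

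For contrast, the paper's proof never needs to locate $\varPsi_w(\KK_1(w))$ as a set: it extends $\varPhi$ by Kirszbraun, inserts Lipschitz cutoffs $\chi_r^{\pm}$ at a free scale $r$ (equal to $1$ on $\KK_1(w)$, resp.\ $0$ on $\overline{\KK_1(w)^c}$), changes variables over all of $\EE$, and compares densities pointwise with error $C\varkappa(1+r^{-1})$; hypothesis (b) enters only through $\int|\chi_r^+-\chi_r^-|\,\rho\le c\|\rho\|_\infty r^\gamma$, and optimising $r=\varkappa^{1/(1+\gamma)}$ yields $\beta=\gamma/(1+\gamma)$. Your route, if the boundary step were repaired as above, would give the better exponent $\min(\gamma,1)$, but as written the central geometric claim fails, so the proof is incomplete.
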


\begin{proof}
Note that the assertion of the theorem is trivial if~$\varkappa$ is separated from zero, so that we shall consider the case $\varkappa\le \frac12$. We wish to estimate the supremum of the absolute value of the expression $\langle f\circ\varPsi,\ell\rangle-\langle f,\ell\rangle$ over all indicator functions~$f$ of Borel sets in~$\KK$. To this end, we use the Fubini theorem to write
\begin{align}
\langle f\circ\varPsi,\ell\rangle 
&=\int_{\KK'}\ell_{\EE'}(\dd w)\int_{\KK(w)} f(v+w+\varPhi(v+w))\rho(v)\dd v,
\label{4.5}\\
\langle f,\ell\rangle 
&=\int_{\KK'}\ell_{\EE'}(\dd w)\int_{\KK(w)} f(v+w)\rho(v)\dd v.
\label{4.6}
\end{align}
Suppose we have shown that 
\begin{equation} \label{4.011}
	\biggl|\int_{\KK_1(w)} 
	\bigl(f(v+w+\varPhi(v+w))-f(v+w)\bigr)\rho(v)\dd v\biggr|
	\le C_1\varkappa^\beta,
\end{equation}
where $C_1$ and~$\beta$ do not depend on~$w$, $f$, and~$\varkappa$. In this case, taking the absolute value of the difference between~\eqref{4.5} and~\eqref{4.6}, using that $\varPhi$ is zero outside~$\KK_1$, and estimating the interior integral with the help of~\eqref{4.011}, for any indicator function~$f$ we derive 
$$
|\langle f\circ\varPsi,\ell\rangle -\langle f,\ell\rangle|\le C\varkappa^\beta. 
$$
Since~$f$ is arbitrary, we arrive at the required estimate~\eqref{4.3}. Thus, we need to establish~\eqref{4.011}. 

We first outline the main idea. Suppose that $\KK_1(w)$ coincides with the whole space~$\EE$. In this case, we can make a change of variable $v\mapsto v+\varPhi(v+w)=v'$ and rewrite the integral $\int_{\EE} f(v+w+\varPhi(v+w))\rho(v)\dd v$ in the form
$$
\int_\EE f(v'+w)
\frac{\rho(\Theta_w(v'))}{\det\bigl(I+(D\varPhi)(\Theta_w(v')+w)\bigr)}\dd v',
$$
where $\Theta_w(v')$ is the inverse of $v+\varPhi(v+w)$ with respect to~$v$. This expression is easy to compare with $\int_{\EE} f(v+w)\rho(v)\dd v$ due to inequalities~\eqref{4.1}. However, the set~$\KK_1(w)$ may have a complicated structure, and to carry out the above mentioned change of variables, we need to extend~$\varPhi$ to the whole space and to introduce some truncations not to change much the values of the integrals. 

Let us turn to the accurate proof of~\eqref{4.011}. We first extend the mapping~$\varPhi$ from~$\KK_1$ to~$E$. To this end, we use the following result, whose proof can be found in~\cite{valentine-1945} (see also Section~2.10.43 of~\cite{federer1969} for the finite-dimensional case, which is not sufficient for our purposes).

\begin{proposition}[Kirszbraun theorem]
Let $E_1$ and $E_2$ be two Hilbert spaces, let $A\subset E_1$ be a set and $\varPhi : A \to E_2$ be a Lipschitz-continuous function, with a Lipschitz constant~$\varkappa$. Then there is a function $\tilde \varPhi:E_1\to E_2$ that coincides with~$\varPhi $ on~$A$ and is Lipschitz continuous with the same constant~$\varkappa$.
\end{proposition}

Let us denote by~$\tilde\varPhi:E\to \EE$ a Lipschitz-continuous function, with Lipschitz constant~$\le\varkappa$, that coincides with~$\varPhi$ on~$\KK_1$. Since~$\KK_1$ is compact, we can multiply~$\tilde\varPhi$ by a cut-off function, so that there is no loss of generality in assuming that~$\tilde\varPhi$ has a bounded support and Lipschitz constant~$\le C\varkappa$, where~$C>0$ does not depend on~$\varkappa$. 

To introduce truncations, we define the sets
\begin{align*}
\KK_r^+(w)&=\{v\in \KK(w):\dist(v,\KK_1(w))\le r\}, 	\\
\KK_r^-(w)&=\{v\in \KK(w):\dist(v,\KK_1(w)^c)\le r\},
\end{align*}
where $r>0$ is a small parameter chosen below. Let us consider the functions
\begin{align*}
\chi_r^+(v,w)
&=\frac{\dist(v,\KK_r^+(w)^c)}{\dist(v,\KK_r^+(w)^c)+\dist(v,\KK_1(w))}, \\
\chi_r^-(v,w)
&=\frac{\dist(v,\KK_1(w)^c)}{\dist(v,\KK_1(w)^c)+\dist(v,\KK_r^-(w)^c)}. 
\end{align*}
These are non-negative functions bounded by~$1$ and Lipschitz continuous with constant~$r^{-1}$. Since~$f$ is a non-negative function and the support of~$\rho$ is equal to~$\KK(w)$, we can write
\begin{align*}
\int_{\KK_1(w)}f(v+w+\varPhi(v+w))\rho(v)\dd v
&\le \int_{\EE} f(v+w+\tilde\varPhi(v+w))\chi_r^+(v,w)\rho(v)\dd v,\\
\int_{\KK_1(w)}f(v+w)\rho(v)\dd v
& \ge \int_{\EE} f(v+w)\chi_r^-(v,w)\rho(v)\dd v\\
&\ge\int_{\EE} f(v+w)\chi_r^+(v,w)\rho(v)\dd v-c\,\|\rho\|_\infty r^\gamma,
\end{align*}
where we used the fact that $|\chi_r^-(v,w)-\chi_r^+(v,w)|$ is a function bounded by~$1$ and supported in the set entering the left-hand side of~\eqref{4.2}. Now note that,  for $\varkappa\le\frac12$, the mapping  $v\mapsto v+\tilde\varPhi(v+w)$ is a bi-Lipschitz homeomorphism of~$\EE$. Therefore, denoting by~$\delta(f)$ the expression under the absolute value in~\eqref{4.011} and using, for instance, Theorem~3.2.5 in~\cite{federer1969} to make a change of variable, we obtain
\begin{align}
\delta(f)
&\le \int_{\EE} f(v+w)\frac{\chi_r^+(\Theta_w(v),w)\rho(\Theta_w(v))}{\det\bigl(I+(D\tilde\varPhi)(\Theta_w(v)+w)\bigr)}\,\dd v\notag\\
&\qquad -\int_{\EE} f(v+w)\chi_r^+(v,w)\rho(v)\dd v
+C_1 r^\gamma\notag\\
&\le \int_B f(v+w)|\Delta(v,w)|\dd v +C_1 r^\gamma,
\label{2.020}
\end{align}
where $B\subset\EE$ is a large ball containing the supports of~$\rho$ and~$\rho\circ\Theta_w$, and we set
$$
\Delta(v,w)=\frac{\chi_r^+(\Theta_w(v),w)\rho(\Theta_w(v))}{\det\bigl(I+(D\tilde\varPhi)(\Theta_w(v)+w)\bigr)}-\chi_r^+(v,w)\rho(v). 
$$
Since $v\mapsto\Theta_w(v)$ is a $2$-Lipschitz map satisfying the inequality $|\Theta_w(v)-v|\le\varkappa$ for all $v\in\EE$, we have $|\Delta(v,w)|\le C_2\varkappa(1+r^{-1})$. Substituting this into~\eqref{2.020}, we derive 
\begin{equation*}
\delta(f)\le C_3\varkappa (1+r^{-1})+C_1r^\gamma. 
\end{equation*}
Choosing $r=\varkappa^{1/(1+\gamma)}$, we get
\begin{equation} \label{4.7}
\delta(f)\le C_4\varkappa^{\gamma/(1+\gamma)}. 
\end{equation}
A similar argument shows that $\delta(f) \ge  -C_5\varkappa^{\gamma/(1+\gamma)}$. Combining this with~\eqref{4.7}, we arrive at inequality~\eqref{4.011} with $\beta=\frac{\gamma}{1+\gamma}$. This completes the proof of the theorem. 
\end{proof}

\subsection{Approximate right inverse of linear operators with dense image}
\label{ss-right-inverse}
Let~$F$ and~$H$ be separable Hilbert spaces and let $A:F\to H$ be a continuous linear operator. Consider the equation $A\zeta=f$. In general, it does not have a solution, and when it does, the solution may not be unique. The following result shows that, under some additional conditions, one may construct an approximate solution that linearly depends on~$f$. 

\begin{proposition} \label{l2.4}
In addition to the above hypotheses, let the image of~$A$ be dense in~$H$ and let~$V$ be a Banach space compactly embedded into~$H$. Then for any $\e>0$ there is a continuous linear operator $R_\e:H\to F$ with a finite-dimensional range such that 
\begin{equation} \label{2.19}
\|AR_\e f-f\|_H\le \e\|f\|_V\quad\mbox{for any $f\in V$}. 
\end{equation}
\end{proposition}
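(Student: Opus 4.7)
The strategy is to factor the problem into two independent approximation steps, using the compactness of the embedding $V\hookrightarrow H$ to replace elements of $V$ by vectors in a suitable finite-dimensional subspace of $H$, and then using the density of $\mathop{\mathrm{Im}}A$ in $H$ to invert $A$ approximately on that finite-dimensional subspace. The operator $R_\varepsilon$ will be defined explicitly as a composition of an orthogonal projection on $H$ with a finite-dimensional lift into $F$.

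\medskip

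First, I would build a finite-dimensional subspace on which $V$ is well approximated. Since $V$ embeds compactly in $H$, the unit ball $B_V$ is totally bounded as a subset of $H$, so there exists a finite set $g_1,\dots,g_N\in H$ that is an $(\varepsilon/2)$-net for $B_V$ in the $H$-norm. Let $H_N=\lspan\{g_1,\dots,g_N\}$, pick an orthonormal basis $e_1,\dots,e_N$ of $H_N$, and let $P_N:H\to H_N$ denote the orthogonal projection. For any $f\in V$ with $\|f\|_V\le 1$ there is an index $j$ with $\|f-g_j\|_H\le\varepsilon/2$, so
\begin{equation*}
\|f-P_Nf\|_H=\dist_H(f,H_N)\le\|f-g_j\|_H\le\tfrac{\varepsilon}{2}.
\end{equation*}
By homogeneity this gives $\|f-P_Nf\|_H\le(\varepsilon/2)\|f\|_V$ for all $f\in V$.

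\medskip

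Second, I would approximately invert $A$ on the basis vectors of $H_N$. The image of $A$ is dense in $H$, so, denoting by $C_0$ the norm of the embedding $V\hookrightarrow H$, for each $j=1,\dots,N$ one can choose $\zeta_j\in F$ such that $\|A\zeta_j-e_j\|_H\le\varepsilon/(2\sqrt{N}\,C_0)$. Set
\begin{equation*}
R_\varepsilon f=\sum_{j=1}^N\langle f,e_j\rangle_H\,\zeta_j,\qquad f\in H.
\end{equation*}
This is a continuous linear operator from $H$ to $F$ with range contained in $\lspan\{\zeta_1,\dots,\zeta_N\}$, hence of finite dimension. Since $P_Nf=\sum_j\langle f,e_j\rangle_H e_j$, one has $AR_\varepsilon f-P_Nf=\sum_j\langle f,e_j\rangle_H(A\zeta_j-e_j)$, and by the Cauchy--Schwarz inequality together with $\sum_j|\langle f,e_j\rangle_H|\le\sqrt{N}\,\|f\|_H\le\sqrt{N}\,C_0\|f\|_V$ for $f\in V$, we get $\|AR_\varepsilon f-P_Nf\|_H\le(\varepsilon/2)\|f\|_V$. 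Combining this with the first step via the triangle inequality yields the required bound~\eqref{2.19}.

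\medskip

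I do not expect any serious obstacle: the two ingredients—compactness of the embedding and density of the image—are used in essentially orthogonal ways, and the only mild subtlety is to quantify the approximation of the $e_j$ by $A\zeta_j$ precisely enough that the constants arising from expanding in the orthonormal basis of $H_N$ are absorbed. If one wanted to avoid an $N$-dependent choice of tolerance, one could instead choose the $\zeta_j$ so that $\|A\zeta_j-e_j\|_H$ is dominated by the singular values of the projection, but the elementary estimate above already suffices.
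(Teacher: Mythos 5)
Your argument is correct: the compactness of the embedding gives a finite $\varepsilon/2$-net for $B_V$ in $H$, hence a finite-dimensional subspace $H_N$ with $\|f-P_Nf\|_H\le(\varepsilon/2)\|f\|_V$ by homogeneity, and the density of $\mathop{\rm Im}\nolimits A$ lets you pick approximate preimages $\zeta_j$ of an orthonormal basis of $H_N$; the resulting finite-rank operator $R_\varepsilon f=\sum_j\langle f,e_j\rangle_H\,\zeta_j$ is bounded, and your triangle-inequality estimate gives \eqref{2.19}. This is, however, a genuinely different route from the paper's. The paper defines $G=AA^*$, uses the Tikhonov-type regularisation $R_\gamma=A^*(G+\gamma I)^{-1}$ (Lemma~\ref{l2.7}: $\|G(G+\gamma)^{-1}\|\le1$, $\|(G+\gamma)^{-1}\|\le\gamma^{-1}$, and $G(G+\gamma)^{-1}f\to f$ when $G$ has dense image), and only then truncates by a projection $\mathsf P_M$ in $F$, using compactness of $B_V(1)$ in $H$ to make the convergence uniform. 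Your construction is more elementary and shorter, but it rests on arbitrary, non-canonical choices of the preimages $\zeta_j$, whereas the paper's formula $\mathsf P_MA^*(G+\gamma)^{-1}$ is canonical in $A$: it is exactly this structure (continuity/analyticity in the parameter $\eta$, monotonicity of $\|G(G+\gamma)^{-1}f-f\|$ in $\gamma$) that is exploited in the parametric version, Theorem~\ref{p2.5}, which Proposition~\ref{l2.4} is designed to prepare. So for the proposition as stated your proof is perfectly adequate, while the paper's heavier machinery buys the uniformity and regularity in $(u,\eta)$ needed later.
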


\begin{proof}
Let us define the operator $G=AA^*:H\to H$. Since the image $\Im(A)$ is dense in~$H$, the kernel of the self-adjoint operator~$G$ is trivial, and therefore the image~$\Im(G)$ is dense in~$H$. Let us recall that the operator~$A^*G^{-1}$ defined on~$\Im(G)$ is called the {\it Moore--Penrose pseudo-inverse\/} and singles out the solution of the least norm for the equation $A\zeta=f$ (when it exists). We shall need the following lemma giving a natural approximation of the right inverse of~$A$. 

\begin{lemma} \label{l2.7}
Let $G:H\to H$ be a non-negative self-adjoint operator. Then the mapping $(0,+\infty)\ni \gamma\mapsto(G+\gamma)^{-1}$ is a well-defined smooth operator function such that $\Delta_f(\gamma):=\|G(G+\gamma)^{-1}f-f\|^2$ decreases with $\gamma$ for every $f\in H$. Moreover, the norms of the operators $G(G+\gamma)^{-1}$ and $(G+\gamma)^{-1}$ are bounded, respectively, by~$1$ and~$\gamma^{-1}$, and if~$G$ has dense image, then 
	\begin{equation} \label{2.034}
		\lim_{\gamma\to0}\|G(G+\gamma)^{-1}f-f\|=0
		\quad\mbox{for any $f\in H$}. 
	\end{equation}
\end{lemma}

We now construct~$R_\e$ by truncating~$A^*(G+\gamma)^{-1}$ to ensure that the image is finite-dimensional. Namely, choosing an orthonormal basis~$\{f_j\}$ in~$F$ and denoting by~${\mathsf P}_M$ the orthogonal projection to the vector space spanned by the first~$M$ vectors, we define $R_{\gamma,M}={\mathsf P}_MA^*(G+\gamma)^{-1}$. We now fix any $\e>0$. By Lemma~\ref{l2.7}, for any $f\in H$ there is $\gamma_\e(f)>0$ such that
$$
\|A A^*(G+\gamma)^{-1}f-f\|\le\frac{\e}{3} 
\quad\mbox{for $0<\gamma\le\gamma_\e(f)$}. 
$$
Since the norm of the operator $A A^*(G+\gamma)^{-1}$ is bounded by~$1$, for any $f\in H$ there is $\delta_\e(f)>0$ such that 
\begin{equation} \label{2.0024}
\|A A^*(G+\gamma)^{-1}g-g\|\le\frac{2\e}{3} 
\quad\mbox{for $0<\gamma\le\gamma_\e(f)$, $\|g-f\|\le\delta_\e(f)$}. 	
\end{equation}
The open balls $\{O_f:=\dot B_H(f,\delta_\e(f))\}_{f\in H}$ form a covering of the compact set $B_V(1)\subset H$. Choosing a finite sub-covering $\{O_{f_j},1\le j\le m\}$ and setting $\gamma_\e:=\min\{\gamma_\e(f_j),1\le j\le m\}$, we derive from~\eqref{2.0024} that 
$$
\|A A^*(G+\gamma_\e)^{-1}f-f\|\le\frac{2\e}{3} 
\quad\mbox{for $f\in B_V(1)$}. 	
$$
Since the sequence $\{AR_{\gamma_\e,M}\}_{M\ge1}$ converges to $A A^*(G+\gamma_\e)^{-1}$ as $M\to\infty$ in the strong operator topology and the convergence is uniform on compact subsets, we can find $M_\e\ge1$ such that 
$$
\|AR_{\gamma_\e,M_\e}f-f\|_H\le\e\quad\mbox{for $f\in B_V(1)$}.
$$
By homogeneity, this implies~\eqref{2.19} with $R_\e=R_{\gamma_\e,M_\e}$. 
\end{proof}

\begin{proof}[Proof of Lemma~\ref{l2.7}]
Since $G\ge0$, it follows that $(G+\gamma I)^{-1}$ is well defined and smooth in~$\gamma>0$. When proving that~$\Delta_f$ decreases with~$\gamma$, we can assume, by the spectral theorem, that~$G$ acts in a Lebesgue space $L^2(\XXXX,\lambda)$ as the  multiplication by a bounded non-negative function~$a(x)$. In this case, 
$$
\Delta_f(\gamma)=\|G(G+\gamma I)^{-1}f-f\|^2=\int_\XXXX \frac{\gamma^2|f(x)|^2}{(a(x)+\gamma)^2}\,\lambda(\dd x).
$$
It remains to note that the integrand is an increasing function of~$\gamma>0$. 

The above representation of the operator $G$ readily implies the assertions concerning the norms of $G(G+\gamma)^{-1}$ and $(G+\gamma)^{-1}$. To prove~\eqref{2.034} for operators with a dense image, we first note that 
	\begin{equation} \label{2.039}
	G(G+\gamma)^{-1}-I=-\gamma(G+\gamma)^{-1}.	
	\end{equation} 
	Since the norm of the operator $\gamma(G+\gamma)^{-1}$ is bounded by~$1$, it suffices to prove that the right-hand side of~\eqref{2.039} goes to zero for a dense subset of vectors~$f\in H$. Let us take any~$f$ in the (dense) image of~$G$. Then there is $h\in H$ such that $f=Gh$, so that 
$$
\|\gamma(G+\gamma)^{-1}f\|\le \gamma\|h\|+\gamma^2\|(G+\gamma)^{-1}h\|\le 2\gamma \|h\|.
$$
This implies~\eqref{2.034} and completes the proof. 
\end{proof}

In what follows, we shall need a version of Proposition~\ref{l2.4} for the case when the operator~$A$ depends on a parameter and degenerates for some of its values. Namely, let~$X$ be a compact metric space, let~$E$ be a separable Hilbert space, and let~$\ell\in\PP(E)$ be a Borel measure with a compact support~$\KK$. Consider a continuous mapping  $A:X\times E\to\LL(F,H)$ such that $A(u,\cdot):E\to \LL(F,H)$ is analytic for any $u\in X$ (we recall~\eqref{analytic} and the convention for analytic mappings with parameter). As in Proposition~\ref{l2.4}, we denote by~$V$ a Banach space compactly embedded into~$H$. Finally, we fix an orthonormal basis~$\{f_j\}$ in~$F$ and denote by~$F_N$ the vector span of $f_1,\dots,f_N$. A proof of the following result will be given in Section~\ref{s2.3}.

\begin{theorem} \label{p2.5}
In addition to the above hypotheses, let us assume that, for any $u\in X$, there is a set of full measure $\KK^u\subset\KK$ such that the image of the linear operator $A(u,\eta)$ is dense in~$H$ for any $\eta\in\KK^u$. Then,  for any $\bfe=(\e_1,\e_2)\in(0,1)^2$, there is an integer $M_\bfe\ge1$, positive numbers $\nu_{\e_2},C_\bfe$, and a non-negative continuous function $\FFFF_\bfe(u,\eta)$, defined on~$X\times E$ and analytic in~$\eta$, such that the following properties hold. 
\begin{description}
\item[Bound on the measure.]
The $\ell$-measure of the sets
\begin{equation} \label{2.23}
\KK_\bfe^{u}:=\{\eta\in\KK: \FFFF_\bfe(u,\eta)\le\nu_{\e_2}\}
\end{equation} 
satisfies the inequality
\begin{equation}
	\ell(\KK_{\bfe}^{u})\ge 1-\e_1\quad	\mbox{for $u\in X$}.
\label{2.20}
\end{equation}
\item[Right inverse.] 
Let us define the compact set
\begin{equation} \label{2.024}
\DD_\bfe=\{(u,\eta)\in X\times \KK: \FFFF_\bfe(u,\eta)\le2\nu_{\e_2}\}. 
\end{equation} 
Then there is a continuous mapping $R_\bfe:\DD_{\bfe}\to\LL(H,F)$ such that
\begin{gather}
\Im\bigl(R_\bfe(u,\eta)\bigr)\subset F_{M_\bfe}, \quad 
\|R_\bfe(u,\eta)\|_{\LL(H,F)}\le C_\bfe
\quad\mbox{for $(u,\eta)\in \DD_\bfe$},
\label{2.21}\\
\|A(u,\eta)R_\bfe(u,\eta)f-f\|_H\le \e_2\|f\|_V
\quad\mbox{for $(u,\eta)\in \DD_\bfe$, $f\in V$}. 
\label{2.22}
\end{gather}
\end{description}
\end{theorem}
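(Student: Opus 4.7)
The plan is to build the approximate right inverse by Moore--Penrose regularisation followed by a finite-dimensional truncation in $F$, and to encode its failure in a single scalar quantity $\FFFF_\bfe(u,\eta)$ which is analytic in $\eta$. The density of $\Im A(u,\eta)$ for $\eta\in\KK^u$ will make $\FFFF_\bfe(u,\cdot)$ small in $\ell$-measure, and the required uniformity in $u\in X$ will come from Dini's theorem applied to an \emph{integrated} form of the error, which is monotone in the regularisation parameter and has a continuous limit.

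For $\gamma>0$ and $M\in\N$, set $G(u,\eta):=A(u,\eta)A(u,\eta)^*$ and $R_{\gamma,M}(u,\eta):={\mathsf P}_M A(u,\eta)^*(G(u,\eta)+\gamma I)^{-1}$. Since $A$ is analytic in $\eta$, so is $A^*$ (by expanding $\langle A^*h,f\rangle=\langle h,Af\rangle$ termwise); the resolvent of $G+\gamma I$ at $0$ is analytic because $G+\gamma I\ge\gamma I$; finite-rank truncation preserves analyticity. Thus $R_{\gamma,M}$ is continuous on $X\times\KK$, real analytic in $\eta$, has range in $F_M$, and $\|R_{\gamma,M}\|\le 1/(2\sqrt{\gamma})$ since $\|A^*(G+\gamma I)^{-1}\|^2=\sup_{\lambda\ge 0}\lambda(\lambda+\gamma)^{-2}=(4\gamma)^{-1}$. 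Using the compactness of $V\hookrightarrow H$, pick a finite $\rho$-net $\{v_k\}_{k=1}^K\subset B_V(1)$ in the $H$-topology and define
$$
\FFFF_\bfe(u,\eta)=\sum_{k=1}^{K}\|A(u,\eta)R_{\gamma_\bfe,M_\bfe}(u,\eta)v_k-v_k\|_H^2,
$$
which is polynomial in the analytic operator $R_{\gamma_\bfe,M_\bfe}(u,\cdot)$, hence analytic in $\eta$ and jointly continuous. For \eqref{2.22}, if $\FFFF_\bfe(u,\eta)\le 2\nu_{\e_2}$, then $\|AR v_k-v_k\|_H\le\sqrt{2\nu_{\e_2}}$ for every $k$; choosing the nearest $v_k$ to a given $f\in B_V(1)$ yields $\|ARf-f\|_H\le(\|A\|_\infty\|R\|+1)\rho+\sqrt{2\nu_{\e_2}}$, which is $\le\e_2$ once $\nu_{\e_2}=\e_2^2/8$ and $\rho$ is small enough.

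The main work is the uniform measure bound~\eqref{2.20}. Put $h_k^\gamma(u,\eta):=\|G(G+\gamma I)^{-1}v_k-v_k\|_H^2$; by Lemma~\ref{l2.7} together with the density hypothesis on $\Im A(u,\eta)$, for every $u\in X$ and every $\eta\in\KK^u$ the function $\gamma\mapsto h_k^\gamma(u,\eta)$ is monotone and tends to $0$ as $\gamma\to 0^+$. Since $h_k^\gamma$ is jointly continuous and uniformly bounded on $X\times\KK$, the integrated quantity $I_k(u,\gamma):=\int_\KK h_k^\gamma(u,\eta)\,\ell(d\eta)$ is continuous in $u$ and, by monotone convergence, decreases to $0$ as $\gamma\to 0^+$ for every $u\in X$; the limit being the continuous function $0$, Dini's theorem on the compact $X$ yields $\sup_{u\in X}I_k(u,\gamma)\to 0$. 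For the truncation, decompose
$$
A(u,\eta)R_{\gamma,M}(u,\eta)v_k-v_k=\bigl(G(G+\gamma)^{-1}v_k-v_k\bigr)-A(I-{\mathsf P}_M)A^*(G+\gamma)^{-1}v_k;
$$
the set $\{A(u,\eta)^*(G(u,\eta)+\gamma)^{-1}v_k:(u,\eta)\in X\times\KK\}$ is a continuous image of a compact, hence compact in $F$, so the truncation term vanishes as $M\to\infty$ uniformly in $(u,\eta)$. Combining these two observations with Markov's inequality and the triangle inequality, after choosing $\gamma_\bfe$ sufficiently small and then $M_\bfe$ sufficiently large, we get $\ell\{\FFFF_\bfe(u,\cdot)>\nu_{\e_2}\}\le\e_1$ uniformly in $u\in X$.

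The principal obstacle is precisely this uniformity in $u$: a direct attempt to estimate $\ell\{\FFFF_\bfe(u,\cdot)>\nu_{\e_2}\}$ via pointwise DCT on the fibers only yields a lower-semi-continuous dependence on $u$, which on the compact $X$ need not be uniform, and a patching over $X$ by partitions of unity conflicts with the requirement that $\FFFF_\bfe$ be analytic in $\eta$. The rescue is to pass to the \emph{integrated} quantity $I_k(u,\gamma)$, whose monotone decrease to the continuous limit $0$ activates Dini's theorem on $X$. A secondary nuisance is the circular coupling between $\gamma_\bfe$, the net density $\rho$, the cardinality $K_\bfe$, and the norm of $R_{\gamma_\bfe,M_\bfe}$, which is resolved by a sequential parameter choice: first fix a trial $\gamma_\bfe$ (thereby determining $\rho$ and $K_\bfe$), and then decrease $\gamma_\bfe$ further to sharpen the measure bound, leaving $K_\bfe$ valid.
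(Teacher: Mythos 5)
Your construction is the same as the paper's: the regularised pseudo-inverse $R_{\gamma,M}={\mathsf P}_MA^*(AA^*+\gamma I)^{-1}$, a scalar functional $\FFFF_\bfe$ built from the errors on a finite net of $B_V(1)$, and the split into a measure bound plus an approximation bound. Your route to the uniformity in $u$ is a genuine (and clean) variant: you integrate the pointwise errors $h_k^\gamma$ over $\KK$, so that $u\mapsto I_k(u,\gamma)$ is continuous and monotone in $\gamma$, and classical Dini plus Markov give the uniform bound; the paper instead works directly with $m_\gamma(u)=\ell\{\FFFF_\gamma(u,\cdot)<\nu\}$, proves it is lower semicontinuous via the portmanteau theorem, and invokes a Dini theorem for monotone families of lower semicontinuous functions. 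Both are valid, and yours avoids the semicontinuity subtlety.

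There is, however, a genuine gap in your parameter bookkeeping for~\eqref{2.22}. You estimate the off-net error by $\|A\|_\infty\|R_{\gamma,M}\|\,\rho$ with $\|R_{\gamma,M}\|\le(2\sqrt\gamma)^{-1}$, which forces the net fineness $\rho$ (hence $K_\bfe$ and the functional $\FFFF_\bfe$ itself) to depend on $\gamma$; but the measure bound requires $\gamma$ to be chosen small \emph{after} the net is fixed, since Dini gives no rate. Your proposed resolution --- fix a trial $\gamma_\bfe$, determine $\rho$ and $K_\bfe$, then ``decrease $\gamma_\bfe$ further, leaving $K_\bfe$ valid'' --- does not close the circle: decreasing $\gamma$ inflates $\|R_{\gamma,M}\|$ like $\gamma^{-1/2}$, so the previously chosen $\rho$ no longer yields $(\|A\|_\infty\|R\|+1)\rho\le\e_2/2$, and \eqref{2.22} is lost. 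The way out is the one used in the paper: estimate the off-net term with the \emph{untruncated} operator, for which $\|A R_\gamma\|=\|G(G+\gamma I)^{-1}\|\le1$ uniformly in $\gamma$, so that an $(\e_2/4)$-net depending only on $\e_2$ suffices; then treat the truncation error $A(I-{\mathsf P}_M)R_\gamma f$ separately, making it uniformly small over the compact set $\DD_\bfe\times B_V(1)$ (continuity of $(u,\eta,f)\mapsto R_\gamma(u,\eta)f$ plus ${\mathsf P}_M\to I$ uniformly on compacts) by choosing $M$ last. The correct order is: net from $\e_2$ alone, then $\gamma$ from the uniform measure bound, then $M$ from the truncation estimate; with this modification your argument goes through.
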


\subsection{Proof of Proposition~\ref{P:2.5}}
\label{ss-proof-of-P2.5}

We first outline the main idea. We seek a vector $\zeta\in E$ depending on~$u$, $u'$, and~$\eta$ such that 
\begin{equation} \label{3.16}
\|S(u,\eta)-S(u',\eta+\zeta)\|\le \theta\|u-u'\|,
\end{equation}
provided that $\|u-u'\|$ is sufficiently small. Since~$S$ is a $C^2$-function of its arguments, we can write
\begin{equation} \label{3.17}
S(u',\eta+\zeta)=S(u,\eta)+(D_uS)(u,\eta)(u'-u)+(D_\eta S)(u,\eta)\zeta+r(u,u',\eta,\zeta),
\end{equation}
where~$r$ is a remainder term of order $\|u-u'\|^2+\|\zeta\|^2$. We see that a good choice of~$\zeta$ would be defining it as a solution of the equation
$$
(D_\eta S)(u,\eta)\zeta=-(D_uS)(u,\eta)(u'-u). 
$$
This equation is not necessarily solvable. However, by Hypothesis~\hyperlink{H3}{(H$_3$)}, the image of $(D_\eta S)(u,\eta)$ is dense in~$H$. By Theorem~\ref{p2.5}, for any $\e>0$, there exists an approximate right inverse $R_\e(u,\eta):H\to E$ such that
\begin{equation} \label{3.18}
\|(D_\eta S)(u,\eta)R_\e(u,\eta)f-f\|\le \e\|f\|_V
\quad\mbox{for any $f\in V$},
\end{equation}
where $V$ is the Banach space in~\hyperlink{H1}{(H$_1$)}. The mapping $(D_uS)(u,\eta)$ is continuous from~$H$ to~$V$, so that in~\eqref{3.18} we can take $f=-(D_uS)(u,\eta)(u'-u)$. We shall show that, for a sufficiently small $\e=\e(\theta)>0$, the mapping 
\begin{equation} \label{3.020}
\varPhi^{u,u'}(\eta)=-R_\e(u,\eta)(D_uS)(u,\eta)(u'-u)	
\end{equation}
satisfies all required properties. Let us turn to an accurate proof, which is divided into four steps. 

\medskip
{\it Step~1. Construction of~$\varPhi$}. 
Let us fix a small parameter $\e>0$ that will be chosen later. Let $\nu_\e$, $\KK_\e^{u}$, $\DD_\e$, $\FFFF_\e$, and~$R_\e$ be the objects described in Theorem~\ref{p2.5} with $\bfe=(\e,\e)$, $E=F$, and $A(u,\eta)=(D_\eta S)(u,\eta)$. We now construct the sets $\KK^{u,\sigma,\theta}\subset\KK^u$ on which~$\varPhi^{u,u'}(\cdot)$ will be defined by~\eqref{3.020}. The main point is to choose them in such a way that inequality~\eqref{4.2} is true with $\KK_1=\KK^{u,\sigma,\theta}$ (so that we can apply Theorem~\ref{T:4.1} to prove~\eqref{3.8}). 

Let us recall that $\KK_\e^{u}$, $\DD_\e$, and $R_\e$ have the form
\begin{align}
	\KK_\e^{u}&=\{\eta\in\KK:\FFFF_\e(u,\eta)\le\nu_\e\}
	\quad\mbox{for $u\in X$},\label{KE}\\
	\DD_\e&=\{(u,\eta)\in X\times \KK:\FFFF_\e(u,\eta)\le2\nu_\e\}, 
	\label{DE}\\
	R_\e(u,\eta)&={\mathsf P}_MR_{\gamma}(u,\eta)
	\quad\mbox{for $(u,\eta)\in\DD_\e$},\label{RE}
\end{align}
where the operator $R_\gamma$ is defined in~\eqref{approxinverse}, and the number $\gamma=\gamma(\e)>0$ and the integer~$M=M(\e)\ge1$ are chosen appropriately. Let us denote by~$\DD_\e'$ the projection of~$\DD_\e$ to the space $X\times E_{M}^\bot$. In other words, 
$$
\DD_\e'=\{(u,w)\in X\times E_{M}^\bot:\mbox{there is $v\in E_{M}$ such that $w+v\in\KK_\e^{u}$}\}. 
$$
Since~$\DD_\e$ is compact, so is its projection~$\DD_\e'$. We shall need the following lemma established at the end of this subsection.

\begin{lemma} \label{l3.3}
	The set $\DD_\e'$ can be represented as the disjoint union of finitely many sets $\DD_1',\dots,\DD_m'$ such that the following property holds: for any integer $l\in[1,m]$ there is $\nu_l\in(\nu_\e,3\nu_\e/2)$ such that, for any $(u,w)\in\overline{\DD_l'}$, the function $v\mapsto\FFFF_\e(u,w+v)-\nu_l$ is not identically zero. 
\end{lemma}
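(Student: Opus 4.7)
The plan is to show that a single piece already suffices: take $m=1$, $\DD_1':=\DD_\e'$, and let $\nu_1$ be any number in $(\nu_\e,3\nu_\e/2)$. The whole claim then reduces to verifying the following assertion on the closure: for every $(u,w)\in\overline{\DD_\e'}$ the function $g_{u,w}(v):=\FFFF_\e(u,w+v)$ attains a value $\le\nu_\e$ somewhere on $E_M$. Once that is known, at the corresponding $v$ we have $g_{u,w}(v)-\nu_1\le\nu_\e-\nu_1<0$, so $g_{u,w}-\nu_1$ cannot vanish identically.

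The key step is to propagate the defining property ``$\exists v\in E_M$ with $w+v\in\KK_\e^u$'', which holds by construction on $\DD_\e'$, to its closure. I would pick a sequence $(u_n,w_n)\in\DD_\e'$ with $(u_n,w_n)\to(u,w)$, and for each $n$ select $v_n\in E_M$ with $\eta_n:=w_n+v_n\in\KK_\e^{u_n}\subset\KK$. Since $\KK$ is compact, we may pass to a subsequence with $\eta_n\to\eta^*\in\KK$, which gives $v_n=\eta_n-w_n\to\eta^*-w=:v^*$; the limit $v^*$ lies in $E_M$ because this subspace is finite-dimensional, hence closed. Joint continuity of $\FFFF_\e$ then delivers $\FFFF_\e(u,w+v^*)\le\nu_\e$, as required.

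The main point of care, and arguably the only obstacle, is a small discrepancy in how $\DD_\e'$ is introduced in the text: it is described both as the projection of $\DD_\e=\{\FFFF_\e\le 2\nu_\e\}$ and, ``in other words'', via the tighter set $\KK_\e^u=\{\FFFF_\e\le\nu_\e\}$. The argument above relies on the tighter description, which leaves the safety margin $\nu_l-\nu_\e>0$ that makes the lemma essentially free. Under the weaker reading, the attainable constant values of $g_{u,w}$ could land anywhere in $[0,2\nu_\e]$ and in particular could meet every candidate $\nu_l\in(\nu_\e,3\nu_\e/2)$, so a genuine partition would be required; I would then exploit the analyticity of $\FFFF_\e$ in $\eta$ together with compactness of $X\times\KK$ to decompose $\overline{\DD_\e'}$ into finitely many pieces on which the range of $g_{u,w}$ (and in particular the set of attained constants) is confined to sub-intervals that leave an open gap inside $(\nu_\e,3\nu_\e/2)$, from which $\nu_l$ may be selected.
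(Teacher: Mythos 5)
Your argument is correct if one takes as the definition of~$\DD_\e'$ the displayed formula (the set of $(u,w)$ for which $w+v\in\KK_\e^{u}$ for some $v\in E_{M}$), and it is a genuinely different and simpler route than the paper's. The paper never uses the margin between the level $\nu_\e$ in~\eqref{KE} and the interval $(\nu_\e,3\nu_\e/2)$: for each fixed $(u,w)\in\DD_\e'$ it invokes Sard's theorem (the elementary remark that a function can be identically equal to at most one constant would already do) to select a value $\nu_{u,w}\in(\nu_\e,3\nu_\e/2)$ which $v\mapsto\FFFF_\e(u,w+v)$ does not identically attain, propagates this to a closed ball around $(u,w)$ by joint continuity of~$\FFFF_\e$ at a single witness point, extracts a finite subcover of the compact set~$\DD_\e'$, and disjointifies. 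That argument uses only compactness of~$\DD_\e'$ and continuity of~$\FFFF_\e$, so it is insensitive to whether $\DD_\e'$ is the projection of the $2\nu_\e$-sublevel set~$\DD_\e$ or of the $\nu_\e$-sublevel sets~$\KK_\e^u$. Your proof instead exploits the defining inequality $\FFFF_\e(u,w+v)\le\nu_\e$, passes it to limit points correctly (compactness of~$\KK$, closedness of~$E_{M}$, joint continuity of~$\FFFF_\e$; in fact $\DD_\e'$ is already compact under either reading, so the closure step is automatic), and concludes that $\FFFF_\e(u,w+\cdot)-\nu_1$ is strictly negative somewhere, hence not identically zero, with $m=1$ and an arbitrary $\nu_1$. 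This yields a shorter proof, and incidentally a closed set $\DD_1'=\DD_\e'$, which makes the set~\eqref{3.021} closed --- convenient for the subsequent application of Theorem~\ref{T:4.1}; moreover, the remaining steps of the proof of Proposition~\ref{P:2.5} go through unchanged under the tighter reading, so your choice of definition is consistent with the rest of the argument.

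The caveat is the discrepancy you flag yourself. If the intended definition is the projection of~$\DD_\e$ (the $2\nu_\e$-sublevel set), your $m=1$ argument no longer applies, and your fallback sketch is not yet a proof: one cannot hope to confine the \emph{range} of $g_{u,w}$ to subintervals (its range may well fill a large interval); what must be controlled is only whether $g_{u,w}$ is identically equal to the chosen level. Since each $g_{u,w}$ can be identically equal to at most one constant, the clean completion is exactly the paper's scheme: choose $\nu_{u,w}$ avoiding that constant, use continuity at one point $v_0$ with $|g_{u,w}(v_0)-\nu_{u,w}|>0$ to get a ball of parameters on which $g_{u',w'}-\nu_{u,w}$ is not identically zero, then cover~$\DD_\e'$ by finitely many such balls and disjointify. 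So your proof stands for the definition as displayed, while the paper's proof is the robust one that covers both readings.
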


We now set
\begin{equation} \label{3.021}
\KK^{u,\sigma,\theta}=\bigcup_{l=1}^m
\bigl\{\eta\in \KK:\eta=v+w, (u,w)\in\DD_l', \FFFF_\e(u,w+v)\le\nu_l\bigr\}	
\end{equation}
and introduce a function $\varPhi:X\times H\times E\to E$ as follows: $\varPhi^{u,u'}(\eta)$ is defined by~\eqref{3.020} if $\eta\in\KK^{u,\sigma,\theta}$, and $\varPhi^{u,u'}(\eta)=0$ otherwise. This is a measurable mapping with range in~$E_{M}$. We claim that~\eqref{3.7}--\eqref{3.9} hold for an appropriate choice of~$\e$.

\medskip
{\it Step~2. Proof of~\eqref{3.7}}. 
Up to now, the parameter~$\e>0$ was arbitrary. Let us choose it so small that $\e\le \sigma$. In view of~\eqref{2.20}, the required inequality will be established if we prove that  $\KK^{u,\sigma,\theta}\supset	\KK_\e^u$. But this relation immediately follows from~\eqref{KE} and~\eqref{3.021} since $\nu_l>\nu_\e$. 

\medskip
{\it Step~3. Proof of~\eqref{3.9}}. We first note that~\eqref{DE} and~\eqref{3.021} imply the inclusion $\KK^{u,\sigma,\theta}\subset \{\eta\in\KK:(u,\eta)\in\DD_\e\}$. In view of relations~\eqref{3.17} and~\eqref{3.18}, in which $\zeta=\varPhi^{u,u'}(\eta)$ and $f=-(D_uS)(u,\eta)(u'-u)$, for $u\in X$, $u'\in H$, and $\eta\in\KK^{u,\sigma,\theta}$, we have
\begin{multline} \label{3.19} 
\|S(u,\eta)-S(u',\eta+\varPhi^{u,u'}(\eta))\|\\
\le\e\,\|(D_uS)(u,\eta)(u'-u)\|_V+\|r(u,u',\eta,\varPhi^{u,u'}(\eta))\|. 
\end{multline}
Hypothesis~\hyperlink{H1}{(H$_1$)} implies that 
\begin{equation} \label{3.20}
\|(D_uS)(u,\eta)(u'-u)\|_V\le C_1\|u'-u\|,
\end{equation}
where we denote by~$C_i$ positive numbers not depending on~$\e$, $u$, $u'$, and~$\eta$. Furthermore, since~$S$ is a $C^2$ function whose second derivative is bounded on bounded subsets, for any~$\rho>0$ and $\|u\|+\|u'\|+\|\eta\|_E+\|\zeta\|_E\le\rho$, we have
\begin{equation} \label{3.21}
\|r(u,u',\eta,\zeta)\|\le C_2\bigl(\|u-u'\|^2+\|\zeta\|_E^2\bigr).
\end{equation}
Recalling the definition of~$\varPhi^{u,u'}$ (see~\eqref{3.020}) and using the inequality  in~\eqref{2.21}, we derive
\begin{equation} \label{3.23}
\|\varPhi^{u,u'}(\eta)\|\le C_3(\e)\|u-u'\|\quad
\mbox{for $u\in X$, $\eta\in \KK^{u,\sigma,\theta}$, $u'\in H$}.
\end{equation}
Substituting~\eqref{3.20}--\eqref{3.23} into~\eqref{3.19}, we obtain 
$$
\|S(u,\eta)-S(u',\eta+\varPhi^{u,u'}(\eta))\|
\le \bigl(C_4\e+C_5(\e)\|u-u'\|\bigr)\|u-u'\|.
$$
For a given $\theta\in(0,1)$, we can choose first~$\e$ and then~$\delta$ so  that $(C_4\e+C_5(\e)\delta)\le\theta$. We thus obtain~\eqref{3.9} for $(u,u')\in D_\delta$ and~$\eta\in\KK^{u,\sigma,\theta}$. 

\medskip
{\it Step~4. Proof of~\eqref{3.8}}. 
We shall use Theorem~\ref{T:4.1} with $\KK_1=\KK^{u,\sigma,\theta}$. Let us fix $(u,u')\in D_\delta$. The mapping $\varPhi^{u,u'}(\cdot):\KK\to E$ is measurable, and its image is contained in~$E_{M}$. By Hypothesis~\hyperlink{H4}{(H$_4$)}, the measure~$\ell$ can be written as the direct product of its projections to~$E_{M}$ and~$E_{M}^\bot$. Inequality~\eqref{3.8} will be established if we show that Properties~(a) and~(b) of Theorem~\ref{T:4.1} are true for $\varPhi^{u,u'}(\cdot)$ with $\varkappa=C\|u-u'\|$, where~$C$ is an absolute constant. 

By construction, $\varPhi^{u,u'}$ vanishes outside~$\KK^{u,\sigma,\theta}$. In view of~\eqref{3.23}, the first inequality in~\eqref{4.1} is satisfied. To prove\footnote{Notice that we cannot apply the mean value theorem, since we do not know if~$\KK^{u,\sigma,\theta}$ is convex.} the second, let us fix any smooth function~$h(t)$ that is equal to~$1$ for $|t|\le3\nu_\e/2$ and vanishes for $|t|\ge2\nu_\e$. Then we can write  
\begin{equation} \label{3.028}
	\varPhi^{u,u'}(\eta)
	=-h\bigl(\FFFF_\e(u,\eta)\bigr)R_\e(u,\eta)(D_uS)(u,\eta)(u'-u)
	\quad\mbox{for $\eta\in\KK^{u,\sigma,\theta}$}.
\end{equation}
The right-hand side of~\eqref{3.028} is obviously  locally Lipschitz in $\eta\in E$, with a Lipschitz constant proportional to~$\|u-u'\|$. It follows that~$\varPhi^{u,u'}$ satisfies the Lipschitz condition on $\KK^{u,\sigma,\theta}$ with a constant of the form~$C \|u-u'\|$. We have thus established~(a). 

\smallskip
To prove~(b), we denote by~$\KK^l$, $l=1,\dots,m$, the sets entering the right-hand side of~\eqref{3.021}. Let us note 
$$
\p_w\KK^{u,\sigma,\theta}=\bigcup_{l=1}^m\p_w\KK^l,
$$ 
so that it suffices to establish~\eqref{4.2} for each of the sets~$\KK^l$. Denoting by~$O_M$ a sufficiently large ball in~$E_M$, it is easy to see that  
\begin{align*}
\p_w\KK^l
&=\{v\in\KK^{u,\sigma,\theta}(w):\FFFF_\e(u,w+v)=\nu_l\}\\
&\subset\{v\in O_M:\FFFF_\e(u,w+v)-\nu_l=0\}.
\end{align*}
Now note that the analytic function $O_M\ni v\mapsto \FFFF_\e(u,w+v)-\nu_l$ does not vanish identically for $(u,w)\in\overline\DD_l$. An estimate for the neighbourhood of the set of zeros for such functions will be given in Corollary~\ref{T:4.3}. Applying the latter to the Lebesgue measure on~$E_{M}$, we conclude that~\eqref{4.2} is valid for~$\p_w\KK^l$. This completes the proof of Proposition~\ref{P:2.5}. 

\begin{proof}[Proof of Lemma~\ref{l3.3}]
	For any $(u,w)\in\DD_\e'$, the function $v\mapsto \FFFF_\e(u,w+v)$ is analytic on the finite-dimensional space~$E_{M}$. Therefore, by Sard's theorem, almost every real number is a regular value for it. Hence, we can find a number $\nu_{u,w}\in(\nu_\e,3\nu_\e/2)$ such that each of the functions $v\mapsto \FFFF_\e(u,w+v)-\nu_{u,w}$ is not identically equal to zero. By continuity, $v\mapsto \FFFF_\e(u',w'+v)-\nu_{u,w}$ will not vanish identically, provided that $(u',w')$ belongs to a non-degenerate closed ball~$B_{u,w}$ centred at~$(u,w)$. Since the corresponding open balls form a covering of the compact set~$\DD_\e'$, we can find a finite sub-covering $B_{u_j,w_j}$, $j=1,\dots,m$. We can now set $\DD_l'=(\DD_\e'\cap B_{u_l,w_l})\setminus \bigl(\,\cup_{j=1}^{l-1}\DD_j'\bigr)$, where the union is empty for $l=1$. 
\end{proof}

\section{Some auxiliary results}
\label{s-auxiliary-results}
In this section, we prove the theorem about the existence of a right inverse for a family of operators with dense image and establish an estimate for the measure of a tubular neighbourhood of a nodal set for holomorphic functions depending on a parameter. The latter was used in the proof of Theorem~\ref{p2.5} and will also be needed in Section~\ref{s6.2}.

\subsection{Proof of Theorem~\ref{p2.5}}
\label{s2.3}
We essentially repeat the proof of Proposition~\ref{l2.4}, following the dependence on the parameters. Namely, we set $G(u,\eta)=A(u,\eta)A(u,\eta)^*$, and given an integer $M\ge1$ and a number $\gamma>0$, define 
\begin{equation} \label{approxinverse}
R_\gamma(u,\eta)=A(u,\eta)^*\bigl(G(u,\eta)+\gamma I\bigr)^{-1}, \quad 
R_{M,\gamma}(u,\eta)={\mathsf P}_MR_\gamma(u,\eta),	
\end{equation}
where ${\mathsf P}_M:F\to F$ denotes the orthogonal projection to~$F_M$. We shall prove that, for any given $\bfe=(\e_1,\e_2)\in(0,1)^2$ and an appropriate choice of~$M$ and~$\gamma$, the operator~$R_{M,\gamma}$ possesses all the required properties. 

The fact that the image of~$R_{M,\gamma}$ is contained in the subspace~$F_M$ follows immediately from the definition. Furthermore, since the norm of the inverse $(G+\gamma I)^{-1}$ is bounded by~$\gamma^{-1}$, we have 
$$
\|R_{M,\gamma}(u,\eta)\|_{\LL(H,F)}\le C_1\gamma^{-1}, \quad 
C_1=\sup_{(u,\eta)\in X\times\KK}\|A(u,\eta)\|_{\LL(F,H)},
$$
so that the inequality in~\eqref{2.21} holds for any fixed~$M$ and~$\gamma$. Thus, we need to construct a function~$\FFFF_\bfe$ and to prove~\eqref{2.20} and~\eqref{2.22}.

Let us show that for any $\gamma>0$ there is an integer $M_{\gamma,\bfe}\ge1$ with the following property: if for some closed subset $\DD_\bfe\subset X\times\KK$ we have
\begin{equation} \label{2.036}
	\sup_{(u,\eta)\in\DD_\bfe}\|A(u,\eta)R_\gamma(u,\eta)f-f\|<\e_2
	\quad\mbox{for $f\in B_V(1)$},
\end{equation}	
then inequality~\eqref{2.22} with $R_\bfe=R_{M,\gamma}$ is true for $M\ge M_{\gamma,\bfe}$. Indeed, let us note that 
$$
\Delta_{M,\gamma}(u,\eta)f:=AR_{\gamma}f-AR_{M,\gamma}f
=A(u,\eta)(I-{\mathsf P}_M)R_{\gamma}(u,\eta).
$$
Since the mapping $(u,\eta,f)\mapsto R_\gamma(u,\eta)f$ acting from $X\times\KK\times H$ to~$H$ is continuous, the image of the compact set $\DD_\bfe\times B_V(1)$ is compact. The convergence of ${\mathsf P}_M$ to~$I$ in the strong operator topology implies that 
$$
\sup_{(u,\eta,f)\in \DD_\bfe\times B_V(1)}\|\Delta_{M,\gamma}(u,\eta)f\|\to0
\quad\mbox{as $M\to\infty$}. 
$$
Combining this with~\eqref{2.036}, we see that~\eqref{2.22} is true for $M\ge M_\gamma$. We thus need to construct a function~$\FFFF_\bfe$ and numbers~$ \gamma,\nu_{\e_2}>0$ for which~\eqref{2.20} and~\eqref{2.036} hold.

To this end, note that, by Lemma~\ref{l2.7}, for any $u\in X$, $\eta\in\KK^u$, and $f\in H$,
	\begin{equation} \label{2.0034}
		\lim_{\gamma\to0}\|A(u,\eta)R_{\gamma}(u,\eta)f-f\|=0. 
	\end{equation}
Let $\{f_j,1\le j\le N\}\subset H$ be an $(\e_2/4)$-net for the compact set $B_V(1)\subset H$. For $\gamma>0$, we define the continuous function
\begin{equation} \label{FG}
\FFFF_\gamma(u,\eta)=\sum_{j=1}^N\|A(u,\eta)R_\gamma(u,\eta)f_j-f_j\|^2	
\end{equation}
and notice that it is analytic in~$\eta$ and decreases with~$\gamma>0$ for each~$(u,\eta)$ in view of Lemma~\ref{l2.7}. Furthermore, setting $\nu_{\e_2}=\e_2^2/32$, we see that if $\FFFF_\gamma(u,\eta)\le2\nu_{\e_2}$, then for any $f\in B_V(1)$ we have 
\begin{align} 
	\|A(u,\eta)R_\gamma(u,\eta)f-f\|
	&\le \min_{1\le j\le N}
	\bigl(\|AR_\gamma(f-f_j)\|+\|f-f_j\|\bigr)+\FFFF_\gamma(u,\eta)^{1/2} \notag\\
	&\le 3\e_2/4,\label{2.037}
\end{align}
where we used the fact that the norm of the operator $G(G+\gamma I)^{-1}$ is bounded by~$1$. Suppose we have established the following lemma.

\begin{lemma} \label{l2.8}
	Under the hypotheses of Theorem~\ref{p2.5}, for any $\nu>0$ we have 
	\begin{equation} \label{UC}
 		\ell\bigl(\{\eta\in\KK:\FFFF_\gamma(u,\eta)<\nu\}\bigr)\to1\quad
 		\mbox{uniformly in $u\in X$ as $\gamma\to0^+$}. 
	\end{equation}
\end{lemma}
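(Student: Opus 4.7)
\medskip\noindent\textbf{Proof plan for Lemma~\ref{l2.8}.}
The strategy is to show that the $\ell$-integral $h_\gamma(u):=\int_\KK\FFFF_\gamma(u,\eta)\,\ell(\dd\eta)$ converges to zero uniformly on the compact set $X$ as $\gamma\to0^+$, and then to apply Markov's inequality. The uniform convergence will be obtained from Dini's theorem once the monotonicity and continuity of $h_\gamma$ are verified.

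First I would check that for each fixed $u\in X$ one has $\FFFF_\gamma(u,\eta)\to0$ as $\gamma\to0^+$ for $\ell$-almost every $\eta\in\KK$. This is immediate: by hypothesis $\ell(\KK^u)=1$, and for $\eta\in\KK^u$ the operator $A(u,\eta)$ has dense image, so $G(u,\eta)=A(u,\eta)A(u,\eta)^*$ has dense image as well. Relation~\eqref{2.034} of Lemma~\ref{l2.7} applied to each of the finitely many vectors $f_j$ then yields the required pointwise convergence of $\FFFF_\gamma(u,\cdot)$ to zero on $\KK^u$. Next I would note that $\FFFF_\gamma$ is uniformly bounded on $X\times\KK$: the norm of $G(G+\gamma I)^{-1}$ is at most $1$ by Lemma~\ref{l2.7}, whence $\|A R_\gamma f_j-f_j\|\le 2\|f_j\|$, so $\FFFF_\gamma(u,\eta)\le 4\sum_{j=1}^N\|f_j\|^2=:M_0$. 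Dominated convergence gives $h_\gamma(u)\to0$ pointwise on $X$ as $\gamma\to0^+$.

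The next step is to verify that $h_\gamma$ is continuous on the compact metric space $X$ for every fixed $\gamma>0$. Since $A:X\times E\to\LL(F,H)$ is continuous, so is $G=AA^*$, and for fixed $\gamma>0$ the map $T\mapsto(T+\gamma I)^{-1}$ is continuous on the set of non-negative self-adjoint operators (via the resolvent identity). Hence $R_\gamma(u,\eta)$, and therefore $\FFFF_\gamma(u,\eta)$, is a continuous function of $(u,\eta)$ on $X\times\KK$, bounded by $M_0$. Applying the dominated convergence theorem to sequences $u_n\to u$ (noting $\ell$ is a finite measure on the compact set $\KK$) shows that $h_\gamma$ is continuous on $X$. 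I would then record the monotonicity: by the scalar identity $\frac{\gamma^2}{(a+\gamma)^2}$ is increasing in $\gamma>0$ (this is the computation used to prove the monotonicity clause of Lemma~\ref{l2.7}), each summand in~\eqref{FG} is an increasing function of $\gamma$, so $\gamma\mapsto\FFFF_\gamma(u,\eta)$ is non-decreasing, and the same holds for $\gamma\mapsto h_\gamma(u)$.

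Combining these facts, the family $\{h_\gamma\}_{\gamma>0}$ is a monotone family of continuous functions on the compact space $X$ that decreases to $0$ pointwise as $\gamma\downarrow 0$. Dini's theorem (applied to any sequence $\gamma_n\downarrow 0$, with monotonicity handling values of $\gamma$ between consecutive $\gamma_n$) therefore yields
\begin{equation*}
\sup_{u\in X}h_\gamma(u)\longrightarrow 0\quad\text{as }\gamma\to 0^+.
\end{equation*}
By Markov's inequality,
\begin{equation*}
\ell\bigl(\{\eta\in\KK:\FFFF_\gamma(u,\eta)\ge\nu\}\bigr)\le \nu^{-1}h_\gamma(u),
\end{equation*}
so the left-hand side tends to $0$ uniformly in $u\in X$, which is equivalent to~\eqref{UC}. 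The subtlest point, and therefore the one I would write out most carefully, is the continuity of $h_\gamma$ in $u$, because that is what allows invoking Dini to upgrade the pointwise-a.e.\ convergence (that comes from the approximate controllability Hypothesis~\hyperlink{H3}{(H$_3$)}) to the uniform-in-$u$ statement required in the sequel.
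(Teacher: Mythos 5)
Your proof is correct, and it takes a genuinely different route from the paper. The paper works directly with the sublevel-set measure $m_\gamma(u)=\ell\bigl(\{\eta\in\KK:\FFFF_\gamma(u,\eta)<\nu\}\bigr)$: it shows $m_\gamma(u)\to1$ pointwise, proves that $m_\gamma$ is \emph{lower semicontinuous} in $u$ (by pushing $\ell$ forward under $\FFFF_\gamma(u,\cdot)$, checking weak continuity of $u\mapsto\mu_u$, and invoking the portemanteau theorem for the open set $(-\infty,\nu)$), establishes monotonicity in $\gamma$, and then applies a version of Dini's theorem valid for monotone families of lower semicontinuous functions. You instead integrate: you show $h_\gamma(u)=\int_\KK\FFFF_\gamma(u,\eta)\,\ell(\dd\eta)$ is continuous in $u$, monotone in $\gamma$, and tends to $0$ pointwise (dominated convergence, using the uniform bound $\FFFF_\gamma\le 4\sum_j\|f_j\|^2$), so the \emph{classical} Dini theorem gives $\sup_X h_\gamma\to0$, and Markov's inequality converts this into the uniform measure statement~\eqref{UC}. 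Your route is arguably more elementary — it avoids both the portemanteau argument and the semicontinuous refinement of Dini, at the modest cost of introducing the auxiliary integral and losing a factor $\nu^{-1}$ that is harmless here; the paper's route keeps the quantity of interest (the measure itself) in view throughout. All the ingredients you rely on (pointwise a.e.\ convergence of $\FFFF_\gamma$ on $\KK^u$, the bound $\|G(G+\gamma I)^{-1}\|\le1$, monotonicity from Lemma~\ref{l2.7}, joint continuity of $\FFFF_\gamma$ for fixed $\gamma$) are available exactly as you state them, so the argument goes through as written.
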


Applying this result with $\nu=\nu_{\e_2}$, we can find $\gamma=\gamma(\bfe)>0$ such that~\eqref{2.20} holds for the set~$\KK_\bfe^u$ defined by relation~\eqref{2.23} with $\FFFF_\bfe=\FFFF_{\gamma(\bfe)}$. It remains to note that inequality~\eqref{2.036} follows immediately from~\eqref{2.037}. This completes the proof of Theorem~\ref{p2.5}.

\begin{proof}[Proof of Lemma~\ref{l2.8}]
By assumption, the image of~$A(u,\eta)$ is dense for any $u\in X$ and $\eta\in\KK^u$. It follows from~\eqref{2.0034} that
\begin{equation*}
\FFFF_\gamma(u,\eta)\to0\quad\mbox{as $\gamma\to0$ for each $\eta\in\KK^u$}. 
\end{equation*}
Since $\ell(\KK^u)=1$ and the almost sure convergence implies convergence in probability, we see that the family of functions $\{m_\gamma:X\to[0,1]\}_{\gamma\in(0,1)}$ defined by 
\begin{equation} \label{2.038}
m_\gamma(u)=\ell\bigl(\{\eta\in\KK:\FFFF_\gamma(u,\eta)<\nu\}\bigr)
\end{equation}
converges to~$1$ as $\gamma\to0^+$ for any $u\in X$ and $\nu>0$. We need to prove that this convergence is uniform. Suppose we have established the following two properties:
\begin{description}
	\item[Semicontinuity.] 
	For any $\gamma\in(0,1)$, the function $m_\gamma:X\to[0,1]$ is lower semicontinuous.
	\item[Monotonicity.] 
	For any $\gamma_1\le \gamma_2$ and $u\in X$, we have $m_{\gamma_1}(u)\ge m_{\gamma_2}(u)$. 
\end{description}
In this case, the required uniform convergence follows from Dini's theorem for a sequence of increasing functions, which remains true when the functions are lower semicontinuous; see~\cite[Theorem~2.4.10]{dudley2002}.

Let us prove the semicontinuity. We denote by $\mu_u\in\PP(\R)$ the image of~$\ell$ under the mapping $\FFFF_\gamma(u,\cdot)$. Thus, for any bounded continuous function $g:\R\to\R$, we have 
$$
\int_\R g(r)\mu_u(\dd r)=\int_Eg\bigl(\FFFF_\gamma(u,\eta)\bigr)\ell(\dd\eta). 
$$
It follows from the Lebesgue theorem on dominated convergence that the function $u\mapsto \mu_u$ acting from~$X$ to the space~$\PP(\R)$ endowed with the weak topology is continuous. By the portemanteau theorem (see~\cite[Theorem~11.1.1]{dudley2002}), for any open set $O\subset\R$ the function $u\to \mu_u(O)$ is lower semicontinuous. It remains to note that $m_\gamma(u)=\mu_u(O)$ with $O=(-\infty,\nu)$. 

To prove the monotonicity, it suffices to note that, by Lemma~\ref{l2.7}, the function $\Delta_f(\gamma):=\|G(G+\gamma)^{-1}f-f\|^2$ decreases with~$\gamma$, so that the same is true for~$\FFFF_\gamma(u,\eta)$. The proof of the lemma is complete. 
\end{proof}

\subsection{Measure of a tubular neighbourhood of a nodal set}
\label{s2.4}
As before, let~$X$ be a compact metric space, let~$E$ be a separable Hilbert space, and let $\ell\in\PP(E)$ be a probability measure with a compact support~$\KK$. We assume that~$\ell$ is {\it decomposable\/} in the sense that there is an orthonormal basis~$\{e_j\}$ in~$E$ such that~$\ell$ can be represented as the direct of product its projections to the one-dimensional subspaces spanned by~$e_j$. Let us fix a bounded convex open set~$\OO\supset\KK$ and consider a continuous function $\FFFF:X\times \OO\to \R $ such that $\FFFF(u,\cdot): \OO\to \R $ is analytic and not identically zero for any $u\in X$.

\begin{lemma} \label{l2.6}
In addition to the above hypotheses, let us assume that the one-dimensional projections of~$\ell$ to the spaces~$\lspan(e_j)$ possess continuous densities with respect to the Lebesgue measure. Then there are positive numbers~$C$ and~$c$ such that, for any $u\in X$ and~$r\in[0,1]$,
\begin{equation} \label{2.25}
\ell\bigl(\{\eta\in\OO:|\FFFF(u,\eta)|\le r\}\bigr)\le C\,r^c.
\end{equation}
\end{lemma}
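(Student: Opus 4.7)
The plan is to reduce the claim to a finite-dimensional quantitative Remez-type (\L{}ojasiewicz-type) inequality for real-analytic functions, then to use decomposability of $\ell$ and compactness of $X$ and $\KK$ to globalize.

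\emph{Local reduction to finite dimensions.} Fix $u_0 \in X$. Since $\FFFF(u_0, \cdot)$ is real-analytic on $\OO$ and not identically zero, and since $\bigcup_N E_N$ (with $E_N := \lspan\{e_1, \ldots, e_N\}$) is dense in $E$, there exist an integer $N$ and a point $\eta^* \in \OO \cap E_N$ such that $\FFFF(u_0, \eta^*) \ne 0$. By joint continuity of $\FFFF$ on $X \times \OO$ and the boundedness convention~\eqref{analytic}, there exist a neighborhood $U$ of $u_0$ in $X$, a neighborhood $W$ of $0$ in $E_N^\perp$, and positive constants $m \le M$ such that
\begin{equation*}
|\FFFF(u, \eta^* + w)| \ge m \text{ on } U \times W \quad\text{and}\quad |\FFFF(u, \eta)| \le M \text{ on } X \times \KK,
\end{equation*}
with $\FFFF(u, \cdot)$ extending holomorphically to a fixed complex neighborhood of $\KK$ with supremum $\le M$ (possible because the derivatives of $\FFFF$ in $\eta$ are uniformly bounded on bounded sets, so the Taylor radius is uniformly bounded below).

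\emph{Fubini plus finite-dimensional Remez.} Decomposability writes $\ell = \ell_N \otimes \ell_N^\perp$ with $\ell_N$ of bounded continuous density on $E_N$; Fubini gives
\begin{equation*}
\ell\bigl(\{|\FFFF(u, \cdot)| \le r\}\bigr) = \int_{E_N^\perp} \ell_N\bigl(\{v \in E_N : |\FFFF(u, v+w)| \le r\}\bigr) \, \ell_N^\perp(dw).
\end{equation*}
For $(u, w) \in U \times W$, the analytic function $g_{u,w}(v) := \FFFF(u, v+w)$ on $E_N \cap (\OO - w)$ satisfies $|g_{u,w}(\eta^*)| \ge m$ and $\|g_{u,w}\|_\infty \le M$ on a complex neighborhood, so a Brudnyi--Ganzburg quantitative Remez-type inequality yields
\begin{equation*}
\ell_N\bigl(\{v \in E_N : |g_{u,w}(v)| \le r\}\bigr) \le C_1\, r^{c_1}, \qquad r \in [0, 1],
\end{equation*}
with $C_1, c_1 > 0$ depending only on $M/m$, $N$, and the density of $\ell_N$. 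In one variable this is proved by restricting $g_{u,w}$ to a complex line through $\eta^*$, bounding the number of zeros in a disk by $K = O(\log(M/m))$ via Jensen's formula, factoring them out as a polynomial factor, and applying the classical one-variable \L{}ojasiewicz estimate to the nonvanishing remainder; the $N$-variable version follows by Fubini over coordinate directions in $E_N$, exploiting the continuous density.

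\emph{Globalization and main obstacle.} For $w \notin W$, repeat the construction at each base point $(u_0, w_0)$ with $w_0$ in the projection of $\KK$ onto $E_N^\perp$: produce a possibly larger integer $N' \ge N$ and a new good point $\eta^*_{u_0, w_0} \in E_{N'}$ with $\FFFF(u_0, \eta^*_{u_0, w_0} + w_0) \ne 0$, and derive a local Remez bound there. The projection of $\KK$ onto $E_N^\perp$ is compact (continuous image of a compact set), so the product of $X$ with this projection is compact; finitely many such localizations cover it, and the minimum of finitely many positive exponents together with the maximum of the associated constants delivers the global bound~\eqref{2.25}. The main obstacle is orchestrating this globalization uniformly in $u$: the integer $N$ and the good point $\eta^*$ depend on the base point, so distinct base points call for distinct Fubini decompositions $E = E_{N'} \oplus E_{N'}^\perp$, and one must verify that the worst exponent across the cover remains strictly positive. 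This is guaranteed by the finiteness of the cover, which in turn rests on the compactness of both $X$ and the relevant projection of $\KK$.
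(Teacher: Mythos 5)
Your overall architecture --- pass to a finite-dimensional slice on which the analytic function is nontrivial, use decomposability and Fubini, prove a quantitative sublevel-set bound on the slice, and globalize by compactness --- is the same as the paper's. The genuine gap lies in the quantitative engine. Your Remez/Jensen route needs, uniformly over $u$ in a patch and over the translates $w$, a holomorphic extension of $\FFFF(u,\cdot)$ to a \emph{fixed} complex neighbourhood with a \emph{uniform} bound $M$; the justification you offer (``the derivatives of $\FFFF$ in $\eta$ are uniformly bounded on bounded sets, so the Taylor radius is uniformly bounded below'') does not follow from the hypotheses. Convention~\eqref{analytic} bounds each derivative $D^k_\eta\FFFF$ separately (uniformly in $u$); without joint geometric control in $k$ this gives no lower bound on the radius of convergence uniform in $u$, so the bound $M$, the zero count $K\sim\log(M/m)$ obtained from Jensen's formula, and hence your exponent $c_1$ are not under control as $u$ varies --- and uniformity in $u$ is precisely what the lemma asserts. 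The paper's proof is designed to avoid exactly this: at each base point $(u^0,\eta^0)$ it chooses one direction $e$ in a finite span along which $t\mapsto\FFFF(u^0,\eta^0+te)$ is not identically zero, fixes a single finite order $m$ with nonvanishing derivative, uses continuity of that one derivative in $(u,\eta,t)$ to get a locally uniform lower bound, and then applies the elementary one-dimensional estimate~\eqref{4.11} from~\cite{bakhtin-1986,eliasson-2002} together with Fubini; only finitely many derivatives are ever invoked, so the weak parameter-dependence hypotheses suffice.

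Two further points in your globalization would need repair. As written, your Fubini step fixes the splitting $E=E_N\oplus E_N^\perp$ with $N=N(u_0)$ and an anchor valid only near $w=0$; for a general $w_0$ in the projection of $\KK$ you enlarge to $N'=N'(u_0,w_0)$, but then the slice bound concerns affine slices parallel to $E_{N'}$, not $E_N$, and patches built on different splittings cannot simply be combined ``by finiteness of the cover'' --- one must, say, pass to the largest $N'$ over the finite cover, or, as the paper does, localize directly at points $(u^0,\eta^0)\in X\times\KK$ (where the case $\FFFF(u^0,\eta^0)\neq0$ is trivial and no projection bookkeeping arises). Also, your sketch of the $N$-variable Remez inequality by ``Fubini over coordinate directions'' fails as stated: the anchor $\eta^*$ lies on only one line in each coordinate direction, so on all other lines there is no lower bound and Jensen's formula gives nothing; one needs lines through the anchor (a Cartan-type polar argument) or an induction on dimension. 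None of this machinery is required by the paper's derivative-based argument.
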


As an immediate consequence of this lemma, we obtain the following estimate for the measure of a tubular neighbourhood of the nodal set for an analytic function. Namely, for any $u\in X$, we denote
$$
\NN(u) = \{\eta\in \OO: \FFFF(u,\eta) =0\}.
$$

\begin{corollary} \label{T:4.3}
Under the  hypotheses of Lemma~\ref{l2.6}, there are numbers $C,c>0$ such that, for any $u\in X$ and~$r\in[0,1]$,
\begin{equation}\label{4.8}
\ell\bigl(\{\eta\in\OO:\dist(\eta,\NN(u))\le r\}\bigr)\le C\,r^c.
\end{equation}
\end{corollary}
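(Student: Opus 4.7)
The plan is to deduce the tubular-neighbourhood estimate from the sublevel-set bound of Lemma~\ref{l2.6} via a uniform Lipschitz control of $\FFFF(u,\cdot)$ in a neighbourhood of~$\KK$. The key observation is that if $\eta$ is within distance $r$ of some $\eta_0\in\NN(u)$ and the segment $[\eta,\eta_0]$ lies in a region where $\|D_\eta\FFFF(u,\cdot)\|\le L$, then
\[
|\FFFF(u,\eta)|=|\FFFF(u,\eta)-\FFFF(u,\eta_0)|\le L\,r,
\]
so the $r$-tube around $\NN(u)$ is contained in the sublevel set $\{|\FFFF(u,\cdot)|\le Lr\}$, on which Lemma~\ref{l2.6} already provides a power-law bound in~$r$.

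First I would fix a bounded convex open set $\OO'$ with $\KK\subset\OO'\subset\overline{\OO'}\subset\OO$ and set $r_0:=\dist(\KK,\OO\setminus\OO')>0$. Since $\FFFF(u,\cdot)$ is analytic on~$\OO$ for every $u\in X$, the convention on analytic maps depending on a compact parameter (see~\eqref{analytic} and the paragraph following it) guarantees that
\[
L:=\sup_{u\in X,\;\eta\in\overline{\OO'}}\|D_\eta\FFFF(u,\eta)\|<\infty.
\]
Then for $r\in(0,r_0]$ and $\eta\in\KK$ with $\dist(\eta,\NN(u))\le r$, any near-minimiser $\eta_0\in\NN(u)$ satisfies $\|\eta-\eta_0\|\le 2r\le 2r_0$, which places $\eta_0\in\OO'$; by convexity of~$\OO'$, the whole segment $[\eta,\eta_0]$ stays in~$\OO'$, and the mean value inequality gives $|\FFFF(u,\eta)|\le 2Lr$ (after shrinking $r_0$ if necessary so that $2r_0$-neighbourhood of $\KK$ still lies in $\OO'$).

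Since $\ell$ is supported in~$\KK$, combining this inclusion with Lemma~\ref{l2.6} yields
\[
\ell\bigl(\{\eta\in\OO:\dist(\eta,\NN(u))\le r\}\bigr)
\le\ell\bigl(\{\eta\in\OO:|\FFFF(u,\eta)|\le 2Lr\}\bigr)
\le C_1(2Lr)^{c_1}
\]
for all $u\in X$ and $r\in(0,r_0]$, with constants $C_1,c_1$ independent of~$u$. The remaining range $r\in(r_0,1]$ is absorbed by the trivial bound $\ell(\,\cdot\,)\le 1\le r_0^{-c_1}r^{c_1}$ at the cost of enlarging the constant, and this yields~\eqref{4.8}.

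The only point requiring care is the uniformity of~$L$ in the parameter~$u$: one needs both the Fr\'echet analyticity of $\FFFF(u,\cdot)$ and the absence of blow-up of $D_\eta\FFFF$ as $u$ ranges over the compact set~$X$. This is precisely what the global convention \eqref{analytic} and its parameter-dependent extension were stated for, so there is no genuine obstacle; had only joint continuity of~$\FFFF$ been available, one would need to invoke Cauchy's estimates on a small complex neighbourhood of~$\KK$ to convert a sup-norm bound on~$\FFFF$ (obtained from compactness of $X\times\overline{\OO'}$) into the derivative bound, uniformly in $u\in X$.
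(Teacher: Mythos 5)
Your argument is correct and is essentially the paper's own proof: the uniform bound on $D_\eta\FFFF$ coming from the convention~\eqref{analytic} (with parameter $u$ in the compact set $X$), together with convexity and the mean value inequality, shows that the $r$-tube around $\NN(u)$ lies in the sublevel set $\{|\FFFF(u,\cdot)|\le C'r\}$, after which Lemma~\ref{l2.6} gives~\eqref{4.8}. The only difference is cosmetic: since $\OO$ itself is bounded and convex and $\NN(u)\subset\OO$, the paper applies the derivative bound directly on $X\times\OO$, so your auxiliary set $\OO'$ and the case split in $r$ are not needed.
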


\begin{proof}
We first recall that $\|(D_\eta \FFFF)(u,\eta)\|_{\LL(E)}\le C<\infty$ for $(u,\eta)\in X\times\OO$, in view of our convention concerning analytic functions. By the convexity of~$\OO$, it follows that 
$$
\{\eta\in\OO:\dist(\eta,\NN(u))\le r\}\subset
\{\eta\in\OO:|\FFFF(u,\eta)|\le C'r\},
$$
where $C'>0$ does not depend on $u\in X$ and $r\in[0,1]$. The required result is now implied by~\eqref{2.25}. 
\end{proof}

\begin{proof}[Proof of Lemma~\ref{l2.6}]
It suffices to establish~\eqref{2.25} for $r\le r_0$, for a suitable $r_0>0$. Let us denote by $\ZZ(u,r)$ the set in the left-hand side of~\eqref{2.25} and take any  $u^0\in X$ and $\eta^0\in \KK$. If $\FFFF(u^0,\eta^0)\ne0$, then for sufficiently small balls $X\supset O_{u^0}\ni u^0$ and~$E\supset O_{\eta^0}\ni\eta^0$, we have 
\begin{equation}\label{4.10}
|\FFFF(u,\eta)|> \sigma(u^0, \eta^0)>0 \quad  
\text{for $u\in O_{u^0}$, $\eta\in O_{\eta^0}$}. 
\end{equation}

Now assume that $\FFFF(u^0,\eta^0) =0$. Since~$\FFFF(u^0,\cdot)$ is analytic and not identically zero, we can find a vector $e\in E$ belonging to the span of finitely many first vectors of the basis~$\{e_j\}$ such that  the function $t\mapsto f_{u^0,\eta^0}(t):=\FFFF(u^0,\eta^0+te)$ does not vanish identically. It follows that $f_{u^0,\eta^0}^{(m)}(0)\ne0$
for some $m=m(u^0,\eta^0)$, so that we can find a number $\delta>0$ and open balls $O_{u^0}\ni u^0$ and~$O_{\eta^0}\ni\eta^0$ such that 
$$
|f_{u,\eta}^{(m)}(t)| \ge \gamma(u^0, \eta^0)>0\quad  
\text{for $u\in O_{u^0}$, $\eta\in O_{\eta^0}$, $|t|\le\delta$}. 
$$
This implies that (see Lemma~2 in~\cite{bakhtin-1986} or  Lemma~B.1 in~\cite{eliasson-2002})
\begin{equation}\label{4.11}
\Leb\bigl(\{t\in [-\delta,\delta]: |f(t)| \le r\}\bigr) 
\le C(u^0, \eta^0)\,r^{1/m}
\end{equation}
for $u\in O_{u^0}$, $\eta\in O_{\eta^0}$, and $r\in[0,1]$. 
Since the projection of~$\ell$ to any subspace spanned by finitely many vectors of the basis~$\{e_j\}$ possesses a bounded density with respect to the Lebesgue measure, applying the Fubuni theorem, we conclude from~\eqref{4.11} that
\begin{equation}\label{4.12}
\ell\bigl(\ZZ(u,r) \cap O_{\eta^0}\bigr) \le C(u^0,\eta^0)\,r^{c(u^0,\eta^0)}.
\end{equation}

Now let us choose a finite system of domains $O_{u^j} \times O_{\eta^j}$ which covers the compact set $X\times \KK$ such that~\eqref{4.10} or~\eqref{4.12} hold with $u^0=u^j$ and $\eta^0=\eta^j$. Denoting by~$r_0$ the minimum of all involved constants~$\sigma(u^j,\eta^j)$, by~$C$ the maximum of the constants $C(u^j, \eta^j)$, and by~$c$ the minimum of all exponents $c(u^j, \eta^j)$, we see that~\eqref{2.25} follows from~\eqref{4.10} and~\eqref{4.12}. 
 \end{proof}

In Section~\ref{s6.2}, we shall need the following particular case of Lem\-ma~\ref{l2.6} when~$E$ is a finite-dimensional space and~$X$ is a singleton.

\begin{corollary} \label{r2.9}
	 Let~$\FFFF:\R^n\to\R$ be a non-zero analytic function and let $\xi_1,\dots,\xi_n$ be independent random variables whose joint law possesses a density (with respect to the Lebesgue measure) that is bounded by $\lambda\rho$, where $\lambda>0$ is a number, and~$\rho$ is a continuous function with compact support. Then 
	\begin{equation}
		\IP\{|\FFFF(\xi_1,\dots,\xi_n)|\le r\}\le C(\rho)\,r^c\lambda \quad\mbox{for any $r\in[0,1]$},
	\end{equation}
	where $C(\rho)$ and $c$ are positive numbers depending on~$\FFFF$, but not on~$\lambda$ and the random variables $\xi_1,\dots,\xi_n$. 
\end{corollary}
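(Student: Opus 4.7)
My plan is to reduce the corollary to Lemma~\ref{l2.6} by dominating the given density pointwise by a uniform measure on a box. Concretely, I would first let $p$ denote the joint density of $(\xi_1,\dots,\xi_n)$, pick a closed axis-aligned box $B\subset\R^n$ containing $\supp\rho$, and choose a bounded convex open set $U\subset\R^n$ strictly containing $B$. I would then define $\ell_0\in\PP(\R^n)$ to be the uniform probability measure on $B$. Being a product of uniform distributions on intervals, $\ell_0$ is decomposable with respect to the standard basis $\{e_j\}$ of $\R^n$, and its one-dimensional projections admit continuous densities. Since $\FFFF:\R^n\to\R$ is analytic and not identically zero, its restriction to the connected open set $U$ is also not identically zero by the uniqueness of analytic continuation. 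Therefore the triple $(\{\ast\},\ell_0,U)$ together with $\FFFF$ fits the hypotheses of Lemma~\ref{l2.6} (with the singleton $X=\{\ast\}$, $E=\R^n$, and $\OO=U$).

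Invoking Lemma~\ref{l2.6} then produces constants $C_0,c>0$ depending on $\FFFF$ and on the chosen boxes such that
\begin{equation*}
\ell_0\bigl(\{x\in\R^n:|\FFFF(x)|\le r\}\bigr)\le C_0\, r^c \qquad\text{for all }r\in[0,1].
\end{equation*}
Since $\ell_0$ is the normalised Lebesgue measure on $B$, this translates into
\begin{equation*}
\Leb\bigl(\{|\FFFF|\le r\}\cap B\bigr)\le C_0\,\Leb(B)\, r^c.
\end{equation*}

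Finally, using the pointwise domination $p\le \lambda\rho\le \lambda\|\rho\|_\infty\mathbf{1}_B$ and the previous estimate, I would compute
\begin{equation*}
\IP\{|\FFFF(\xi_1,\dots,\xi_n)|\le r\}
=\int_{\{|\FFFF|\le r\}}p(x)\,\dd x
\le\lambda\,\|\rho\|_\infty\,\Leb\bigl(\{|\FFFF|\le r\}\cap B\bigr),
\end{equation*}
which combined with the preceding display yields the desired inequality with $C(\rho)=C_0\,\Leb(B)\,\|\rho\|_\infty$ and with $\lambda$ appearing only as an overall prefactor. All substantive analytic content lies in Lemma~\ref{l2.6}; here the only nontrivial step is the pointwise bound $\rho\le\|\rho\|_\infty\mathbf{1}_B$, which converts a general (possibly degenerate, non-product) density into one satisfying the decomposability required by the lemma. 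Consequently, I do not expect a serious obstacle in carrying out this argument.
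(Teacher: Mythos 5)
Your argument is correct and is essentially the route the paper intends: the paper states Corollary~\ref{r2.9} without proof, as a particular case of Lemma~\ref{l2.6}, and your pointwise domination of the joint density by $\lambda\|\rho\|_\infty\mathbf{1}_B$ followed by an application of the lemma to a \emph{fixed} auxiliary product measure on the box~$B$ is exactly what makes the constants depend only on~$\FFFF$ and~$\rho$, and not on~$\lambda$ or the particular random variables. The one point to patch is that the uniform measure on~$B$ does not literally satisfy the hypotheses of Lemma~\ref{l2.6}: its one-dimensional projections have densities proportional to $\mathbf{1}_{[a_j,b_j]}$, which are not continuous. Either note that the proof of Lemma~\ref{l2.6} only uses boundedness of the densities of finite-dimensional projections, or, staying with the lemma as stated, replace $\ell_0$ by a product of triangular (or smooth bump) densities supported in a slightly larger box and bounded below on~$B$ by a constant $c_0>0$; then $\Leb\bigl(\{|\FFFF|\le r\}\cap B\bigr)\le c_0^{-1}\,\ell_1\bigl(\{|\FFFF|\le r\}\bigr)\le c_0^{-1}C_0\,r^c$, and the rest of your computation goes through unchanged.
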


\section{Applications}
\label{s4} 

\subsection{Two-dimensional Navier--Stokes system}
\label{s4.1}
We consider the Navier--Stokes system on the torus $\T_a^2=\R^2/(2\pi a_1)\Z\oplus (2\pi a_2)\Z$, written in the form~\eqref{1.7} after projecting to the space~$H$ of divergence-free square-integrable vector fields on~$\T_a^2$ with zero mean value. The random forcing is assumed to be of the form~\eqref{1.08}, where $\{\eta_k\}$ is a sequence of i.i.d.\ random variables in $E:=L^2(J,\HH)$ for some Hilbert space~$\HH$ embedded into~$H$. Let $S(u_0,\zeta)$ be the time-$1$ resolving operator for~\eqref{1.7}. That is, $S$ maps $H\times E$ to~$H$ and takes $(u_0,\eta)$ to~$u(1)$, where~$u(t)$ stands for the solution of~\eqref{1.7} satisfying the initial condition $u(0)=u_0$. Let us denote by $H^s(\T_a^2,\R^2)$ the usual Sobolev space of order $s\in\Z$ and by~$H^s$ its intersection with~$H$. Standard dissipativity and regularisation results enable one to prove that, for any compact subset $\KK\subset E$, there is a compact absorbing set~$X\subset H$, bounded and closed in~$H^2$, such that $S(X\times \KK)\subset X$; see~\cite[Section~2]{KS-cmp2001} or~\cite[Section~4.1]{shirikyan-asens2015}. The restrictions~$\{u_k\}$ of a solution~$u(t)$ for~\eqref{1.7}  to integer times satisfy relation~\eqref{1.1}. Thus, if~$\{\eta_k\}$ is a sequence of i.i.d.\ random variables in~$E$ whose law~$\ell$ has a compact support~$\KK\subset E$, then~\eqref{1.7} defines a homogeneous Markov process~$(u_k,\IP_u)$ in~$H$ for which~$X$ is a closed invariant subset. Our aim is to study the large-time asymptotics of~$(u_k,\IP_u)$.

From now on, we assume that the measure $\ell=\DD(\eta_k)$ has a compact support $\KK\subset E$, which contains the origin, and satisfies\footnote{We emphasise that~\hyperlink{H4}{(H$_4$)} can be regarded as a condition on the law~$\ell$ of the random variables~$\eta_k$, since any other random variable with law~$\ell$ has the same structure as~$\eta_k$.} the decomposability  hypothesis~\hyperlink{H4}{(H$_4$)}. To formulate the main result, we need the concepts of an {\it observable measure\/}\footnote{The concept of observability is widely used in the control theory and means, roughly speaking, that if a functional of a non-zero solution of a homogeneous linear differential equation vanishes identically in time, then it must be zero. In Definition~\ref{d6.1}, we have a similar property for functions: the left-hand side of~\eqref{6.2} defines an affine function, and if it vanishes on~$\zeta$, then it must be zero.} and a {\it saturating subspace\/}. 

\begin{definition} \label{d6.1}
Let~$\HH$ be a Hilbert space with an inner product~$(\cdot,\cdot)_\HH$ and $\{\varphi_i\}_{i\in\II}$ an orthonormal basis in~$\HH$.	We say that a function~$\zeta\in L^2(J,\HH)$ is {\it Lipschitz-observable\/} with respect to~$\{\varphi_i\}_{i\in\II}$ if for any Lipschitz-continuous functions $a_i:J\to\R$, $i\in\II$ and any continuous function $b:J\to\R$ such that 
	\begin{equation} \label{6.1}
		\sum_{i\in\II}\|a_i\|_{C(J)}^2<\infty
	\end{equation}
	the equality\footnote{The convergence in~$L^1(J)$ of the series in~\eqref{6.2} (when~$\II$ is infinite) follows easily from~\eqref{6.1}.}
	\begin{equation} \label{6.2}
		\sum_{i\in\II}a_i(t)(\zeta(t),\varphi_i)_{\HH}-b(t)=0\quad\mbox{in $L^1(J)$}
	\end{equation}
	implies that $a_i$, $i\in\II$, and~$b$ vanish identically. 
	
	A probability measure~$\ell$ on $L^2(J,\HH)$ is said to be {\it Lipschitz-observable\/} with respect to~$\{\varphi_i\}$  if $\ell$-almost every trajectory $\eta\in L^2(J,\HH)$ is Lipschitz-observable with respect to~$\{\varphi_i\}$. 
\end{definition}

We now fix a finite- or infinite-dimensional Hilbert space $\HH\subset H^3$ and assume that~$\ell$ satisfies the following hypothesis, which is not very restrictive in view of the examples given in Section~\ref{s6}. In particular, as we show in Section~\ref{s6.2}, it holds for the coloured noise~\eqref{0.2}, \eqref{0.3}. 

\begin{description}
\item[Observability.]
{\sl
The measure~$\ell$ is Lipschitz-observable with respect to an orthonormal basis~$\{\varphi_i\}$ of the space~$\HH$.}  
\end{description}

\begin{remark} \label{rfd-obs}
	If $\dim\HH<\infty$, then property of Lipschitz-observability does not depend on the basis. Indeed,  if we change $\{\varphi_i\}_{i\in\II}$ to another orthogonal basis in~$\HH$, then the vector-function $a(t)=(a_1(t), \dots a_N(t))$ will be replaced by $a'(t)=Ua(t)$, where $U:\R^N\to\R^N$ is an orthogonal transformation. The function $a'(t)$ vanishes identically in~$t$ if and only if $a(t)$ does, and the  components of $a'$ are Lipschitz-continuous if and only if those of~$a$ are. This implies the required assertion. 
	
	We also note that, in the case of a finite $\II$, the Lipschitz-observability  means that the elements $\{(\zeta(t),\varphi_i)\}$ of the quotient space $ L^2(J)/C(J)$ are linearly independent over the ring of Lipschitz-continuous functions on $J$.
\end{remark}

Given a subspace $\HH\subset H^3$, we define a non-decreasing sequence of closed subspaces $\HH_k \subset H^3$, $k\ge0$, by the following rule:
\begin{itemize}
	\item $\HH_0$ coincides with the closure of~$\HH$ in~$H^3$;
	\item if~$\HH_k$ is already defined, then~$\HH_{k+1}$ is the closure in $H^3$ of the linear space, formed by all vectors of the form
\begin{equation} \label{eta1}
	\eta+\sum_{l=1}^nQ(\zeta_l,\xi_l),\quad n=1,2, \dots,
\end{equation}
where $Q(\zeta,\xi)=\Pi(\langle \zeta, \nabla\rangle \xi+\langle \xi,\nabla\rangle \zeta)$, and for $l=1,\dots,n$ the vectors $\eta,\eta_l\in\HH_k$ and $\xi_l\in\HH$ are such that $Q(\zeta_l,\xi_l)\in H^3$.
\end{itemize} 

\begin{definition}
	A subspace $\HH\subset H^3$ is said to be {\it saturating\/} if the union of~$\{\HH_k\}_{k\ge0}$ is dense in~$H$.\footnote{We emphasise that condition of saturation  depends on the parameters~$a_1$ and~$a_2$ of the torus and the quadratic term~$Q$, but not on the viscosity~$\nu$.}
\end{definition}

\begin{theorem} \label{t-mixingNS}
	Let $\HH\subset H^3$ be a saturating subspace that is a Hilbert space with an orthonormal basis~$\{\varphi_i\}_{i\in \II}$ satisfying 
	\begin{equation} \label{convphi}
		\sum_{i\in\II}\|\varphi_i\|_1^2<\infty, 
	\end{equation}
	and let~$\eta$ be a random process of the form~\eqref{1.08} whose law is decomposable and satisfies the observability hypothesis. Assume, in addition, that the support of the law~$\ell$ of~$\eta_k$ contains the origin. Then, for any $\nu>0$, the Markov process~$(u_k,\IP_u)$ has a unique stationary measure~$\mu_\nu\in\PP(X)$, which satisfies inequality~\eqref{1.6}. 
\end{theorem}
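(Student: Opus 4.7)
The plan is to derive the theorem from the abstract Theorem~\ref{t1.1} by verifying its four hypotheses on a compact invariant absorbing set $X\subset H^2$, whose existence for compact $\KK\subset E$ follows from standard parabolic arguments applied to~\eqref{1.7}. Hypothesis \hyperlink{H1}{(H$_1$)} is routine: parabolic regularisation sends the time-$1$ map $S$ into $V=H^2\cap H$, which embeds compactly into $H$; $C^2$-smoothness in $u$ is classical for 2D Navier--Stokes, and analyticity in $\eta$ with uniform derivative bounds follows because $\eta$ enters the equation affinely while the bilinear nonlinearity $B$ is polynomial. Hypothesis~\hyperlink{H2}{(H$_2$)} holds with $\hat\eta=0\in\KK$ and $\hat u=0$: the energy identity combined with the Poincar\'e inequality on $\T_a^2$ gives $\|S(u,0)\|\le e^{-\nu\lambda_1}\|u\|$, where $\lambda_1>0$ is the first Stokes eigenvalue on zero-mean divergence-free functions, so one takes $a=e^{-\nu\lambda_1}$. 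Hypothesis~\hyperlink{H4}{(H$_4$)} is assumed directly.

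Thus the crux is the approximate controllability~\hyperlink{H3}{(H$_3$)}. Fix $u\in X$ and $\eta\in E$, let $u(t)$ be the trajectory of~\eqref{1.7} with $u(0)=u$ and forcing $\eta$, and let $v(t)$ solve the linearised problem
\[
\p_t v+\nu L v+B'(u(t))v=\zeta(t),\qquad v(0)=0,
\]
so that $(D_\eta S)(u,\eta)\zeta=v(1)$. Density of the image of this map is equivalent, by duality, to the statement that if $\psi\in H$ is such that the backward adjoint solution $w(\cdot)$ of $-\p_t w+\nu L w+B'(u(t))^{*}w=0$, $w(1)=\psi$, satisfies $(w(t),\varphi_i)_H=0$ for every $i\in\II$ and a.e.\ $t\in J$, then $\psi=0$.

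The strategy is induction along the saturation hierarchy: I would show that if $(w(t),\xi)_H=0$ for a.e.\ $t$ and every $\xi\in\HH_k$, then the same holds for every $\xi\in\HH_{k+1}$; density of $\bigcup_k\HH_k$ in $H$ then forces $w\equiv 0$, hence $\psi=0$. The induction step exploits the bilinearity of $B'$: one differentiates the vanishing identity in $t$ using the adjoint equation and tests it against a generic new direction $\chi+\sum_l Q(\chi_l,\xi_l)\in\HH_{k+1}$ with $\chi,\chi_l\in\HH_k$ and $\xi_l\in\HH$. The resulting terms involve $u(t)$ linearly; unrolling $u(t)$ via Duhamel produces a relation of the form
\[
\sum_{i\in\II}a_i(t)(\eta(t),\varphi_i)_\HH+b(t)=0\quad\text{for a.e.\ }t\in J,
\]
with Lipschitz-continuous coefficients $a_i,b$ built from $w$, the deterministic part of $u$, and the fixed vectors $\chi,\chi_l,\xi_l$; the summability~\eqref{6.1} is supplied by~\eqref{convphi} together with the uniform $H^2$ bound on $u(\cdot)$ available on $X$. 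The Lipschitz-observability of $\ell$ then forces $a_i\equiv 0$ and $b\equiv 0$ for $\ell$-a.e.\ $\eta$, which, upon varying $\chi,\chi_l,\xi_l$ freely within $\HH_k$ and $\HH$, delivers $(w(t),\xi)_H=0$ for every $\xi\in\HH_{k+1}$ and a.e.\ $t$, completing the induction.

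The main obstacle is the regularity bookkeeping in the inductive step: one must show that repeated differentiation of the adjoint identity produces coefficients $a_i(t)$ that are genuinely Lipschitz in $t$ and satisfy~\eqref{6.1}, which requires the $H^3$-regularity of the test directions (built into the saturation construction) and careful tracking of the $H^2$-bound for $u(t)$. A secondary subtlety is to isolate, inside the identity obtained from the induction step, the \emph{linear}-in-$\eta$ contribution that must be paired against $\{\varphi_i\}$ in the precise affine form~\eqref{6.2} of Definition~\ref{d6.1}, pushing the deterministic and nonlinear-in-$\eta$ remainders into~$b(t)$; this separation is performed by writing $u(t)=e^{-\nu Lt}u-\int_0^t e^{-\nu L(t-s)}B(u(s))\,\dd s+\int_0^t e^{-\nu L(t-s)}\eta(s)\,\dd s$ and using the embedding $\HH\subset H^3$ to keep the resulting Duhamel expansions and their derivatives well defined.
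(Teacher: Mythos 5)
Your overall route coincides with the paper's: verify \hyperlink{H1}{(H$_1$)}, \hyperlink{H2}{(H$_2$)} (with $\hat u=0$, $\hat\eta=0\in\KK$), \hyperlink{H4}{(H$_4$)}, reduce \hyperlink{H3}{(H$_3$)} by duality to the triviality of the kernel of the Gramian (equivalently, to showing that an adjoint solution $w(t)=R^{\tilde u}(1,t)^*w_0$ with ${\mathsf P}_\HH w(t)\equiv0$ must vanish), and then induct along the saturation hierarchy using Lipschitz-observability, with \eqref{convphi} supplying \eqref{6.1}. The gap is in the key mechanism of the induction step. Differentiating $(w(t),\zeta)=0$ once (for $\zeta\in\HH_k$) gives $(\nu L\zeta+Q(\tilde u(t))\zeta,w(t))=0$, and unrolling $\tilde u(t)$ by Duhamel makes the noise enter only through the convolution $\int_0^t e^{-\nu L(t-s)}\eta(s)\,\dd s$, i.e.\ through its past history, never through the pointwise value $\eta(t)$. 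Such a relation is \emph{not} of the affine form \eqref{6.2} required by Definition~\ref{d6.1}: there are no coefficients $a_i(t)$ multiplying $\eta^i(t)$, and for a fixed realisation the whole expression could be absorbed into $b(t)$, so observability yields no information about $w$ and produces no new orthogonality. (Relatedly, the new directions $Q(\chi_l,\xi_l)$ cannot be used as test vectors in the vanishing identity — that would be circular; they must emerge as coefficients.)

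The missing ingredient is a \emph{second} differentiation in time, which is exactly what the paper does: after splitting off the noise primitive via $y(t)=\tilde u(t)-\int_0^t\eta(s)\,\dd s$ (see \eqref{yu}), the once-differentiated identity \eqref{xiw0der} is differentiated again, and it is the derivative of $\int_0^t\eta^i(s)\,\dd s$ that brings out $\eta^i(t)$ pointwise with coefficient precisely $a_i(t)=(Q(\varphi_i)\zeta,w(t))$, all remaining terms being pushed into $b(t)$ (relation \eqref{obs}). Observability then forces $a_i\equiv0$, i.e.\ $w(t)\perp Q(\zeta,\varphi_i)$, which is the new-direction orthogonality that closes the induction. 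Your Duhamel split can be made to work the same way if you also differentiate a second time, but it is heavier: the derivative of the heat convolution contributes an extra term $\nu L\int_0^te^{-\nu L(t-s)}\eta(s)\,\dd s$ inside $b(t)$, whose continuity in $t$ requires an additional (if standard) parabolic regularity argument, whereas the plain primitive used in the paper is manifestly Lipschitz and the regularity bookkeeping (justified in the paper's footnote using $\tilde u\in L^2(J,H^3)\cap C(J,H^2)$ and $w\in L^2(J,H^1)\cap W^{1,2}(J,H^{-1})$) is immediate. With this second differentiation inserted, the rest of your plan matches the paper's proof.
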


\begin{remark} \label{r-NSglobal}
	Even though the convergence to the stationary measure  is formulated in Theorem~\ref{t-mixingNS} for initial functions supported by~$X$, the exponential convergence to $\mu_\nu$ holds for any initial measure with a finite first moment. Namely, there are positive numbers~$C$ and~$\gamma$ such that, for any initial measure $\lambda\in\PP(H)$ satisfying the condition $\mmmm(\lambda):=\int_H\|u\|\lambda(\dd u)<\infty$, we have
	\begin{equation} \label{mixing-global}
		\|\PPPP_k^*\lambda-\mu_\nu\|_L^*\le C_1e^{-\gamma_1 k}\bigl(1+\mmmm(\lambda)\bigr)\quad\mbox{for $k\ge0$}.
	\end{equation}
	The validity of~\eqref{mixing-global} can be derived easily from~\eqref{1.6} using the dissipativity and regularisation properties of the Navier--Stokes dynamics (e.g., see Section~3.2.4 in~\cite{KS-book}), and we shall not dwell on it. 
	
	Furthermore, denoting by~$\mu_\nu(t)$ the law of a solution for~\eqref{1.7} issued from an initial condition~$u_0$ that is independent of~$\eta$ and is distributed as~$\mu_\nu$, we note that $\mu_\nu(t+1)=\mu_\nu(t)$ for any $t\ge0$, and for any other solution $u(t)$ of~\eqref{1.7} we have 
	$$
	\bigl\|\DD(u(t))-\mu_\nu(t)\bigr\|_L^*\le C_2e^{-\gamma_2 t}\bigl(1+\mmmm(u(0))\bigr), \quad t\ge0.
	$$ 
Both these claims follow immediately from the fact that, for any integer $k\ge0$ and any $s\in(0,1)$, the function~$u(k+s)$ with  can be written as a Lipschitz-continuous function of the independent random variables $u(k)$ and~$\eta_{k+1}$ (cf.\ Section~3 in~\cite{shirikyan-2018}). 
\end{remark}

The coloured noises~\eqref{0.3} with $c_j = Cj^{-q}$ for some $q>1$ are decomposable, and as we show below, they satisfy the observability hypothesis. Therefore the result above implies the assertion of Theorem~\ref{MT}, as well as inequality~\eqref{mixing-global}. 

\begin{proof}
	In view of Theorem~\ref{t1.1}, it suffices to check Hypotheses~\hyperlink{H1}{(H$_1$)}--\hyperlink{H4}{(H$_4$)}, where~$H$ is defined by~\eqref{spaceH}, and $E=L^2(J,\HH)$. The regularity condition is a consequence of some well-known properties of the resolving semigroup for the 2D Navier--Stokes system (see Section~6 in~\cite[Chapter~1]{BV1992} and~\cite{kuksin-1982}); for the reader's convenience, we presnt a complete proof of analyticity of~$S$ in Section~\ref{s-resolvingNS}.  Condition~\hyperlink{H2}{(H$_2$)} with $\hat\eta=0$ and $\hat u=0$ follows immediately from the dissipativity of the homogeneous Navier--Stokes system. The decomposability condition~\hyperlink{H4}{(H$_4$)} for~$\ell$ holds by assumption. Thus, we only need to check~\hyperlink{H3}{(H$_3$)}. The proof is divided into three steps. 
	
	\smallskip
	{\it Step~1. Reduction}. 
	We recall that $X\subset H^2$ stands for a closed bounded set that is invariant and absorbing under the dynamics of Eq.~\eqref{1.7} restricted to the integer lattice. Given $u\in X$ and $\eta\in E$, let $\tilde u\in L^2(J,H^3)\cap W^{1,2}(J,H^1)$ be the solution of Eq.~\eqref{1.7} with the initial condition~$u$ at time $t=0$ and let $R^{\tilde u}(t,s):H\to H$ (with $0\le s\le t\le 1$) be the two-parameter process solving the linearised problem 
	\begin{equation} \label{lNS}
		\dot v+\nu Lv+Q(\tilde u(t)) v=0, \quad v(s)=v_0,
	\end{equation}
	where $Q(\tilde u) v=Q(\tilde u,v)$ and $v_0\in H$. In other words, $R^{\tilde u}(t,s)$ takes~$v_0$ to~$v(t)$, where $v:[s,1]\to H$ is the solution of~\eqref{lNS}. Denote by~$A(u,\eta)$ the operator $D_\eta S(u,\eta)$. This is the resolving operator for the non-homogeneous equation~\eqref{lNS} (that is, zero in the right-hand side should be replaced by~$\zeta$) with $v_0=0$. It can be written as 
	$$
	A(u,\eta):E\to H, \quad \zeta\mapsto\int_0^1R^{\tilde u}(1,s)\zeta(s)\,\dd s.
	$$ 
	Denoting by~$\LLLL(u,\eta)$ its image, we need to prove that $\LLLL(u,\eta)$ is dense in~$H$ for any fixed~$u\in X$ and $\ell$-a.e.\ $\eta\in\KK$. We claim that the density of~$\LLLL(u,\eta)$ is equivalent to the triviality of the kernel for the operator\,\footnote{The operator~$G^{\tilde u}$ is called the {\it Gramian\/} of the linear control problem associated with~\eqref{lNS}. It is a central object for studying controllability properties and enables one to formulate them in terms of observability of solutions for the dual problem; see Chapter~1 in~\cite{coron2007}.} 
	\begin{equation} \label{gramian}
	G^{\tilde u}:=\int_0^1 R^{\tilde u}(1,t){\mathsf P}_\HH R^{\tilde u}(1,t)^*\dd t,		
	\end{equation}
	where $R^{\tilde u}(1,t)^*:H\to H$ is the adjoint of $R^{\tilde u}(1,t)$, and~${\mathsf P}_\HH:H\to H$ stands for the orthogonal projection to the closure of~$\HH$ in~$H$; cf.\  Theorem~2.5 in~\cite[Part~IV]{zabczyk2008}. Indeed, let us introduce an operator $A_1(u,\eta):L^2(J,H)\to H$ by the relation $A_1(u,\eta)=A(u,\eta){\mathsf P}_\HH$ and note that the image~$\LLLL(u,\eta)$ of the operator~$A(u,\eta)$ is dense in~$H$ if and only if so is the image of~$A_1(u,\eta)$. The latter property is equivalent to the triviality of the kernel of the adjoint operator $A_1(u,\eta)^*:H\to L^2(J,H)$. Since 
	$(A(u,\eta)^*g)(t)={\mathsf P}_\HH R^{\tilde u}(1,t)^*g$ for $g\in H$, we see that $A_1(u,\eta)^*g=0$ for some $g\in H$ if and only if $G^{\tilde u}g=0$.
	
\smallskip
{\it Step~2. Description of $R^{\tilde u}(1,t)^*$}. 
	Together with~\eqref{lNS}, let us consider the dual problem 
	\begin{equation} \label{lNSdual}
	\dot w-\nu Lw-Q^*(\tilde u(t)) w=0, \quad w(1)=w_0,
	\end{equation}
	where $Q^*(\tilde u)$ is the (formal) $L^2$-adjoint of~$Q(\tilde u)$ given by
	$$
	Q^*(\tilde u)w=-\Pi\bigl(\langle\tilde u, \nabla\rangle w+(\nabla \otimes w)\tilde u\bigr),
	$$
	where $\nabla \otimes w$ is a $2\times 2$ matrix with the elements $\p_iw_j$.	Problem~\eqref{lNSdual} is a linear backward parabolic equation, and for any $w_0\in H$, it possesses a unique solution $w\in L^2(J,H^1)\cap W^{1,2}(J,H^{-1})$. It is well known, and can be proved easily, that~$w$ can be written as
	\begin{equation} \label{soldual}
		w(t)=R^{\tilde u}(1,t)^*w_0. 
	\end{equation}

\smallskip
{\it Step~3. Inductive argument\/}. 
We wish to prove that $\Ker(G^{\tilde u})=\{0\}$ for any $u\in X$ and $\ell$-a.e.\ $\eta\in E$. Since the union of~$\HH_k$ is dense in~$H$, it suffices to show that, $\ell$-almost surely, any element of $\Ker(G^{\tilde u})$ is orthogonal to~$\HH_k$ for all $k\ge0$. The rest of the proof follows a rather common argument used in the Malliavin calculus to prove the almost sure non-degeneracy of Malliavin's matrix; see the proof of Theorem~2.3.2 in the book~\cite{nualart2006} and the paper~\cite[Sections~3--5]{MP-2006}. The main difference is that we use the observability of the measure~$\ell$, rather than fine properties of functionals of the Wiener process.

Let us fix any realisation $\eta\in\KK$ that is observable with respect to~$\{\varphi_i\}_{i\in\II}$, and suppose that $w_0\in \Ker(G^{\tilde u})$. In view of~\eqref{gramian}, we have 
$$
(G^{\tilde u}w_0,w_0)=\int_0^1\|{\mathsf P}_\HH R^{\tilde u}(1,t)^*w_0\|^2\dd t=0,
$$
whence we see that ${\mathsf P}_\HH R^{\tilde u}(1,t)^*w_0=0$ for any $t\in J$. Thus,  
\begin{equation} \label{xiw0}
	(\zeta,R^{\tilde u}(1,t)^*w_0)=0\quad \mbox{for $t\in J$},
\end{equation}
where $\zeta\in\HH_0$ is arbitrary. Taking $t=1$, we see that $w_0$ must be orthogonal to~$\HH_0$. Suppose we have proved that relation~\eqref{xiw0} holds for all $\zeta\in\HH_k$ with some integer $k\ge0$ (so that taking again $t=1$ we see that~$w_0$ must be orthogonal to~$\HH_k$). By induction, the verification of~\hyperlink{H3}{(H$_3$)} will be completed once we show that~\eqref{xiw0} holds for all $\zeta\in\HH_{k+1}$. 

For any given $\zeta\in\HH_k$, differentiating~\eqref{xiw0} in time and using~\eqref{lNSdual} and~\eqref{soldual}, we derive\footnote{Let us check briefly that the regularity of~$\tilde u$ and~$w$ is sufficient to justify~\eqref{xiw0der}. Indeed, as was mentioned above, the function~$w$ belongs to~$W^{1,2}(J,H^{-1})$, so that the $L^2$-scalar product $(\zeta,w(t))$ can be differentiated for any $\zeta\in H^1$. Resolving the first equation in~\eqref{lNSdual} with respect to $\dot w$ and substituting the resulting expression into the time derivative of~\eqref{xiw0}, we obtain $(\zeta,\nu Lw+Q(\tilde u)^*w)=0$, where the equality is valid for almost every $t\in J$. Since $\tilde u\in L^2(J,H^3)$ and $w\in L^2(J,H^1)$, we can integrate by parts in the last equality, which gives the validity of equality~\eqref{xiw0der} for almost every $t\in J$. Finally, recalling that $\tilde u\in C(J,H^2)$ and $w\in C(J,L^2)$, we see that the right-hand side is a continuous function of time and, hence, vanishes on~$J$.} 
\begin{equation} \label{xiw0der}
	\bigl(\nu L\zeta+Q(\tilde u(t))\zeta,w(t)\bigr)=0\quad\mbox{for $t\in J$},
\end{equation}
where $w$ is given by~\eqref{soldual}. If we set 
\begin{equation} \label{yu}
y(t)=\tilde u(t)-\int_0^t\eta(s)\dd s=\tilde u(t)-\sum_{i\in\II}\varphi_i(x) \int_0^t\eta^i(s)\,\dd s,	
\end{equation}
where $\eta^i(t)=(\eta(t),\varphi_i)_\HH$, 
then~\eqref{xiw0der} can be rewritten as 
$$
\bigl(\nu L\zeta+Q(y(t))\zeta,w(t)\bigr)
+\sum_{i\in\II} \bigl(Q(\varphi_i)\zeta,w(t)\bigr)\int_0^t\eta^i(s)\dd s=0. 
$$
Differentiating this relation in time and setting 
\begin{align*}
	a_i(t)&=\bigl(Q(\varphi_i)\zeta,w(t)\bigr),\\
	b(t)&=\frac{\dd}{\dd t}\bigl(\nu L\zeta+Q(y(t))\zeta,w(t)\bigr)+\sum_{i\in\II} \bigl(Q(\varphi_i)\zeta,\dot w(t)\bigr)\int_0^t\eta^i(s)\dd s,
\end{align*}
we derive 
\begin{equation} \label{obs}
b(t)+\sum_{i\in\II}a_i(t)\eta^i(t)=0\quad\mbox{for all $t\in J$}. 
\end{equation}
The function~$b$ is continuous, and~$\{a_i,i\in\II\}$ are differentiable. Moreover, condition~\eqref{convphi} implies that~\eqref{6.1} holds. By observability of~$\ell$, it follows that $a_i\equiv 0$ for all $i\in\II$, whence $w(1)=w_0$ is orthogonal to all vectors~\eqref{eta1} with $\eta=0$. Since by the above~$w_0$ is orthogonal also to~$\HH_k$, we conclude that it is orthogonal to the  whole space~$\HH_{k+1}$. This completes the proof of Theorem~\ref{t-mixingNS}. 
\end{proof}

In conclusion of this subsection, let us note that Theorem~\ref{t-mixingNS} remains valid in the case when the Navier--Stokes system is studied on a two-dimensional sphere\footnote{A similar remark applies to the case of a rectangular domain with Lions boundary condition; see Section~9 in~\cite{AS-2008}. However, we shall not elaborate on that point since the analysis is similar.}, and the space~$\HH$ contains sufficiently many spherical harmonics. Indeed, all the arguments of the proof are exactly the same as in the case of the torus, except the verification of the saturating property. However, the construction of the successive subspaces~$\HH_k$ coincides with that used by Agrachev and Sarychev in~\cite[Theorems~6.1]{AS-2008} to study the controllability problem. The fact that sufficiently many spherical harmonics form a saturating subspace is established in~\cite[Theorem~10.4]{AS-2008}.

\subsection{Complex Ginzburg--Landau equation}
\label{s4.2}
We now turn to Eq.~\eqref{cgl-intro}. To simplify the presentation, we confine ourselves to the more complicated case $m=2$. In addition, we shall consider only finite-dimensional random forces; the extension to infinite-dimensional forces is simpler than in the case of the Navier--Stokes system. 

We thus consider the equation
\begin{equation} \label{cgl}
	\p_t u-(\nu +i)\Delta u+\gamma u+ic|u|^4 u=\eta(t,x), \quad x\in \T_a^3. 
\end{equation}
Here $u=u(t,x)$ is an unknown complex-valued function, $\nu$, $\gamma$, and~$c$ are given positive numbers, and~$\eta$ is a random process of the form~\eqref{1.08}, where~$\{\eta_k\}$ is a sequence of i.i.d.\ random variables in $L^2(J,H^2)$, and we set $H^s:=H^s(\T_a^3,\C)$ for $s\in\Z$. Equation~\eqref{cgl} is supplemented with the initial condition
\begin{equation} \label{ic-cgl}
	u(0)=u_0\in H^1. 
\end{equation}
We shall consider~$L^2:=L^2(\T_a^3,\C)$ as a real Hilbert space with the inner product
\begin{equation} \label{inner-product}
(u,v)=\Re\int_{\T_a^3}u\bar v\,\dd x	
\end{equation}
and endow the Sobolev spaces~$H^s$ with the associated norms and inner products. As in the case of the Navier--Stokes system, if the law of~$\eta_k$ has a compact support~$\KK$ in $L^2(J,H^2)$, then discrete-time Markov process~$(u_k,\IP_u)$ associated with~\eqref{1.1} (in which $S:H^1\times L^2(J,H^2)\to H^1$ is the time-$1$ resolving operator for~\eqref{cgl}) possesses a compact absorbing set~$X\subset H^1$, that is closed and bounded in~$H^2$ (cf.\ Theorem~\ref{t-IV-CGL}). 

Let us introduce a concept of a {\it saturating subspace\/}  for~\eqref{cgl}. Given a finite-dimensional subspace $\HH\subset H^2$ invariant under complex conjugation (that is, $\bar\zeta\in\HH$ for all $\zeta\in\HH$), we define a sequence of closed subspaces $\HH_k \subset H^2$, $k\ge0$, by the following rule:
\begin{itemize}
	\item $\HH_0$ coincides with $\HH$;
	\item if $\HH_k$ is already defined, then $\HH_{k+1}$ is the vector span of~$\HH_k$ and the products $\zeta\xi$ with $\zeta\in\HH_k$ and $\xi\in\HH$. 
\end{itemize} 
It is straightforward to check that $\{\HH_k\}$ is a non-decreasing sequence of finite-dimensional subspaces in~$H^2$ that are invariant under complex conjugation. 

\begin{definition}
	The subspace $\HH\subset H^2$ is said to be {\it saturating\/} if the union of~$\{\HH_k\}_{k\ge0}$ is dense in~$L^2$.
\end{definition}

Let us write $B(u)=ic|u|^4u$ for the nonlinear term in~\eqref{cgl} and note that it is a real-analytic function in the space~$H^2$, and its derivative $Q(u):H^2\to H^2$ acts  essentially as a multiplication operator:
$$
Q(u)v=ic\bigl(3|u|^4v+2|u|^2u^2\bar v\bigr).
$$

\begin{theorem} \label{t-mixingCGL}
	Let  $\HH\subset H^2$ be a finite-dimensional subspace that is invariant under complex conjugation, contains the function identically equal to~$1$, and is saturating. Let~$\{\varphi_i\}_{i\in\II}$ be an orthonormal basis in~$\HH$ and let~$\eta$ be a random process of the form~\eqref{1.08} in which~$\{\eta_k\}$ is an i.i.d.\ sequence in $E=L^2(J,\HH)$ such that the following conditions are fulfilled. 
\begin{description}
	\item [\bf Decomposability.]
The law~$\ell\in\PP(E)$ of~$\eta_k$ has a compact support $\KK\subset E$ containing the origin and satisfies the decomposability hypothesis~\hyperlink{H4}{{\rm(H$_4$)}}. 
	\item [\bf Observability.]
There is $T\in(0,1)$ such that the law~$\ell'$ of the restriction of the random variables~$\eta_k$ to the interval $J'=[0,T]$ is Lipschitz-observable with respect to~$\{\varphi_i\}$.
\end{description}	
Then, for any $\nu>0$, the Markov process~$(u_k,\IP_u)$ has a unique stationary measure~$\mu_\nu\in\PP(X)$, which satisfies inequality~\eqref{1.6}.
\end{theorem}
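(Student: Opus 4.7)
The plan is to apply Theorem~\ref{t1.1} with $H=H^1$, the compactly embedded $V=H^2$, $E=L^2(J,\HH)$, and $X$ a closed absorbing set in $H^2$ invariant under the time-$1$ flow of~\eqref{cgl}; such an $X$ is produced by the standard parabolic dissipative estimates for~\eqref{cgl}. Hypothesis~\hyperlink{H1}{(H$_1$)} follows from parabolic smoothing together with the fact that $B(u)=ic|u|^4u$ is polynomial, hence entire, in $(u,\bar u)$, so $S$ is smooth in $u$ and analytic in $\eta$ with derivatives bounded on bounded sets. Hypothesis~\hyperlink{H2}{(H$_2$)} holds with $\hat u=\hat\eta=0$ because the nonlinearity is $L^2$-conservative and the linear part $-(\nu+i)\Delta+\gamma$ drives trajectories to~$0$ exponentially fast in $H^1$. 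Hypothesis~\hyperlink{H4}{(H$_4$)} is built into the assumptions; the entire content of the theorem is~\hyperlink{H3}{(H$_3$)}.

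For \hyperlink{H3}{(H$_3$)} I would mirror the proof of Theorem~\ref{t-mixingNS}. Linearising~\eqref{cgl} along the trajectory $\tilde u$ driven by $(u,\eta)$ gives the $\R$-linear equation
\begin{equation*}
\dot v-(\nu+i)\Delta v+\gamma v+Q(\tilde u(t))v=0,\qquad Q(u)v=ic\bigl(3|u|^4v+2u^2|u|^2\bar v\bigr),
\end{equation*}
with forward flow $R^{\tilde u}(t,s)$. Density of the image of $D_\eta S(u,\eta)$ in $H^1$ is equivalent to triviality of the kernel of the Gramian $G^{\tilde u}=\int_0^1R^{\tilde u}(1,t)\mathsf{P}_\HH R^{\tilde u}(1,t)^*\,dt$. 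For $w_0\in\Ker G^{\tilde u}$ set $w(t)=R^{\tilde u}(1,t)^*w_0$; then $w$ solves the backward dual parabolic equation, is real-analytic in $t\in[0,1)$ by parabolic regularisation, continuous up to $t=1$ in $H^1$, and satisfies $(\zeta,w(t))=0$ on $J$ for every $\zeta\in\HH_0:=\HH$. Since $\bigcup_k\HH_k$ is dense in $L^2$, it suffices to prove inductively that, for every $k\ge0$ and every $\zeta\in\HH_k$, the identity $(\zeta,w(t))=0$ holds on a neighbourhood of $0$ in $J'=[0,T]$; time-analyticity then propagates the identity to all of $[0,1)$ and continuity yields $(\zeta,w_0)=0$, so $w_0\perp\bigcup_k\HH_k$ and $w_0=0$.

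For the induction step I write $\tilde u(t)=y(t)+z(t)$ with $z(t)=\sum_i\varphi_i\int_0^t\eta^i(s)\,ds$ and $y$ Lipschitz into $H^2$, and expand $Q(\tilde u(t))\zeta$ as a polynomial of degree $4$ in $z(t)$ with $(y,\bar y)$-dependent coefficients. Differentiating the identity $(\zeta,w(t))=0$ in $t$, using the dual equation for $\dot w$ and the CGL equation for $\dot{\tilde u}=\dot y+\eta$, and isolating the contribution linear in $\eta^i(t)$ gives a relation
\begin{equation*}
\sum_i a_i(t)\eta^i(t)+b(t)=0 \quad\text{on }J',
\end{equation*}
with $a_i$ Lipschitz and $b$ continuous. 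By the Lipschitz-observability hypothesis each $a_i$ vanishes on $J'$; the relation $a_i(0)=0$ expresses orthogonality of $w(0)$ to an $\R$-linear combination of pointwise products $\varphi_i\zeta$ and $\bar\varphi_i\zeta$ (and, through higher polynomial layers in $z$, to products of $\zeta$ with several $\HH$-factors), all of which lie in $\HH_{k+1}$ by the saturation definition and the invariance of $\HH$ under complex conjugation. Iterating the scheme---differentiating $a_i\equiv 0$ further in $t$ and re-applying observability---peels off, one $z$-layer at a time, orthogonality of $w(t)$ to every generator of $\HH_{k+1}$, closing the induction.

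The main obstacle is the combinatorial bookkeeping for the quintic nonlinearity: each differentiation in $t$ raises the polynomial degree in $z$ by one, so reaching orthogonality to pure products $\varphi_i\zeta$ requires iterating the observability argument several times and checking at each stage that the extracted coefficients $a_i(t)$ are genuinely Lipschitz in $t$ (this is where the $H^2$-boundedness of $\tilde u$ on $X$ and the compactness of $\supp\ell$ in $L^2(J,\HH)$ are used). The restriction of the observability hypothesis to $J'=[0,T]$ with $T<1$ is tailored for this: the observability is applied at times where $\tilde u$ and its polynomial combinations with $w$ are well-controlled, while the terminal regime is left to the parabolic time-analyticity step that transports the identities obtained on $J'$ to $t=1$.
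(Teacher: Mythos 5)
Your overall strategy (apply Theorem~\ref{t1.1}; the real work is~\hyperlink{H3}{(H$_3$)} via a Gramian, iterated differentiation of the orthogonality identity, Lipschitz-observability, conjugation invariance and the presence of the constant function in~$\HH$) is the same as the paper's, but the step you use to exploit the hypothesis that observability holds only on $J'=[0,T]$ is not sound. You claim that $w(t)=R^{\tilde u}(1,t)^*w_0$ is real-analytic in $t$ on $[0,1)$ ``by parabolic regularisation'' and use analytic continuation in time to transport the identities obtained on $J'$ up to $t=1$. Parabolic smoothing gives spatial regularity, not time-analyticity: the coefficients of the dual equation involve $\tilde u(t)$, which is driven by $\eta\in L^2(J,\HH)$ and is therefore only of Sobolev regularity in time, so $w$ is not analytic in $t$ and the propagation step fails. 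The paper bridges $[T,1]$ quite differently: Proposition~\ref{p-lcgl} (proved via backward uniqueness for the dual problem) shows that the time-$1$ values of homogeneous solutions of the linearised equation issued from $L^2$ data at time $T$ are dense in $H^1$; this reduces~\hyperlink{H3}{(H$_3$)} to the $L^2$-density of the reachable set at time $T$, and the Gramian $G^{\tilde u}=\int_0^T R^{\tilde u}(T,t)\mathsf P_\HH R^{\tilde u}(T,t)^*\,\dd t$ and the observability argument are then run entirely on $[0,T]$ with $L^2$ adjoints. Your formulation with the Gramian on all of $[0,1]$ for the $H^1$-valued map also creates an inner-product mismatch: saturation only gives density of $\bigcup_k\HH_k$ in $L^2$, so orthogonality of $w$ to this union in the $H^1$ pairing (which is what an $H^1$-Gramian kernel produces) does not force $w$ to vanish. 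Either you adopt the paper's reduction, or you must at least replace the analyticity claim by an explicit backward-uniqueness argument and keep all pairings in $L^2$.

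Two further points, more minor. First, your verification of~\hyperlink{H2}{(H$_2$)} is too quick: a contraction in the plain $H^1$ norm is not available (the quintic term has no sign in the $H^1$ energy estimate); the paper obtains~\eqref{1.2} by passing to the equivalent norm $\bbar u\bbar_\delta^2=\|u\|^2+\delta\|u\|_1^2$ and combining the $L^2$-contraction with the smoothing bound $\|S(u,0)\|_1\le C_B\|u\|$. Second, in the induction step the products of $\zeta\in\HH_k$ with several factors from~$\HH$ do not lie in $\HH_{k+1}$ in general; the paper's bookkeeping iterates the observability argument until it reaches the constant fifth derivative $B_5$ and then uses $1\in\HH$ (setting three arguments equal to~$1$) together with conjugation invariance to recover exactly the generators $\zeta\xi$ of $\HH_{k+1}$ — this is where the hypothesis $1\in\HH$, which you state but never use, actually enters.
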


\begin{remark}
	Theorem~\ref{t-mixingCGL} remains valid in a slightly more general setting: it suffices to require that~$\HH$ should contain a subspace~$\HH'$ satisfying the hypotheses imposed on~$\HH$ in the theorem. The proof of this observation is straightforward, and we skip the details. Moreover, an analogue of Remark~\ref{r-NSglobal} is valid under the hypotheses of Theorem~\ref{t-mixingCGL}: the exponential convergence to the stationary measure holds for any initial measure with a finite moment. Finally, for equations with a higher-order nonlinearity for which the well-posedness can only be proved for essentially bounded (rather than square-integrable) functions $\eta:J\to H^2$, one can apply a slight modification of Theorem~\ref{t1.1}, requiring the map~$S$ to be defined and analytic in an $L^2$-open neighbourhood of~$L^\infty(J,H^2)$. The latter can be obtained by applying the inverse mapping theorem at any point of the space~$L^\infty(J,H^2)$; cf.\ Section~\ref{s-resolvingCGL} and Section~2.3 in~\cite{KZ-2018}.
\end{remark}

\begin{proof}[Proof of Theorem~\ref{t-mixingCGL}]
	As in the case of the Navier--Stokes system, it suffices to check Hypotheses~\hyperlink{H1}{(H$_1$)}--\hyperlink{H4}{(H$_4$)}, where~$H=H^1$ and $E=L^2(J,H^2)$. The proof of regularity properties for the resolving operator is carried by well-known methods and is presented in Section~\ref{s-resolvingCGL}. The decomposability condition~\hyperlink{H4}{(H$_4$)} for~$\ell$ holds by assumption. Thus, we only need to check Conditions~\hyperlink{H2}{(H$_2$)} and~\hyperlink{H3}{(H$_3$)}. 
	
\smallskip
{\it Step~1: Dissipativity\/}. Let us recall that $\|\cdot\|$ and~$\|\cdot\|_1$ denote the standard norms in~$L^2$ and~$H^1$, respectively. Given a number $\delta>0$, we define a new norm $\barr\cdot\barr_\delta$ on~$H^1$ by the formula $\bbar u\bbar_\delta^2=\|u\|^2+\delta\|u\|_1^2$. We claim that, for any bounded subset~$B\subset H^1$, there are $\delta>0$ and $a\in(0,1)$ such that 
\begin{equation} \label{dissipation-CGL}
	\bbar S(u,0)\bbar_\delta \le a\bbar u\bbar_\delta\quad\mbox{for $u \in B$}. 
\end{equation}
This will imply the validity of~\hyperlink{H2}{(H$_2$)} with $\hat u=0$ and~$\hat \eta=0$, since we can assume from the very beginning that the space~$H^1$ is endowed with the norm~$\barr \cdot\barr_\delta$. 

\smallskip
To establish~\eqref{dissipation-CGL}, we first note that 
\begin{equation} \label{estimate-CGL}
\|S(u)\|\le q\|u\|\quad\mbox{for $u\in L^2$}, \qquad
\|S(u)\|_1\le C_B\|u\|\quad\mbox{for $u\in B$},  	
\end{equation}
where $S(u)=S(u,0)$, $q\in(0,1)$, and~$C_B>0$ depends only on~$B$. Indeed, the first inequality is a standard estimate for the $L^2$-norm of solutions, while the second can easily be derived by taking the derivative of $t\|\nabla u\|^2$; cf.\ the proof of Theorem~\ref{t-IV-CGL}. Combining inequalities~\eqref{estimate-CGL}, we derive
$$
\bbar S(u)\bbar_\delta^2 
=\|S(u)\|^2+\delta\, \|S(u)\|_1^2
\le q^2\|u\|^2+\delta\, C_B^2\|u\|^2
\le \bigl(q^2+\delta C_B^2\bigr)\,\barr u\barr_\delta^2. 
$$
Choosing $\delta>0$ so small that $q^2+\delta C_B^2<1$, we obtain inequality~\eqref{dissipation-CGL} with some $a<1$.

\smallskip
{\it Step~2: Approximate controllability\/}. We need to prove that, for any $u\in X$ and $\ell$-a.e.\ realisation of~$\eta\in E$, the image of the derivative $(DS)(u,\eta):H^1\to H^1$ is dense. To this end, we fix some functions $u\in X$ and $\eta\in E$, denote by $\tilde u\in L^2(J,H^3)\cap W^{1,2}(J,H^1)$ the solution of~\eqref{cgl} issued from~$u$, and consider the linearised problem
	\begin{equation} \label{l-cgl}
		\dot v+Lv+Q(\tilde u)v=g, \quad v(0)=v_0, 
	\end{equation}
	where $L=-(\nu+i)\Delta+\gamma$, $g\in E$, and $v_0\in H^1$. Let us denote by $v(t;v_0,g)$ the solution of~\eqref{l-cgl}. We need to prove that, for $\ell$-a.e.\ $\eta\in E$,  the vector space $\{v(1;0,g),g\in E\}$ is dense in~$H^1$. In view of parabolic regularisation, the resolving operator for~\eqref{l-cgl} with $g\equiv0$ is continuous from~$L^2$ to~$H^1$ on an arbitrary interval $J'=[T,1]$ with $0<T<1$. Since the restriction at $t=1$ of the space of solutions~$v(t)$ for the homogeneous equation on~$J'$ with $v(T)\in L^2$ is dense in~$H^1$ (see Proposition~\ref{p-lcgl}), the required property will be established if we prove that the vector space $\{v(T;0,g),g\in E\}$ is dense in~$L^2$. To this end, we repeat the scheme used in the case of the Navier--Stokes system. The difference is that the operator~$Q$ is no longer linear in~$\tilde u$, which makes the argument slightly more involved. 
	
Let us denote by $R^{\tilde u}(t,s):L^2\to L^2$ (with $0\le s\le t\le T$) the two-parameter process solving the linearised problem~\eqref{l-cgl} with $g\equiv0$ and let  
	\begin{equation} \label{gramian-cgl}
	G^{\tilde u}:=\int_0^T R^{\tilde u}(T,t){\mathsf P}_\HH R^{\tilde u}(T,t)^*\dd t,
	\end{equation}
	where $R^{\tilde u}(T,t)^*:L^2\to L^2$ is the adjoint of $R^{\tilde u}(T,t)$, and~${\mathsf P}_\HH:L^2\to L^2$ stands for the orthogonal projection to~$\HH$. We wish to prove that $\Ker(G^{\tilde u})=\{0\}$ for any $u\in X$ and $\ell$-a.e.\ $\eta\in E$. Since the union of~$\HH_k$ is dense in~$H$, it suffices to show that any element of $\Ker(G^{\tilde u})$ is orthogonal to~$\HH_k$ for all $k\ge0$. 

To this end, we fix $u\in X$ and take any $w_0\in \Ker(G^{\tilde u})$. In view of~\eqref{gramian-cgl}, we have ${\mathsf P}_\HH R^{\tilde u}(T,t)^*w_0=0$ for any $t\in J'$, so that 
\begin{equation} \label{xiw0-cgl}
	(\zeta,R^{\tilde u}(T,t)^*w_0)=0\quad \mbox{for $t\in J'$},
\end{equation}
where $\zeta\in\HH_0$ is an arbitrary vector.  Assuming that relation~\eqref{xiw0-cgl} is true for any $\zeta\in\HH_k$, we now prove its validity for $\zeta\in\HH_{k+1}$. Once this is established, we can complete the proof by taking $t=T$. 

Differentiating~\eqref{xiw0-cgl} in time and using an analogue of relation~\eqref{soldual} for~\eqref{l-cgl}, we derive 
\begin{equation} \label{xiw0der-cgl}
	\bigl(L\zeta+Q(\tilde u(t))\zeta,w(t)\bigr)=0\quad\mbox{for $t\in J'$},
\end{equation}
where $w(t)$ is given by~\eqref{soldual}.  Let us write 
\begin{align*}
\eta(t)&=\sum_{i\in\II}\eta^i(t)\varphi_i(x),
\end{align*}
where $\eta^i(t)=(\eta(t),\varphi_i)_\HH$. Differentiating~\eqref{xiw0der-cgl} in time and using~\eqref{cgl}, we obtain
\begin{equation} \label{1der}
	\bigl(L\zeta+Q(\tilde u)\zeta,\dot w\bigr)-\bigl(B_2(\tilde u;\zeta,L\tilde u+B(\tilde u)),w\bigr)
	+\sum_{i\in\II}\bigl(B_2(\tilde u;\zeta,\varphi_i),w\bigr)\eta^i(t)=0, 
\end{equation}
where $B_k(u;\cdot)$ stands for the $k^\text{th}$ derivative of~$B(u)$ (so that $Q=B_1$, and $B_k=0$ for $k\ge6$). We thus obtain relation~\eqref{obs}, in which $J=J'$,
\begin{align*}
	a_i(t)&=\bigl(B_2(\tilde u(t);\zeta,\varphi_i),w(t)\bigr),\\
	b(t)&=	\bigl(L\zeta+Q(\tilde u(t))\zeta,\dot w(t)\bigr)-\bigl(B_2(\tilde u(t);\zeta,L\tilde u(t)+B(\tilde u(t))),w(t)\bigr). 
\end{align*} 
These are Lipschitz-continuous and continuous functions, respectively, and the observability of~$\ell'$ implies that (cf.~\eqref{xiw0der-cgl})
\begin{equation*}
	\bigl(B_2(\tilde u(t);\zeta,\varphi_i),w(t)\bigr)=0
	\quad\mbox{for $i\in\II$, $t\in J'$}. 
\end{equation*}
Applying exactly the same argument three more times, we see that 
\begin{equation} \label{5der}
	\bigl(B_5(\zeta,\varphi_i,\varphi_j,\varphi_m,\varphi_n),w(t)\bigr)=0
	\quad\mbox{for $i,j,m,n\in\II$, $t\in J'$},
\end{equation}
where we used the fact that the fifth derivative of~$B(u)$ does not depend on~$u$. We see that $w(t)$ must be orthogonal to the vector space~$\VV$ spanned by $\{(B_5(\zeta,\varphi_i,\varphi_j,\varphi_m,\varphi_n)\}$. Now note that
\begin{equation} \label{B5}
B_5(\zeta,\varphi,1,1,1)
=12ic\,(3\zeta\varphi+\bar\zeta\bar\varphi +3\bar\zeta\varphi+3\zeta\bar\varphi). 
\end{equation}
Since both~$\HH$ and~$\HH_k$ are invariant under complex conjugation, it follows from relation~\eqref{B5} that~$\VV$ must contain all the products $\zeta\xi$ with $\zeta\in\HH_k$ and $\xi\in\HH$. Thus, we have $\HH_{k+1}\subset\VV$, which completes the proof. 
\end{proof}

\section{Observable processes with decomposable laws}
\label{s6}

\subsection{Observable functions}
\label{s6.1}
Let $\HH$ be a (finite or infinite-dimensional) Hilbert space with an inner product $(\cdot,\cdot)$, let~$\{\varphi_i\}_{i\in\II}$ be an orthonormal basis in~$\HH$, and let $\eta:J\to\HH$ be a function in $L^2(J,\HH)$. Recall that the concept of an observable function was introduced in Definition~\ref{d6.1}. The following two results provide sufficient conditions for a function to be observable.  

\begin{proposition} \label{p6.2}
	Let~$\eta:J\to\HH$ be a bounded measurable function such that, for any $i\in\II$, the projection $(\eta,\varphi_i)$ has left and right limits at any point of~$J$ and is discontinuous on a countable dense set~$\D_i$ and continuous on~$J\setminus\D_i$. Suppose, in addition, that $\D_i\cap\D_j=\varnothing$ for $i\ne j$. Then~$\eta$ is observable with respect to~$\{\varphi_i\}$.
\end{proposition}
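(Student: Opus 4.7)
The plan is to exploit the disjointness hypothesis $\D_i\cap\D_j=\varnothing$ to isolate each coefficient $a_{i_0}$ at the points of $\D_{i_0}$, by taking one-sided limits of the identity~\eqref{6.2}. Assume that Lipschitz functions $a_i$ (with $\sum_i\|a_i\|_{C(J)}^2<\infty$) and a continuous $b$ satisfy~\eqref{6.2}, and set $f(t)=\sum_i a_i(t)\eta^i(t)-b(t)$ with $\eta^i(t)=(\eta(t),\varphi_i)_{\HH}$; then $f(t)=0$ for a.e.\ $t\in J$. It is convenient to rewrite this as $f(t)=(\eta(t),\alpha(t))_{\HH}-b(t)$, where $\alpha(t):=\sum_i a_i(t)\varphi_i$. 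The series converges in $\HH$ since $\sum_i a_i(t)^2\le\sum_i\|a_i\|_\infty^2<\infty$, and dominated convergence for the counting measure (using the summable bound $|a_i(t)-a_i(s)|^2\le 4\|a_i\|_\infty^2$) shows that $\alpha:J\to\HH$ is continuous.

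The key analytic step is to verify that, for every $t_0\in J$, the one-sided limits
$$
\lim_{t\to t_0^{\pm}}(\eta(t),\alpha(t))_{\HH}=(\xi^{\pm},\alpha(t_0))_{\HH},\qquad \xi^{\pm}:=\sum_j\eta^j(t_0\pm)\varphi_j\in\HH,
$$
exist. Splitting
$$
(\eta(t),\alpha(t))-(\xi^{+},\alpha(t_0))=(\eta(t),\alpha(t)-\alpha(t_0))+\sum_i a_i(t_0)(\eta^i(t)-\eta^i(t_0+)),
$$
the first term goes to zero by continuity of $\alpha$ and boundedness of $\eta$. For the second, a truncation argument handles the infinite-dimensional case: given $\varepsilon>0$, pick $I$ with $\sum_{i>I}a_i(t_0)^2<\varepsilon^2$; Cauchy--Schwarz and $\|\eta(t)-\xi^{+}\|_{\HH}\le 2M$ bound the tail by $2M\varepsilon$, while the finite sum $\sum_{i\le I}$ tends to $0$ by pointwise convergence of each $\eta^i(t)\to\eta^i(t_0+)$. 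Since $\{f=0\}$ has full measure in every right half-neighbourhood of $t_0$, we may pick $t_n\to t_0^{+}$ with $f(t_n)=0$ and pass to the limit to deduce $(\xi^{+},\alpha(t_0))_{\HH}=b(t_0)$, and analogously $(\xi^{-},\alpha(t_0))_{\HH}=b(t_0)$. Subtracting yields
$$
(\xi^{+}-\xi^{-},\alpha(t_0))_{\HH}=0.
$$

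Now fix $i_0\in\II$ and $t_0\in\D_{i_0}$. By disjointness, $t_0\notin\D_j$ for $j\ne i_0$, so $\eta^j(t_0+)=\eta^j(t_0-)$; hence $\xi^{+}-\xi^{-}=c_0\varphi_{i_0}$ with $c_0=\eta^{i_0}(t_0+)-\eta^{i_0}(t_0-)$. Interpreting the hypothesis ``discontinuous with existing one-sided limits'' as $c_0\ne0$ (the natural notion of a jump discontinuity, and the situation in all the applications we have in mind), we obtain $a_{i_0}(t_0)=0$. Density of $\D_{i_0}$ in $J$ and continuity of $a_{i_0}$ then force $a_{i_0}\equiv 0$. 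Since $i_0$ was arbitrary, all $a_i$ vanish identically; feeding this back into~\eqref{6.2} gives $b\equiv0$ in $L^1$, and by continuity $b\equiv0$ pointwise.

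The main obstacle is the justification of the one-sided limits of $(\eta(t),\alpha(t))_{\HH}$ at a prescribed $t_0$: a priori one only has coordinate-wise (hence weak) convergence $\eta(t)\rightharpoonup\xi^{\pm}$ in $\HH$, which is not enough to pass limits inside a general inner product. The resolution is the tail truncation above, which trades strong convergence of $\eta(t)$ for the square-summability of $\{a_i(t_0)\}$, and is precisely where hypothesis~\eqref{6.1} is used. Everything else is routine.
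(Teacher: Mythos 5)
Your proof is correct and follows essentially the same route as the paper's: compute the one-sided limits (hence the jump) of $\sum_i a_i(t)(\eta(t),\varphi_i)_\HH$ at a point of $\D_{i_0}$, use continuity of~$b$ and the a.e.\ identity~\eqref{6.2} to force that jump to vanish, and use the disjointness of the sets~$\D_i$ to isolate $a_{i_0}(t_0)$, then conclude by density of~$\D_{i_0}$. The only cosmetic differences are that you handle the series by pointwise truncation and Cauchy--Schwarz where the paper invokes uniform convergence of the series (both resting on~\eqref{6.1} and the boundedness of~$\eta$), and you explicitly flag the reading of ``discontinuous'' as a genuine jump $\eta^{i_0}(t_0^+)\ne\eta^{i_0}(t_0^-)$, which is exactly the interpretation the paper uses as well.
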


\begin{proof}
We confine ourselves to the case of an infinite-dimensional space~$\HH$. Let~$a_i$ be Lipschitz-continuous functions and~$b$ a continuous function satisfying~\eqref{6.1} and~\eqref{6.2}. The boundedness of~$\eta$ implies that the series $\sum_{i}a_i(t)(\eta(t),\varphi_i)$ converges uniformly in $t\in J$. It follows that its sum~$\zeta(t)$ has left and right limits at any point of~$J$. Let us fix any $i\in\II$ and $s\in\D_i$. Since $s\notin\D_j$ for $j\ne i$ and $(\eta(t),\varphi_j)$ is continuous on~$J\setminus\D_j$, we see that
\begin{equation} \label{6.3}
\Delta\zeta(s):=\zeta(s^+)-\zeta(s^-)=a_i(s)\bigl(\eta(s^+)-\eta(s^-),\varphi_i\bigr).	
\end{equation}
Since~$b$ is continuous on~$J$, it follows from~\eqref{6.2} that the expression in~\eqref{6.3} must vanish. By the hypotheses of the proposition, the function $(\eta,\varphi_i)$ has a jump at~$s$, whence we conclude that $a_i(s)=0$. Since $s\in\D_i$ is arbitrary and~$\D_i$ is dense, the function~$a_i$ is identically zero for any $i\in\II$, so that~$b$ also vanishes. 
\end{proof}

We now consider the case of a finite set~$\II$ and write $\II=\{1,\dots,N\}$. Let us fix an orthonormal basis~$\{\varphi_i\}$ and set $\eta=(\eta^1,\dots,\eta^N)$. 

\begin{proposition} \label{p6.3}
	Let the functions $\eta^i$ have left and  right  limits at any point of~$J$, and denote by~$\Delta \eta^i(t)$ the jump of~$\eta^i$ at~$t$. Suppose that, for any $s\in J$ and $\e\in(0,1]$, there are $t_1^\e,\dots,t_N^\e\in[s-\e,s+\e]$ such that the $N\times N$ matrix~$R_\e(s)$ with the entries  $r_l^i:=\Delta\eta^i(t_l^\e)$ is invertible and satisfies the inequality
	\begin{equation} \label{6.4}
		\bigl\|R_\e^{-1}(s)\bigr\|\le C\e^{-\theta},
	\end{equation}
	where $C>0$ and $\theta\in(0,1)$ do not depend on~$\e$. Then~$\eta$ is observable. 
\end{proposition}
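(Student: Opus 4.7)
The plan is to promote the $L^1$-identity in~\eqref{6.2} to a pointwise jump identity, then apply it at the $N$ well-chosen instants $t_l^\e$ given by the hypothesis, and finally linearise the resulting system around the base point $s$ using the Lipschitz regularity of the $a_i$'s.

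First I would observe that, since $\eta^1,\dots,\eta^N$ admit left and right limits at every point of the compact interval $J$, they are bounded and have at most countably many discontinuities. In particular, the equality $\sum_i a_i(t)\eta^i(t)=b(t)$, valid in $L^1(J)$, actually holds on a set of full measure, and hence on a set meeting every open subinterval. Fixing $s\in J$ and choosing sequences $t_n\to s^{\pm}$ at which the equality is true, the Lipschitz continuity of the $a_i$, the continuity of $b$, and the existence of one-sided limits of $\eta^i$ yield
\begin{equation*}
\sum_{i=1}^N a_i(s)\,\eta^i(s^{\pm})=b(s).
\end{equation*}
Subtracting the two, I obtain the pointwise jump relation
\begin{equation}\label{6.5-plan}
\sum_{i=1}^N a_i(s)\,\Delta\eta^i(s)=0\qquad\text{for every }s\in J.
\end{equation}

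Next I would apply~\eqref{6.5-plan} at each of the instants $t_1^\e,\dots,t_N^\e$ provided by the hypothesis. This produces the $N$ scalar equations
\begin{equation*}
\sum_{i=1}^N a_i(t_l^\e)\,r_l^i=0,\qquad l=1,\dots,N,
\end{equation*}
which are not literally linear in the column vector $\mathbf a(s):=(a_1(s),\dots,a_N(s))^\top$ because $a_i$ is evaluated at $t_l^\e$ rather than at $s$. I would then linearise around $s$: writing $a_i(t_l^\e)=a_i(s)+\delta_i^{(l)}$ with $|\delta_i^{(l)}|\le L\e$ by the Lipschitz property, and using that the jumps $r_l^i=\Delta\eta^i(t_l^\e)$ are bounded uniformly (regulated functions on a compact interval are bounded, so their jumps are), I obtain the matrix identity
\begin{equation*}
R_\e(s)\,\mathbf a(s)=\mathbf g_\e,\qquad |\mathbf g_\e|\le C_1\e,
\end{equation*}
with $C_1$ independent of $s$ and $\e$. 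Inverting $R_\e(s)$ and using the bound~\eqref{6.4} gives $|\mathbf a(s)|\le C C_1\,\e^{1-\theta}$. Since $\theta<1$, sending $\e\to0$ forces $\mathbf a(s)=0$; as $s\in J$ is arbitrary (interior points suffice by continuity to conclude on all of $J$), each $a_i$ vanishes identically, and then $b\equiv0$ follows from~\eqref{6.2}.

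The main obstacle, and the reason the hypothesis is formulated the way it is, lies in the mismatch between the pointwise identity~\eqref{6.5-plan}, which couples $a_i(t_l^\e)$ with $\Delta\eta^i(t_l^\e)$ at the \emph{same} point, and the desired conclusion about $a_i(s)$. The Lipschitz regularity of the $a_i$ introduces an $O(\e)$ perturbation, so the quantitative invertibility bound $\|R_\e^{-1}\|\le C\e^{-\theta}$ with $\theta<1$ is precisely what is needed to overpower this error; any $\theta\ge1$ would block the argument. The verification of~\eqref{6.5-plan} from an $L^1$-identity, while conceptually straightforward, also deserves some care and relies crucially on the fact that functions with one-sided limits are determined off a negligible set.
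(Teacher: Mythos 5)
Your argument is correct and follows essentially the same route as the paper's own proof: evaluate the jump relation $\sum_i a_i(t_l^\e)\,\Delta\eta^i(t_l^\e)=0$ at the points $t_l^\e$, use the Lipschitz continuity of the $a_i$ to replace $a_i(t_l^\e)$ by $a_i(s)$ up to an $O(\e)$ error, and invert $R_\e(s)$ via~\eqref{6.4} to get $|a(s)|\le C\e^{1-\theta}\to0$. The only difference is that you spell out the passage from the $L^1$-identity~\eqref{6.2} to the pointwise jump relation, a step the paper leaves implicit; your justification of it is sound.
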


\begin{proof}
Let $a_1,\dots,a_N$ be real-valued Lipschitz-continuous functions and let~$b$ a continuous function such that~\eqref{6.2} holds. We fix an arbitrary $s\in J$ and, given $\e>0$, find points $t_1^\e,\dots,t_N^\e$ satisfying the properties mentioned in the statement. It follows from~\eqref{6.2} that 
$$
\sum_{i=1}^N r_l^ia_i(t_l^\e)=0, \quad l=1,\dots,N. 
$$
Setting $a=(a_1,\dots,a_N)$ and using the Lipschitz-continuity of~$a$, we derive the relation
$$
R_\e(s)a(s)=d_\e(s), 
$$
where $|d_\e(s)|\le C_1\e$. Applying the matrix $R_\e^{-1}(s)$ and recalling~\eqref{6.4}, we obtain $|a(s)|\le C_2\e^{1-\theta}$, where $C_2$ does not depend on~$\e$. Since~$s$ and~$\e$ were arbitrary, we see that $a\equiv0$ and, hence, $b\equiv0$. 
\end{proof}

\subsection{Processes satisfying the hypotheses of Section~\ref{s4.1}}
\label{s6.2}
We now construct some examples of stochastic processes on~$[0,1]$ that possess the decomposability and observability properties. The decomposability will be a simple consequence of the explicit form of the processes, whereas observability will follow from a finer analysis based on Propositions~\ref{p6.2} and~\ref{p6.3}. In all the examples, we deal with the observability on the interval $J=[0,1]$; however, exactly the same argument shows that the result remains true on the  interval~$[0,T]$ with any $T\in(0,1)$. 

\subsubsection*{Jump process}
Let~$\HH$ be a Hilbert space with an orthonormal basis~$\{\varphi_i\}_{i\in\II}$. Suppose that, for any $i\in\II$, we are given an orthonormal basis $\{\psi_l^i\}_{l\ge1}$ in~$L^2(J)$ such that the following properties hold:
\begin{itemize}
\item[\bf(a)]
the function~$\psi_l^i$ is continuous outside a finite set~$\D_l^i$ and has different left and right limits at the points of~$\D_l^i$;
\item[\bf(b)]
the union $\cup_l\D_l^i:=\D_i$ is dense in~$J$ for each $i\in\II$; 
\item[\bf(c)]
the intersection $\D_{i_1}\cap \D_{i_2}$ is empty for $i_1\ne i_2$. 
\end{itemize}
Let us define  a stochastic process by the relation 
\begin{equation} 	\label{6.7}
	\eta(t)=\sum_{i\in\II}\eta^i(t)\varphi_i, \quad 
	\eta^i(t)=\sum_{l=1}^\infty b_{il}\xi_{l}^i\psi_l^i(t),
\end{equation}
where $\xi_{l}^i$ are independent random variables whose laws possess Lipschitz-continuous densities supported by~$[-1,1]$ and $b_{il}\in\R$ are non-zero numbers such that
\begin{equation} \label{6.6}
\sum_{i\in\II}\sum_{l=1}^\infty
	\bigl(b_{il}^2+|b_{il}|\,\|\psi_l^i\|_\infty\bigr)<\infty.
\end{equation}
It is obvious that the law~$\ell$ of~$\eta$ is a decomposable measure\,\footnote{Note that the index~$j$ in~\eqref{1.3} is now replaced by the pair $(l,i)$, and vectors~$e_j$ are the products $\psi_l^i(t)\varphi_i$.} on~$L^2(J,\HH)$. We claim that~$\ell$ is observable with respect to~$\{\varphi_i\}$. Indeed, it follows from~\eqref{6.6} that the series in~\eqref{6.7} converges uniformly in~$t$ and~$\omega$, so that the trajectories of~$\eta^i$ have left and right limits at any point and are continuous outside~$\D_i$. If we prove that they are a.s.\ discontinuous at any point of~$\D_i$, then Proposition~\ref{p6.2}  will immediately imply the observability of~$\eta$ with respect to~$\{\varphi_i\}$. 

Let us fix any point $s\in\D_i$ and calculate the jump of~$\eta^i$ at~$s$. It follows from~\eqref{6.7} that 
$$
\Delta\eta^i(s)=\sum_{l=1}^\infty b_{il}\xi_l^i\bigl(\psi_l^i(s^+)-\psi_l^i(s^-)\bigr).
$$
This series converges absolutely, and at least one of the terms $\psi_l^i(s^+)-\psi_l^i(s^-)$ is non-zero. Since the random variables~$\xi_l^i$ are independent, and  their laws have densities, we conclude that so does the law of $\Delta\eta^i(s)$. Therefore, $\Delta\eta^i(s)\ne0$ with probability~$1$. 

\subsubsection*{Haar process with exponentially decaying coefficients}
Let us recall the definition of the Haar system~$\{h_0,h_{jl}\}$. We set 
\begin{align}
	h_0(t)&=\left\{
	\begin{array}{cl}
		1 & \mbox{for $0\le t<1$},\\[2pt]
		0 & \mbox{for $t<0$ or $t\ge1$},
	\end{array}
\right.
\label{6.8}
\\
	h_{jl}(t)&=\left\{
	\begin{array}{cl}
		0 & \mbox{for $t<l2^{-j}$ or $t\ge (l+1)2^{-j}$},\\[2pt]
		1 & \mbox{for $l2^{-j}\le t<\bigl(l+\tfrac12\bigr)2^{-j}$},\\[2pt]
		-1 & \mbox{for $\bigl(l+\tfrac12\bigr)2^{-j}\le t<(l+1)2^{-j}$}, 
	\end{array}
\right.
\label{6.9}
\end{align}
where $j\ge1$ and~$l\ge0$ are integers. It is well known that the restrictions of the functions $\{h_0,h_{jl},j\ge0, 0\le l\le 2^j-1\}$ to the interval~$J=[0,1]$ form an orthogonal basis in~$L^2(J)$; see Section~22 in~\cite{lamperti1996}. This implies that the system of functions $\{h_0(\cdot+k), h_{jl}(\cdot)\}$, where $j\ge1$ and $k,l\ge 0$, form an orthogonal basis of~$L^2(\R_+)$.  

Let us assume that~$\HH$ is an $N$-dimensional Euclidean space. We fix an orthonormal basis~$\{\varphi_i,1\le i\le N\}\subset\HH$ and consider the following process in~$\HH$:
\begin{equation} \label{6.10}
	\eta(t)=\sum_{i=1}^Nb_i\biggl(\xi_0^ih_0(t)
	+\sum_{j=1}^\infty\sum_{l=0}^{2^j-1}c_j\xi_{jl}^ih_{jl}(t)\biggr)\varphi_i,
	\quad t\in J,
\end{equation}
where $b_i\in\R$ are non-zero numbers, $c_j=A^{-j}$ with some $A>1$, and~$\xi_{jl}^i$ are i.i.d.\ scalar random variables such that $|\xi_{jl}^i|\le1$ almost surely, and their law possesses a continous density with respect to the Lebesgue measure. For any $j\ge1$, any point of~$J$ belongs to the support of at most two functions~$h_{jl}$, so that the series in~\eqref{6.10} absolutely converges, uniformly with respect to~$t\in J$. Since the functions~$\{h_0\varphi_i,h_{jl}\varphi_i\}$ form an orthonormal basis in~$L^2(J,\HH)$, the law $\ell\in\PP(L^2(J,\HH))$ of~$\eta$ is decomposable, and any random variable with law~$\ell$ has the form~\eqref{6.10}. Let us prove that, if $A>1$ is sufficiently close to~$1$, then a.e.\ trajectory of~$\eta$ is observable.

We shall apply Proposition~\ref{p6.3}. Let us fix a point $s\in J$ and a number $\e>0$ and consider the points
\begin{equation} \label{6.11}
	\tau_l^j:=\bigl(l+\tfrac12\bigr)2^{-j}, \quad 0\le l\le 2^j-1.
\end{equation}
It is clear that if an integer~$j$ of the form  $C_1\log\frac1\e$ is fixed, then there are~$N$ points 
$$
\tau_{l_1}^j=:\tau_1,\quad\dots, \quad \tau_{l_N}^j=:\tau_N, \quad l_r=l_0+r,
$$
that belong to the $\e$-neighbourhood of~$s$. Consider the matrix $R_\e(s)$ corresponding to the points $\tau_1,\dots,\tau_N$. Its entries are $r_m^i=\Delta\eta^i(\tau_m)=\Delta\eta^i(\tau_{l_m}^j)$, where  
\begin{align}
	\Delta\eta^i(\tau_l^j)
	&=b_i\biggl(-2c_j\xi_{jl}^i+\sum_{r=1}^\infty 
	c_{j+r}\bigl(\xi_{j+r,(2l+1)2^{r-1}}^i-\xi_{j+r,(2l+1)2^{r-1}-1}^i\bigr)\biggr) 
	\notag\\
	&= b_iA^{-j}\zeta_l^i(j). \label{6.12}
\end{align}
Here $\{\zeta_l^i(j)\}$ are i.i.d.\ random variables whose law has a continuous density with respect to the Lebesgue measure. It follows that the determinant of the matrix $R_\e(s)$ can be written as 
\begin{equation} \label{6.13}
	\det R_\e(s)=b_1\cdots b_N A^{-Nj}\Sigma(j),
\end{equation}
where $\Sigma(j)$ is the determinant of the matrix $(\zeta_{l_m}^i(j),1\le i,m\le N)$. Notice that the entries of this matrix are i.i.d.\ random variables whose law does not depend on~$j$. Since the determinant is a non-zero polynomial of the matrix entries, Corollary~\ref{r2.9} applies and gives the inequality
\begin{equation} \label{6.14}
\IP\{|\Sigma(j)|\le r\}\le Cr^c\quad\mbox{for any $r\in[0,1]$},
\end{equation}
where $C$ and $c$ are positive numbers not depending on~$j$. Taking $r=e^{-\delta j}$, where $\delta>0$ is sufficiently small and will be chosen below, we derive 
$$
	\IP\{|\Sigma(j)|\le e^{-\delta j}\}\le Ce^{-c\delta j}
	\quad\mbox{for any $j\ge0$}.
$$
By the Borel--Cantelli lemma, there is an almost surely finite random integer $j_0\ge1$ such that
$$
	|\Sigma(j)|>e^{-\delta j}\quad\mbox{for $j\ge j_0$}. 
$$
Combining this with~\eqref{6.13}, we derive
\begin{equation} \label{6.15}
|\det R_\e(s)|\ge C_2e^{-j(N\log A+\delta)},
\end{equation}
where $j\ge j_0$ is arbitrary. For $j\sim C_1\log\frac1\e$ (such a choice is possible for a.e.~$\omega$ and sufficiently small~$\e$), we obtain
$$
|\det R_\e(s)|\ge C_2\e^\theta,
$$
where $\theta=C_1(N\log A+\delta)$. Since the entries of~$R_\e(s)$ are a.s.\ bounded by a universal number, we conclude that~\eqref{6.4} holds. Taking $\delta=\log A$ and $A>1$ sufficiently close to~$1$, we see that~$\theta<1$, and Proposition~\ref{p6.3} implies the required result.  

\subsubsection*{Haar process with algebraically decaying coefficients}
We consider again process~\eqref{6.10}, in which the numbers~$c_j$ go to zero at an algebraic  rate:
\begin{equation} \label{6.16}
	c_j=Cj^{-q}\quad\mbox{for all $j\ge 1$},
\end{equation}
where $C>0$ and $q>1$ are some numbers. For the same reasons as in the previous case, the series in~\eqref{6.10} absolutely converges, uniformly in $t\in J$, and the law of~$\eta$ is a decomposable measure on~$L^2(J,\HH)$. We claim that, for any $T\in(0,1]$, a.e.\ trajectory of~$\eta$ is observable on $J'=[0,T]$. To prove it, we apply the same argument as before. The first line of~\eqref{6.12} remains true, and the jumps take the form $\Delta \eta^i(\tau^j_l)=Cb_ij^{1-q}\zeta_l^i(j)$, where 
$$
	\zeta_l^i(j)=-2j^{-1}\xi_{jl}^i+j^{-1}
	\sum_{r=1}^\infty (1+rj^{-1})^{-q}
	\bigl(\xi_{j+r,(2l+1)2^{r-1}}^i-\xi_{j+r,(2l+1)2^{r-1}-1}^i\bigr). 
$$
It follows that the determinant of~$R_\e(s)=(r_m^i)$ can be written as (cf.~\eqref{6.13})
$$
\det R_\e(s)=C^Nb_1\cdots b_Nj^{-N(q-1)}\Sigma(j),
$$
where $\Sigma(j)$ is the determinant of the matrix $(\zeta_{l_m}^i(j))$. A simple calculation shows that $\{\zeta_{l_m}^i(j)\}$ are independent random variables that are a.s.\ bounded by a universal number, and their laws do not depend on~$i$ and~$m$. Suppose we have shown that the laws of $(\zeta_l^i(j))$ possess densities~$\rho_j$ (with respect to the Lebesgue measure) satisfying the inequality 
\begin{equation}\label{rhojrho}
	\rho_j(r)\le j\rho(r)\quad\mbox{for all $j\ge1$, $r\in\R$},
\end{equation}	
where $\rho:\R\to\R$ is a continuous function with compact support. Applying Corollary~\ref{r2.9}, we conclude that inequality~\eqref{6.14} remains valid with~$C$ replaced by~$Cj^N$. Hence, by the same argument as before, we obtain (cf.~\eqref{6.15})
\begin{equation} \label{6.17}
	|\det R_\e(s)|\ge C_2e^{-\delta j-N(q-1)\log j}. 
\end{equation}
The proof can now be completed as in the case of exponentially decaying coefficients. 

It remains to prove~\eqref{rhojrho}. To this end, we write $\zeta_l^i(j)=-2j^{-1}\xi_{jl}^i+\eta_l^i(j)$, where~$\eta_l^i(j)$ is a bounded random variable independent of $\xi_{jl}^i$. By the hypotheses, the law of~$\xi_{jl}^i$ has a bounded continuous density not depending on~$i$, $j$, and~$l$. Let us denote by~$M$ its maximum. Then the density~$\tilde\rho_j$ of the law for $-2j^{-1}\xi_{jl}^i$ is bounded by~$Mj/2$. Since~$\rho_j$ is a convolution with~$\tilde\rho_j$, it is bounded by the same constant. On the other hand, we know that~$\zeta_l^i(j)$ is almost surely bounded by a universal number. It follows that~\eqref{rhojrho} holds for some continuous function~$\rho$ with compact support. 

\section{Examples of saturating subspaces}
\label{s-satset}

In this section, we discuss some algebraic conditions that ensure the saturating property of a given subspace. This type of conditions now are rather well known in the control theory and Malliavin calculus for PDEs; see~\cite{EM-2001,AS-2006,HM-2006,MP-2006}. 

\subsection*{Navier--Stokes system}
Let us endow the space~$\R^2$ with the scalar product 
$$
  \lag x,y\rag_a=\sum_{i=1}^2 a_i^{-1}x_i\,y_i
  $$ and the corresponding norm   $|x|_a=\sqrt{\lag x,x\rag_a}$. In the case $a=(1,1)$, we write $\lag \cdot,\cdot\rag$ and~$|\cdot|$. Let~$ \Z^2_*$ be the set of non-zero integer vectors $l=(l_1,l_2)\in\Z^2$. For~$l\in \Z^2_*$, we define the functions
$$
e^a_l(x)= \begin{cases} c_l^a(x)  & \text{if }l_1>0\text{ or } l_1=0,\, l_2>0, \\ s_l^a(x)   & \text{if }l_1<0\text{ or } l_1=0,\, l_2<0 \end{cases}
$$
on~$\T_a^2$, where
  $$
  c^a_l(x) =  l^{\bot_a}\cos\lag l,x\rag_a, \quad  s^a_l(x) =  l^{\bot_a} \sin\lag l,x\rag_a,  \quad 
  l^{\bot_a}=(-a_2^{-1}l_2,a_1^{-1}l_1). 
  $$
  For any  subset $\II\subset \Z^2$, let  $\Z^2_\II$ be the set of all vectors in~$\Z^2$ that can be represented as  finite linear combinations of elements of~$\II$ with integer coefficients. We shall say that   $\II $ is  a {\it generator\/} if $\Z^2_\II= \Z^2$. 

\smallskip
Let $\II\subset \Z^2_*$ be a finite symmetric  set (i.e., $-\II=\II$) and let 
\begin{equation}\label{0.03}
\HH(\II)= \textup{span} \{e_l^a: l\in \II\}.
\end{equation}
We denote by $\HH_k(\II)\subset H^3$ the spaces defined in Section~\ref{s4.1} with $\HH=\HH(\II)$.

\begin{proposition}\label{P:1} 
Let $\II\subset\Z_*^2$ be a finite symmetric set. Then the space $\HH(\II)$ is saturating  if and only if~$\II$ is a generator and contains at least two non-parallel elements~$m$ and~$n$ such that~$|m^{\bot_a}|\neq |n^{\bot_a}|$. 
\end{proposition}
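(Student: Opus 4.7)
I would first compute $Q(e^a_m, e^a_n)$ explicitly for $m, n \in \Z^2_*$ with $m\ne\pm n$. Using product-to-sum trigonometric identities together with the fact that $\Pi$ acts in Fourier space as orthogonal projection onto $k^{\bot}\subset\R^2$ at wavenumber~$k$, one finds
\begin{equation*}
\Pi Q(e^a_m, e^a_n)\in \lspan\{e^a_{m+n}, e^a_{m-n}, e^a_{-(m+n)}, e^a_{-(m-n)}\},
\end{equation*}
with coefficients that factor as $\lag m^{\bot_a}, n\rag$ (responsible for non-parallelism) times a quantity proportional to $|m^{\bot_a}|^2 - |n^{\bot_a}|^2$ (produced by the Leray projection in the rescaled basis). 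The two hypotheses of the proposition on a pair $(m,n)\subset\II$ thus force these coefficients to be non-zero. Varying over the four sign choices, which is legitimate because $\II=-\II$, the brackets $Q(e^a_{\pm m}, e^a_{\pm n})$ produce both cosine- and sine-type modes at positions $m\pm n$; solving the resulting $4\times 4$ linear system isolates $e^a_{m+n}$ and $e^a_{m-n}$ individually in $\HH_1$.

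\textbf{Induction and density.} I would then prove by induction that $\HH_k$ contains $\lspan\{e^a_l : l \in \II_k\}$ for an increasing family of symmetric sets $\II_k\subset\Z^2_*$ with $\II_0 = \II$ and such that every $l \in \II_{k+1}\setminus\II_k$ can be written as $p+r$ with $p\in\II_k$, $r\in\II$, and $(p,r)$ non-parallel with $|p^{\bot_a}|\ne|r^{\bot_a}|$. The good pair present in $\II$ provides the initial enlargement, and the generator hypothesis, combined with an elementary combinatorial argument exploiting the freedom to choose $r\in\II$ at each step, yields $\cup_k \II_k = \Z^2_*$. Density of $\cup_k \HH_k$ in $H$ then follows from the completeness of the Fourier basis $\{e^a_l\}_{l\in\Z^2_*}$.

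\textbf{Necessity and main obstacle.} For necessity, there are two cases. If $\Z^2_\II \subsetneq \Z^2$, an immediate induction based on the fact that $Q$ sends Fourier positions $p, q$ to $p \pm q$ shows that $\HH_k$ remains inside the $H$-closure of $\lspan\{e^a_l : l \in \Z^2_\II\}$ for every $k$, which is not dense in $H$. If $\II$ contains no pair of non-parallel elements with distinct $|\cdot^{\bot_a}|$-norms, then for every pair $(m,n)\in\II\times\II$ at least one of the factors $\lag m^{\bot_a}, n\rag$ or $|m^{\bot_a}|^2-|n^{\bot_a}|^2$ vanishes, hence $Q$ is identically zero on $\HH(\II)\times\HH(\II)$ and $\HH_k = \HH_0 = \HH(\II)$ for all $k$, which is finite-dimensional and not dense. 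The principal difficulty in the plan is the bookkeeping in the inductive sufficiency step: one must ensure that at every stage a good pair $(p,r)\in\II_k\times\II$ remains available and that the coefficients $\alpha^\pm(p,r)$ do not vanish accidentally along the orbit. This is a variant of the Agrachev--Sarychev saturation mechanism (cf.\ Sect.~6 of \cite{AS-2008}), with the rectangular torus entering only through the rescaled inner product $\lag\cdot,\cdot\rag_a$, which is precisely what brings the non-degeneracy condition into the form $|m^{\bot_a}|\ne|n^{\bot_a}|$.
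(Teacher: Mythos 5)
Your proposal is correct and follows essentially the same route as the paper: the explicit Fourier computation of $Q$ together with the Leray projection producing the coefficient $\langle m^{\bot_a},n\rangle_a\bigl(|m^{\bot_a}|^2-|n^{\bot_a}|^2\bigr)$, the isolation of the modes at $m\pm n$ by linear combinations of the products of cosine- and sine-type elements, the iterative enlargement of symmetric sets $\II_k$, and the same two observations for necessity (spectral confinement to $\Z^2_\II$, and vanishing of $Q$ on $\HH(\II)\times\HH(\II)$ when no good pair exists). The only caveat is that the combinatorial exhaustion $\bigcup_k\II_k=\Z^2$ is not ``elementary'' and is not reproved in the paper either --- it is precisely Proposition~1 of \cite{AS-2006} (Proposition~4.4 of \cite{HM-2006}), which the paper cites, so your appeal to the Agrachev--Sarychev saturation mechanism is the same move.
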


This result implies that the space   $\HH(\II)$ is saturating when    
\begin{align*}
	\II&=\{(1,0),(-1,0),(1,1), (-1,-1)\}&&\mbox{for $a=(1,1)$},
	\\\II&=\{(1,0),(-1,0),(0,1), (0,-1)\}& &\mbox{for $a=(\lambda,1)$ with $\lambda\ne 1$}.
\end{align*}
\begin{proof}[Proof of Proposition \ref{P:1}]
{\it Step 1\/}. 
We first establish some relations for Leray's projection~$\Pi$ and the quadratic function~$Q$ defined in Section~\ref{s4.1}. For any $l\in \R^2_*$, let us denote by $P(l):\R^2\to \R^2$ the orthogonal projection onto $\lspan(l)$ with respect to the scalar product $\lag \cdot,\cdot\rag$, so that $P(l)b=|l|^{-2}\langle b,l\rangle l$. We claim that 
\begin{align}
	\Pi \bigl(b \cos\lag l,x \rag_a\bigl)
	&= P(l^{\bot_a})b\,\cos\lag l,x \rag_a, 
	\label{pic}\\
	\Pi \bigl(b \sin\lag l,x \rag_a\bigl)
	&= P(l^{\bot_a})b\,\sin\lag l,x \rag_a, 
	\label{sic}\\ 
	2 Q(c_l^a,c_r^a)&=
	\Pi\bigl(\langle l^{\bot_a},r\rangle_a r^{\bot_a}-\langle r^{\bot_a},l\rangle_a l^{\bot_a}\bigr)\sin\langle l-r,x\rangle_a
\notag\\
	&\quad	-\Pi\bigl(\langle l^{\bot_a},r\rangle_a r^{\bot_a}+\langle r^{\bot_a},l\rangle_a l^{\bot_a}\bigr)\sin\langle l+r,x\rangle_a,  \label{ckcl}\\
	2Q(c_l^a,s_r^a)&= \Pi\bigl(\langle l^{\bot_a},r\rangle_a r^{\bot_a}+\langle r^{\bot_a},l\rangle_a l^{\bot_a}\bigr)\cos\langle l+r,x\rangle_a
\notag\\
	&\quad +\Pi\bigl(\langle l^{\bot_a},r\rangle_a r^{\bot_a}-\langle r^{\bot_a},l\rangle_a l^{\bot_a}\bigr)\cos\langle l-r,x\rangle_a,\label{cksl}\\
	2Q(s_l^a,s_r^a)&=
	\Pi\bigl(\langle l^{\bot_a},r\rangle_a r^{\bot_a}-\langle r^{\bot_a},l\rangle_a l^{\bot_a}\bigr)\sin\langle l-r,x\rangle_a
\notag\\
	&\quad +\Pi\bigl(\langle l^{\bot_a},r\rangle_a r^{\bot_a}+\langle r^{\bot_a},l\rangle_a l^{\bot_a}\bigr)\sin\langle l+r,x\rangle_a, \label{sksl}
\end{align}
We confine ourselves to the proof of~\eqref{pic} and~\eqref{ckcl}, since the other relations can be established in a similar way. Let us recall that Leray's projection of a function~$v\in L^2(\T_a^2,\R^2)$ can be written as $\Pi v=v-\nabla (\Delta^{-1}\diver v)$, where $\Delta^{-1}$ is the inverse of the Laplacian with range in the space of functions with zero mean value. Relation~\eqref{pic} is now implied by the following simple formulas:
$$
\diver \bigl(b \cos\lag l,x \rag_a\bigl)=-\langle b,l\rangle_a \sin\lag l,x \rag_a\,, \quad 
\Delta^{-1}\sin\lag l,x \rag_a
	=-\bigl|l^{\bot_a}\bigr|^{-2}\sin\lag l,x \rag_a\,. 
$$

To prove~\eqref{ckcl}, we write 
$$
\langle c_l^a, \nabla \rangle c_r^a
=\langle l^{\bot_a},r\rangle_a r^{\bot_a} \cos\langle l,x\rangle_a \sin\langle r,x\rangle_a.
$$
Interchanging the roles of~$l$ and~$r$, using elementary formulas for the product of trigonometric functions, and recalling the definition of~$Q$, we arrive at~\eqref{ckcl}. 

\smallskip
{\it Step 2\/}. 
Let us prove that if $l,r\in \Z_*^2$ are non-parallel vectors such that $|l^{\bot_a}|\neq |r^{\bot_a}|$, $c_r^a, s_r^a\in\HH(\II)$, and~$c_l^a, s_l^a\in  \HH_k(\II)$ for some $k\ge0$, then the functions $c_{l+r}^a, c_{l-r}^a, s_{l+r}^a, s_{l-r}^a$ belong to~$\HH_{k+1}(\II)$. Indeed, a direct verification shows that 
\begin{multline}
P((l-r)^{\bot_a})\bigl(\langle l^{\bot_a},r\rangle_a r^{\bot_a}-\langle r^{\bot_a},l\rangle_a l^{\bot_a}\bigr)\\
=|(l-r)^{\bot_a}|^{-2}\langle l^{\bot_a},r\rangle_a	\bigl(|l^{\bot_a}|^2-|r^{\bot_a}|^2\bigr)(l-r)^{\bot_a}. 
\label{proj}
\end{multline}
Let $C_a(l,r)$ be the coefficient in front of $(l-r)^{\bot_a}$. Combining~\eqref{ckcl}, \eqref{sksl}, \eqref{sic}, and~\eqref{proj}, for any $\alpha,\beta\in\R$, we derive 
\begin{align}
	Q(c_l^a,2\alpha c_r^a)+Q(s_l^a,2\beta s_r^a) 
	&=C_a(l,r)(\alpha+\beta)(l-r)^{\bot_a}\sin\langle l-r,x\rangle_a
	\notag\\
	&\quad -C_a(l,-r)(\alpha-\beta)(l+r)^{\bot_a}\sin\langle l+r,x\rangle_a.	
	\label{lcomb}
\end{align}
Since $l$ and~$r$ are non-parallel vectors such that $|l^{\bot_a}|\neq |r^{\bot_a}|$, the numbers $C_a(l,r)$ and $C_a(l,-r)$ are non-zero. This and relation~\eqref{lcomb} readily imply that $s_{l+r}^a, s_{l-r}^a\in\HH_{k+1}(\II)$. A similar argument using~\eqref{pic}, \eqref{cksl}, and~\eqref{proj} shows that $c_{l+r}^a, c_{l-r}^a\in\HH_{k+1}(\II)$. 

\smallskip
{\it Step 3\/}. 
We now take any symmetric set $\II\subset\Z_*^2$ and prove the necessity of the hypotheses of Proposition~\ref{P:1}. If~$\II$ is not a generator, then there is a vector $l\in \Z^2\setminus\Z_\II^2$. It follows from~\eqref{pic}--\eqref{sksl} that the function~$c_l^a$ is  orthogonal to $\HH_\ty(\II)$ in~$H$, so~$\HH(\II)$ is not saturating. Furthermore, if any pair of elements $(m,n)\in\II$ either are parallel or satisfy the relation $|m^{\bot_a}|\neq |n^{\bot_a}|$, then \eqref{ckcl}--\eqref{proj} imply that $\HH_k(\II)=\HH(\II)$ for all $k\ge0$, so that $\HH(\II)$ is not saturating. 

To prove the sufficiency, let us assume that~$\II$ is a generator containing at least two non-parallel elements~$m$ and~$n$ such that $|m^{\bot_a}|\neq |n^{\bot_a}|$.  We define a sequence of symmetric sets $\II_k\subset \Z_*^2$ by the following rule: $\II_0=\II$, and for $k\ge1$, 
$$
\II_k=\II_{k-1}\cup\{l+r: l\in \II_{k-1}, r\in\II, |l^{\bot_a}|\neq |r^{\bot_a}|, \mbox{$l$ and $r$ are not parallel}\}.
$$
Let~$\II_\infty$ be the union of the sets~$\II_k$ with $k\ge0$ and of the point~$(0,0)$. The proof of Proposition~1 in~\cite{AS-2006} (see also Proposition~4.4 in \cite{HM-2006}) implies that~$\II_\infty$ coincides with~$\Z^2_\II=\Z^2$. It follows that the union of~$\HH(\II_k)$ with $k\ge0$ contains all divergence-free trigonometric polynomials, whence we see that~$\HH(\II)$ is saturating. 
    \end{proof}

\subsection*{Ginzburg--Landau equation}
Let us fix a vector $a=(a_1,a_2,a_3)$ with positive components  and define the functions 
$$
c^a_l(x) =  \cos\lag l,x\rag_a, \quad 
s^a_l(x) =  \sin\lag l,x\rag_a\quad\mbox{for $l \in  \Z^3_*$},
$$ 
where $\lag l,x\rag_a = \sum_ia_i^{-1}l_ix_i$. We now set $e_0^a\equiv1$ and 
$$
e_l^a(x)= 
\begin{cases} c_l^a(x)  & \text{if }l_1 > 0\text{ or } l_1=0,\, l_2 > 0 \text{ or } l_1=l_2=0,\, l_3 > 0, \\ s_l^a(x) & \text{if }l_1 < 0\text{ or } l_1=0,\, l_2 < 0\text{ or } l_1=l_2=0,\, l_3 < 0. 
\end{cases}
$$
Given a finite symmetric set~$\II\subset\Z^3$ containing the origin, introduce the space (cf.~\eqref{0.03})
$$
\HH(\II)=\lspan\{e_l^a, ie_l^a:l\in\II\}.
$$
We denote by~$\HH_k(\II)$ the sets~$\HH_k$ defined in Section~\ref{s4.2} for $\HH=\HH(\II)$.

\begin{proposition}\label{P:2}
The subspace $\HH(\II)$ is saturating  if and only if~$\II$ is a generator. In particular, the set $\II=\{(0,0,0), (\pm1,0,0),(0,\pm1,0),(0,0,\pm1)\}$ gives rise to the $14$-dimensional saturating subspace~$\HH(\II)$. 
\end{proposition}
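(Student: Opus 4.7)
\textbf{Proof plan for Proposition~\ref{P:2}.}
My plan is to reduce the problem to an algebraic/combinatorial statement about iterated Minkowski sums in $\Z^3$, by showing inductively that each $\HH_k$ has the form $\HH(\II_k)$ for a sequence of finite symmetric sets $\II_k\subset\Z^3$ with $\II_0=\II$ and $\II_{k+1}=\II_k+\II$. The key ingredients are the elementary product-to-sum identities
\[
 c_l^a c_r^a = \tfrac12\bigl(c_{l+r}^a+c_{l-r}^a\bigr),\quad
 s_l^a s_r^a = \tfrac12\bigl(c_{l-r}^a-c_{l+r}^a\bigr),\quad
 c_l^a s_r^a = \tfrac12\bigl(s_{l+r}^a+s_{r-l}^a\bigr),
\]
together with $e_0^a\equiv 1$, and the observation that multiplication by $i$ commutes with everything so that the complex factor $i$ is transported along by the product operation.

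First I would check that, because $\II$ is symmetric and contains $0$, both $c_l^a$ and $s_l^a$ belong to $\HH(\II)$ for every $l\in\II$: one of them equals $e_l^a$ and, up to a sign, the other equals $e_{-l}^a$, which is in $\HH(\II)$ by symmetry. The same applies to $ie_l^a$. Moreover, since $1\in\HH(\II)$, we have $\HH_k\subset\HH_{k+1}$ automatically. With this at hand, I would prove by induction that $\HH_k=\HH(\II_k)$, where $\II_k$ is built by $\II_{k+1}:=\II_k+\II$. The inclusion $\HH(\II_{k+1})\subset\HH_{k+1}$ requires taking linear combinations of products such as $c_l^a c_r^a$ and $s_l^a s_r^a$ (and their $i$-multiples) to separately extract $c^a_{l\pm r}$, $s^a_{l\pm r}$; the reverse inclusion is immediate from the identities above. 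This step is the only genuinely computational part, but it is routine once the sine/cosine bookkeeping is set up.

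Once the identification $\HH_k=\HH(\II_k)$ is in place, note that $\cup_{k\ge0}\II_k$ is the semigroup generated by $\II$ inside $\Z^3$; since $\II$ is symmetric and contains $0$, this semigroup equals the subgroup $\Z^3_\II$. For the necessity of the generator condition: if $\Z^3_\II\neq\Z^3$, pick $l\in\Z^3\setminus\Z^3_\II$; then $e_l^a$ (and $ie_l^a$) are orthogonal in $L^2(\T^3_a,\C)$ to every $\HH(\II_k)$, so $\cup_k\HH_k$ cannot be dense in $L^2$, hence $\HH(\II)$ is not saturating. For the sufficiency, if $\Z^3_\II=\Z^3$ then $\cup_k\II_k=\Z^3$, so $\cup_k\HH(\II_k)$ contains every complex-valued trigonometric polynomial on $\T^3_a$, which is dense in $L^2(\T^3_a,\C)$.

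Finally, for the explicit example $\II=\{(0,0,0),(\pm 1,0,0),(0,\pm 1,0),(0,0,\pm 1)\}$, the integer span $\Z^3_\II$ obviously equals $\Z^3$, so $\HH(\II)$ is saturating; and since the seven functions $e_l^a$ with $l\in\II$ are linearly independent over $\C$ (they correspond to distinct Fourier modes), the real dimension of $\HH(\II)=\lspan_\R\{e_l^a,\,ie_l^a:l\in\II\}$ is $2\cdot 7=14$. The main obstacle in this program is the bookkeeping in the inductive step $\HH_{k+1}\supseteq\HH(\II_k+\II)$: one must be careful that the symmetry of $\II$ (and the presence of both $c^a$ and $s^a$ modes in $\HH$) really does allow one to disentangle the ``$l+r$'' and ``$l-r$'' contributions produced by each trigonometric product.
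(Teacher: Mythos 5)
Your proposal is correct and follows essentially the same route as the paper: the same product-to-sum identities for $c_l^a c_r^a$, $s_l^a s_r^a$ (and the mixed products) are used to show that $c^a_{l\pm r}$, $s^a_{l\pm r}$ pass from $\HH_k$ to $\HH_{k+1}$, and the generator condition then yields density of all trigonometric polynomials, with the converse given by the obvious orthogonality of $e_l^a$, $l\notin\Z^3_\II$, to every $\HH_k$. Your extra bookkeeping via the sets $\II_k$ with $\II_{k+1}=\II_k+\II$ is just a more explicit formulation of the paper's inductive step, and the dimension count $2\cdot 7=14$ matches.
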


\begin{proof}  
The necessity of the condition is straightforward (cf.\ the case of the Navier--Stokes system), so that we confine ourselves to the proof of sufficiency. It is enough to show that if $l,r\in \Z_*^3$ are two vectors such that  $c_r^a, s_r^a\in\HH(\II)$ and~$c_l^a, s_l^a\in  \HH_k(\II)$ for some $k\ge0$, then the functions $c_{l+r}^a, c_{l-r}^a, s_{l+r}^a, s_{l-r}^a$ belong to~$\HH_{k+1}(\II)$. 

To see this, let us note that 
$$
c_l^a(x)c_r^a(x)=\frac12\bigl(c_{l-r}^a(x)+c_{l+r}^a(x)\bigr), \quad 
s_l^a(x)s_r^a(x)=\frac12\bigl(c_{l-r}^a(x)-c_{l+r}^a(x)\bigr).
$$
It follows that $c_{l+r}^a, c_{l-r}^a\in \HH_{k+1}(\II)$. A similar argument applies to the functions~$s_{l+r}^a$ and $s_{l-r}^a$. This completes the proof of the proposition.
\end{proof}

\section{Appendix}
\label{s5} 

\subsection{Measurable version of gluing lemma}
\label{s5.4}

Let $X$ be a Polish space and let $(W,\WW)$ be a measurable space. Recall that a family $\{\mu_w,w\in W\}\subset\PP(X)$ is called a {\it random probability measure\/} (RPM) on~$X$ with the underlying space~$(W,\WW)$ if for any $\Gamma\in\BB(X)$, the mapping $w\mapsto\mu_w(\Gamma)$ is measurable from~$W$ to~$\R$. A proof of the following lemma can be found in~\cite[Corollary~3.6]{BM-2019} (see also Lemma~7.6 in~\cite{villani2003} and Corollary~4 in~\cite{BM-doklady2019}). 

\begin{theorem} \label{t-gluing}
	Let $X$, $Y$, and~$Z$ be Polish spaces and let~$\{\MMMM_w\}$ and $\{\NNNN_w\}$ be RPM on~$X\times Y$ and~$Y\times Z$, respectively, with the same underlying space~$(W,\WW)$ such that the marginals of~$\MMMM_w$ and~$\NNNN_w$ on~$Y$ are the same for any $w\in W$. Then there is a RPM $\{\LLLL_w\}$ on~$X\times Y\times Z$ such that, for any $w\in W$, the marginals of~$\LLLL_w$ on~$X\times Y$ and~$Y\times Z$ coincide with~$\MMMM_w$ and~$\NNNN_w$, respectively. 
\end{theorem}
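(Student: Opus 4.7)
\medskip

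\noindent\textbf{Proof proposal.} The natural strategy is to construct $\LLLL_w$ by a disintegration-and-product formula: writing $\mu_w$ for the common $Y$-marginal of $\MMMM_w$ and $\NNNN_w$, disintegrate
$$
\MMMM_w(\dd x,\dd y)=\MMMM_w^y(\dd x)\,\mu_w(\dd y),\qquad \NNNN_w(\dd y,\dd z)=\NNNN_w^y(\dd z)\,\mu_w(\dd y),
$$
and then set
$$
\LLLL_w(\dd x,\dd y,\dd z)=\MMMM_w^y(\dd x)\,\NNNN_w^y(\dd z)\,\mu_w(\dd y).
$$
For each fixed $w$ this is the classical gluing construction (which gives the required marginal properties), so the entire task is to make the $w$-dependence Borel measurable.

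First I would check that the common $Y$-marginal $\mu_w$ is a RPM on $Y$ with underlying space $(W,\WW)$: for $B\in\BB(Y)$,
$$
\mu_w(B)=\MMMM_w(X\times B)=\NNNN_w(B\times Z),
$$
so measurability in $w$ follows from the hypothesis that $\{\MMMM_w\}$ and $\{\NNNN_w\}$ are RPMs. Next, and this is the main technical step, I would establish a \emph{parameter-dependent measurable disintegration}: given a RPM $\{\KK_w\}$ on a product $Y\times U$ of Polish spaces, with $Y$-marginal $\{\mu_w\}$, there exists a map $(w,y)\mapsto \KK_w^y\in\PP(U)$ which is jointly $\WW\otimes\BB(Y)$-measurable and satisfies $\KK_w(\dd y,\dd u)=\KK_w^y(\dd u)\,\mu_w(\dd y)$ for every $w\in W$. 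One way to produce this is to fix a countable base $\{B_n\}$ of $\BB(U)$ stable under finite unions, apply a measurable version of the Radon--Nikod\'ym theorem to get jointly measurable densities $f_n(w,y)$ representing the signed kernels $B\mapsto \KK_w(\dd y,B_n)$ with respect to $\mu_w(\dd y)$, and then follow the standard Rokhlin-type extension to upgrade the consistent system $\{f_n\}$ to a probability kernel $\KK_w^y$ on all of $\BB(U)$, using Polishness of~$U$ to deal with the null sets. Apply this to $\KK_w=\MMMM_w$ (with $U=X$) and to $\KK_w=\NNNN_w$ (with $U=Z$) to obtain the desired measurable disintegrations $\MMMM_w^y$ and $\NNNN_w^y$.

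With both disintegrations at hand, define
$$
\LLLL_w(C)=\int_Y\biggl(\int_X\int_Z\I_C(x,y,z)\,\NNNN_w^y(\dd z)\,\MMMM_w^y(\dd x)\biggr)\mu_w(\dd y),\qquad C\in\BB(X\times Y\times Z).
$$
For every fixed $w$ this is a Borel probability measure on $X\times Y\times Z$ (apply Fubini to the product kernel $\MMMM_w^y\otimes\NNNN_w^y$). Joint measurability of $(w,y)\mapsto \MMMM_w^y$ and $(w,y)\mapsto \NNNN_w^y$, combined with a monotone class argument on $C$, gives that $w\mapsto\LLLL_w(C)$ is $\WW$-measurable, so $\{\LLLL_w\}$ is an RPM. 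Finally, the marginals come out correctly: if $C=A\times Z$ with $A\in\BB(X\times Y)$, the inner $Z$-integral equals $\I_A(x,y)$ because $\NNNN_w^y$ is a probability measure, and the remaining double integral reproduces $\MMMM_w(A)$ by the disintegration identity; the analogous calculation for $C=X\times B$ with $B\in\BB(Y\times Z)$ yields $\NNNN_w(B)$.

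The step I expect to be genuinely delicate is the joint measurability of the disintegration $(w,y)\mapsto \MMMM_w^y$. Pointwise disintegration is standard, but producing a single version that is $\WW\otimes\BB(Y)$-measurable requires carefully handling $\mu_w$-null sets uniformly in $w$. This is exactly the content referenced to Bogachev--Malofeev in~\cite{BM-doklady2019,BM-2019}, and one could alternatively import their result as a black box; the rest of the argument is then routine Fubini-type bookkeeping.
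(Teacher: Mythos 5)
The paper does not actually prove Theorem~\ref{t-gluing}: it is imported verbatim from Bogachev--Malofeev (\cite[Corollary~3.6]{BM-2019}, see also~\cite{BM-doklady2019}), a result established at the authors' request precisely because the measurable-in-the-parameter version of the gluing lemma is not off-the-shelf. So there is no in-paper argument to compare against; what you propose is essentially a reconstruction of the cited proof, and your overall route --- disintegrate $\MMMM_w$ and $\NNNN_w$ over the common $Y$-marginal $\mu_w$, glue by the product-of-kernels formula, and check marginals by Fubini --- is the standard and correct one (it is the parametric version of Villani's Lemma~7.6). You also correctly locate where all the substance lies: producing a \emph{single} disintegration $(w,y)\mapsto\MMMM_w^y$ that is jointly $\WW\otimes\BB(Y)$-measurable and satisfies the disintegration identity for \emph{every} $w$ (there is no measure on $W$, so one cannot discard bad parameters). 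Your sketch of that step (countable generating algebra, measurable Radon--Nikod\'ym densities $f_n(w,y)$, Rokhlin-type extension to a kernel) is the right idea, but note that this is not mere bookkeeping: the sets where the finitely additive system $\{f_n(w,\cdot)\}$ fails to be consistent or fails to extend are $\mu_w$-null sets depending on $w$, and one must check that the set of bad pairs $(w,y)$ is product-measurable and then patch it with a fixed probability measure on the fibre space, using Polishness (a countable generating algebra plus inner regularity/compact classes) to carry out the extension. Since you either carry this out along those lines or cite Bogachev--Malofeev as a black box, your proposal is sound; just be aware that citing their result makes the whole argument circular as a ``proof'' of the theorem, since their corollary essentially \emph{is} the theorem, so the honest options are to quote it (as the paper does) or to write out the measurable disintegration in full.
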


This result can be reformulated as follows: under the hypotheses of the theorem, there is a probability space $(\Omega,\FF,\IP)$ and measurable functions $\xi_w(\omega)$, $\eta_w(\omega)$, and~$\zeta_w(\omega)$ defined on $W \times\,\Omega$ and taking values in~$X$, $Y$, and~$Z$, respectively, such that 
$$
\DD(\xi_w,\eta_w)=\MMMM_w, \quad \DD(\eta_w,\zeta_w)=\NNNN_w
\quad\mbox{for any $w\in W$}. 
$$

\subsection{Density of sets of solutions for a linear parabolic PDE}
\label{linearGL}
Retaining the notation of Section~\ref{s4.2}, let us consider the following linear parabolic PDE on $J=[0,1]$: 
\begin{equation} \label{lppde}
	\dot v-\mu \Delta v+b(t,x)v+c(t,x)\bar v=0, \quad x\in \T_a^3, 
\end{equation}
where $b, c\in H^1(J\times \T_a^3,\C)$ are bounded functions, and $\mu\in\C$ is a parameter with  a positive real part. We denote by~$\UU$ the space of all $v\in L^2(J,H^2)\cap W^{1,2}(J,L^2)$ satisfying~\eqref{lppde} and, for $s\in J$, set $\UU_s=\{u(s,\cdot): u\in\UU\}\subset H^1$. The following result is a variation of the well-known property of approximate controllability of~\eqref{lppde} by a starting control. 

\begin{proposition} \label{p-lcgl}
	For any $s\in J$, the subspace~$\UU_s$ is dense in~$H^1$. 
\end{proposition}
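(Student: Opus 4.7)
The plan is to reduce the density claim to backward uniqueness for the adjoint parabolic equation via a standard duality argument. The case $s=0$ is trivial: for any $u_0\in H^1$, classical linear parabolic theory produces a solution $v\in L^2(J,H^2)\cap W^{1,2}(J,L^2)$ of \eqref{lppde} with $v(0)=u_0$, so $\UU_0=H^1$. Fix therefore $s\in(0,1]$ and let $T:=R(s,0):H^1\to H^1$ denote the resolving operator of \eqref{lppde}, so that $\UU_s = T(H^1)$. By the Hahn--Banach theorem, density of $T(H^1)$ in $H^1$ is equivalent to the injectivity of the dual operator $T^*:H^{-1}\to H^{-1}$, where duality is taken with respect to the pairing extending the real $L^2$ inner product $(u,v)=\Re\int u\bar v\,\dd x$ used throughout Section~\ref{s4.2}.

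To identify $T^*$, write the equation as $\dot v+Av=0$ with $Av:=-\mu\Delta v+b(t,x)v+c(t,x)\bar v$. A direct computation in the real inner product (using that multiplication by $c\bar v$ is self-adjoint because it is represented by a symmetric real $2\times 2$ matrix in the $(\Re v,\Im v)$ basis) gives the formal adjoint
\[
A^*w = -\bar\mu\Delta w+\bar b\,w+c\,\bar w.
\]
Consider, for $w\in H^{-1}$, the solution $\tilde w$ of the final-value problem $-\dot{\tilde w}+A^*\tilde w=0$ on $[0,s]$ with $\tilde w(s)=w$. In the time-reversed variable $\tau=s-t$ this reads $\partial_\tau\tilde w+A^*\tilde w=0$ with initial datum $w$ at $\tau=0$; since $-A^*$ has principal part $\bar\mu\Delta$ with $\Re\bar\mu=\Re\mu>0$, this Cauchy problem is forward well-posed, and parabolic regularisation places $\tilde w(\tau)$ in $H^1$ (and smoother) for $\tau>0$. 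A direct integration by parts yields $\frac{\dd}{\dd t}\langle v(t),\tilde w(t)\rangle=0$ for $v\in\UU$, hence
\[
\langle T u_0,w\rangle_{H^1,H^{-1}} = \langle u_0,\tilde w(0)\rangle_{H^1,H^{-1}}\quad\text{for all } u_0\in H^1,
\]
so that $T^*w=\tilde w(0)$, that is, $T^*w$ is the value at $\tau=s$ of the $\tau$-forward adjoint flow started from $w$.

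The injectivity of $T^*$ is therefore exactly the statement of \emph{backward uniqueness} for the linear parabolic equation $\partial_\tau\tilde w+A^*\tilde w=0$: a solution that vanishes at a positive time must vanish identically. Splitting $\tilde w=\tilde w_1+i\tilde w_2$ into real and imaginary parts converts this into a real $2\times 2$ parabolic system whose leading symbol is $\nu I+\text{(skew)}$, in particular whose symmetric part is $\nu I>0$; the coefficients $\bar b, c$, being in $H^1(J\times\T_a^3)$, lie in suitable Lebesgue classes after Sobolev embedding. The classical backward-uniqueness theorems of Lions--Malgrange and Agmon--Nirenberg for strongly parabolic systems therefore apply and yield $\tilde w\equiv 0$, hence $w=0$. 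This proves that $T^*$ is injective, and hence that $\UU_s=T(H^1)$ is dense in $H^1$.

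The main technical point is the applicability of the backward uniqueness theorem given the modest regularity of $b$ and $c$; this is where the argument would need to be made careful, since $H^1$ in four space-time variables does not embed into $L^\infty$. However, in the concrete situation in which this proposition is used (the linearisation around a trajectory $\tilde u\in L^2(J,H^3)\cap W^{1,2}(J,H^1)$ of the Ginzburg--Landau equation), the coefficients $b$ and $c$ are polynomials in $\tilde u$ and its complex conjugate, which by Sobolev embedding on $\T_a^3$ are essentially bounded in $(t,x)$, so the classical backward uniqueness theorem applies without difficulty.
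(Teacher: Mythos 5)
Your proposal is correct and takes essentially the same route as the paper: reduce density of $\UU_s$ to injectivity of the adjoint of the resolving operator and conclude by backward uniqueness for the adjoint parabolic problem, the only (cosmetic) difference being that the paper takes adjoints with respect to the $H^1$ inner product, so its dual equation~\eqref{dual-lppde} is posed in $H^1$ with the $L^{-1}(\bar b Lw)$-type lower-order terms, whereas you use the $H^1$--$H^{-1}$ duality extending the real $L^2$ pairing and invoke parabolic smoothing for the $H^{-1}$ data. Note also that the boundedness of $b$ and $c$ you worry about at the end is already assumed in the statement of the proposition, so no extra argument is needed there.
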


\begin{proof}
	The claim is trivial for $s=0$, so without loss of generality, let us assume that $s=1$. We only outline the proof, which is based on a classical argument.

\smallskip 
For $\tau\in[0,1)$, let us denote by $\{R(t,\tau), \tau\le t\le 1\}$ the resolving process for Eq.~\eqref{lppde} with an initial condition specified at $t=\tau$. This means that, for any $v_0\in H^1$, the function $v(t)=R(t,\tau)v_0$ is the unique solution of~\eqref{lppde} in the space $L^2([\tau,1],H^2)\cap W^{1,2}([\tau,1],L^2)$ such that $v(\tau)=v_0$. We  denote by~$R(1,t)^*$ the adjoint of~$R(1,t)$ with respect to the inner product in~$H^1$. Setting $L= I-\Delta$, it is straightforward to check that, for any $w_1\in H^1$, the function $w:t\mapsto R(1,t)^*w_1$ is the unique solution in~$L^2(J,H^2)\cap W^{1,2}(J,L^2)$  for the dual problem
\begin{equation} \label{dual-lppde}
	\dot w+\bar\mu \Delta w-L^{-1}(\bar b Lw)-L^{-1}(cL\bar w)=0, \quad w(1)=w_1,
\end{equation}
and the inner product $(v(t),w(t))_1$ does not depend on~$t$.

Suppose now that~$\UU_1$ is not dense in~$H^1$. Then there is a non-zero $w_1\in H^{1}$ such that 
\begin{equation} \label{orthogon}
	0=\bigl(R(1,0)v_0,w_1\bigr)_1=\bigl(v_0,R(1,0)^*w_1\bigr)_1
	\quad\mbox{for any $v_0\in H^1$},
\end{equation}
whence we conclude that $R(1,0)^*w_1=0$. Thus, the solution $w(t)$ of problem~\eqref{dual-lppde} vanishes at $t=0$.  By the backward uniqueness for~\eqref{dual-lppde} (e.g., see Section~II.8 in~\cite{BV1992}), we conclude that $w_1=0$. This completes the proof of the proposition. 
\end{proof}

\subsection{Resolving operator for the Navier--Stokes system}
\label{s-resolvingNS}
Let us consider the Navier--Stokes equations~\eqref{1.7} on a 2D torus~$\T_a^2$, where $a=(a_1,a_2)$ is an arbitrary vector with positive coordinates. Given any $T>0$, we define the Hilbert spaces 
$$
\XX_T=L^2(J_T,H_\sigma^1)\cap W^{1,2}(J_T,H_\sigma^{-1}),\quad \YY_T=L^2(J_T,H_\sigma^{-1}),
$$
where $J_T=[0,T]$, and $H_\sigma^s$ stands for the space of divergence-free vector functions on~$\T_a^2$ whose components belong to the Sobolev space of order $s\in\Z$. It is a well-known fact that, for any $u_0\in L_\sigma^2$ (where $L_\sigma^2=H_\sigma^0$) and $\eta\in\YY_T$, Eq.~\eqref{1.7} has a unique solution $u\in\XX_T$ satisfying the initial condition
\begin{equation} \label{NS-IC}
	u(0,x)=u_0(x). 
\end{equation}
We shall denote the solution of~\eqref{1.7}, \eqref{NS-IC} by $\RR(u_0,\eta)$, so that~$\RR$ is a map from $L_\sigma^2\times \YY_T$ to~$\XX_T$. The following result, which is essentially established in~\cite{kuksin-1982} (see also~\cite[Chapter~1]{VF1988}), implies, in particular, that for any fixed $u_0\in L_\sigma^2$, the map taking~$\eta$ to~$u(1)$ is analytic from $\YY_T$ to~$L_\sigma^2$. 

\begin{theorem} \label{t-NSanalytic}
	The map $\RR:L_\sigma^2\times \YY_T\to\XX_T$ is analytic. 
\end{theorem}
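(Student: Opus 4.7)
The plan is to obtain analyticity of $\RR$ by an application of the analytic implicit function theorem in Banach spaces. Introduce the map
\begin{equation*}
F:\XX_T\times L_\sigma^2\times\YY_T\to\YY_T\times L_\sigma^2,\qquad
F(u,u_0,\eta)=\bigl(\p_t u+\nu Lu+B(u)-\eta,\;u(0)-u_0\bigr),
\end{equation*}
where $B(u)=\Pi(\langle u,\nabla\rangle u)$ and $L=-\Pi\Delta$. By construction, a function $u\in\XX_T$ coincides with $\RR(u_0,\eta)$ if and only if $F(u,u_0,\eta)=0$. The first step is to check that $F$ is well defined and \emph{analytic}. The linear term $u\mapsto\p_t u+\nu Lu$ is bounded from $\XX_T$ to~$\YY_T$, and the trace $u\mapsto u(0)$ is bounded from $\XX_T$ to~$L^2_\sigma$ by the continuous embedding $\XX_T\hookrightarrow C(J_T,L^2_\sigma)$. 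The pieces linear in $u_0$ and $\eta$ are trivially analytic. The only non-linear contribution is $B$, which is a homogeneous quadratic polynomial; it is enough to verify that the associated bilinear map $(u,v)\mapsto\Pi(\langle u,\nabla\rangle v)$ is bounded from $\XX_T\times\XX_T$ to $\YY_T$. In dimension two, $\XX_T$ embeds into $L^4(J_T,L^4)$ by Ladyzhenskaya's inequality, so $u\otimes v\in L^2(J_T,L^2)$ and hence $\diver(u\otimes v)\in L^2(J_T,H^{-1})=\YY_T$; the estimate is bilinear-continuous. Consequently $F$ is a sum of a continuous affine map and a continuous homogeneous quadratic map, and is therefore analytic on the whole of $\XX_T\times L_\sigma^2\times \YY_T$.

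The second step is to verify that the partial derivative
\begin{equation*}
D_uF(u,u_0,\eta):\XX_T\to\YY_T\times L_\sigma^2,\qquad
v\mapsto\bigl(\p_t v+\nu Lv+B'(u)v,\;v(0)\bigr),
\end{equation*}
is an isomorphism at every point $(u,u_0,\eta)$ with $F(u,u_0,\eta)=0$, where $B'(u)v=\Pi(\langle u,\nabla\rangle v+\langle v,\nabla\rangle u)$. Equivalently, one has to prove that for every $(f,v_0)\in\YY_T\times L^2_\sigma$ the linear Cauchy problem
\begin{equation*}
\p_t v+\nu Lv+B'(u)v=f,\qquad v(0)=v_0,
\end{equation*}
admits a unique solution $v\in\XX_T$ depending continuously on the data. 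This is the linearised 2D Navier--Stokes equation around $u\in\XX_T$; its well-posedness in the class~$\XX_T$ is a classical fact, obtained by a Galerkin approximation combined with the energy estimate for $v$ in $L^\infty(J_T,L^2)\cap L^2(J_T,H^1)$ together with the bound $\|B'(u)v\|_{H^{-1}}\le C\|u\|_{H^1}\|v\|_{H^1}$ (valid in 2D), from which the $W^{1,2}(J_T,H^{-1})$ regularity of $v$ follows by a direct inspection of the equation. Uniqueness is standard. By the open mapping theorem, $D_uF$ is thus a toplinear isomorphism at every solution point.

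With these two ingredients in hand, the analytic implicit function theorem on Banach spaces (see e.g.\ Deimling's monograph) applied at an arbitrary point $(u_0^\ast,\eta^\ast)\in L_\sigma^2\times\YY_T$ and the corresponding $u^\ast=\RR(u_0^\ast,\eta^\ast)$ provides an analytic local solution $(u_0,\eta)\mapsto \tilde\RR(u_0,\eta)$ of $F=0$ near $(u_0^\ast,\eta^\ast)$. By the uniqueness of solutions to~\eqref{1.7}, \eqref{NS-IC}, this local solution must coincide with~$\RR$ in a neighbourhood of $(u_0^\ast,\eta^\ast)$; since the reference point is arbitrary, $\RR$ is analytic on the whole of~$L^2_\sigma\times\YY_T$. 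In this scheme the only delicate point is the verification that $D_uF$ is an isomorphism, i.e.\ the well-posedness in~$\XX_T$ of the linearised equation around a non-smooth reference solution $u\in\XX_T$; everything else is formal.
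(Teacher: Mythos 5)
Your proposal is correct and follows essentially the same route as the paper: both arguments rest on the analytic implicit function theorem, with the polynomial (quadratic) nature of $B$ giving analyticity of the underlying map and the invertibility of the linearisation reduced to the unique solvability in $\XX_T$ of the linearised Navier--Stokes problem around a non-smooth reference trajectory. The only difference is bookkeeping: the paper first substitutes $u=z+A\zeta$ (Stokes solution plus Duhamel term) and applies the theorem to $\zeta\mapsto\zeta+B(z+A\zeta)$ in $\YY_T$, so that the linearised problem appears with zero initial data, whereas you apply it directly to $(u,u_0,\eta)\mapsto\bigl(\p_tu+\nu Lu+B(u)-\eta,\,u(0)-u_0\bigr)$ and therefore need the linearised problem with general data $(f,v_0)\in\YY_T\times L^2_\sigma$ --- an equally standard fact (note only that for the $W^{1,2}(J_T,H^{-1}_\sigma)$ regularity of $v$ one should use the bilinear estimate via $L^4(J_T,L^4)$ from your first step rather than the cruder bound $\|B'(u)v\|_{H^{-1}}\le C\|u\|_{H^1}\|v\|_{H^1}$, which only gives $L^1$ integrability in time).
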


\begin{proof}
We shall apply an analytic implicit function theorem (see Section~3.3B in~\cite{berger1977}). Let us denote by $z\in\XX_T$ the solution of the (linear) Stokes problem 
\begin{equation} \label{stokes}
\p_t z+\nu Lz=\eta, \quad z(0)=u_0.	
\end{equation}
A solution of~\eqref{1.7}, \eqref{NS-IC} is sought in the form $u=z+A\zeta$, where $\zeta\in \YY_T$ is an unknown function and $A$ stands for the resolving operator of problem~\eqref{stokes} with $u_0\equiv0$. Substituting this expression into~\eqref{1.7}, we derive the following functional equation for~$\zeta$ in the space~$\YY_T$: 
\begin{equation} \label{functional-equation}
	F(\zeta,z):=\zeta+B(z+A\zeta)=0. 
\end{equation}
This is a functional equation in~$\YY_T$ defined by the analytic function~$F$. Due to the unique solvability of the initial value problem for the Navier--Stokes system, we know that~\eqref{functional-equation} has a unique solution $\zeta\in\YY_T$, and we only need to prove that~$\zeta$ is an analytic function of~$z\in\XX_T$.  By the analytic implicit function theorem, this will be established if we prove that the linear operator $(D_\zeta F)(z,\zeta):\YY_T\to\YY_T$ is invertible for any $(z,\zeta)\in\XX_T\times\YY_T$. 

To this end, we first note that 
$$
(D_\zeta F)(z,\zeta)\xi=\xi+B(z+A\zeta,A\xi)+B(A\xi,z+A\zeta), \quad \xi\in\YY_T,
$$
where $B(\cdot,\cdot)$ is the bilinear term in the Navier--Stokes system. By Banach's inverse mapping theorem, the required invertibility of~$D_\zeta F$ will be established if we prove that, for any $f\in\YY_T$, the equation 
\begin{equation} \label{resolving-inverse}
	\xi+B(z+A\zeta,A\xi)+B(A\xi,z+A\zeta)=f
\end{equation}
has a unique solution $\xi\in\YY_T$. Setting $w=A\xi$, we obtain the following problem for~$w$:
\begin{equation} \label{problem-w}
	\p_tw+\nu Lw+B(z+A\zeta,w)+B(w,z+A\zeta)=f, \quad w(0)=0. 
\end{equation}
This is a linear Navier--Stokes-type problem, and its unique solvability in~$\XX_T$ can be established by standard arguments. It remains to note that $\xi=(\p_t+\nu L)w$ is an element of~$\YY_T$, which satisfies~\eqref{resolving-inverse}. 
\end{proof}

\subsection{Resolving operator for the complex Ginzburg--Landau equation}
\label{s-resolvingCGL}
Let us consider the Ginzburg--Landau equation~\eqref{cgl}, supplemented with the initial condition~\eqref{ic-cgl}. For a given $T>0$, we define the spaces 
$$
\XX_T=L^2(J_T,H^2)\cap W^{1,2}(J_T,L^2),\quad \YY_T=L^2(J_T,H^1),
$$
where $H^s$ is the Sobolev space of order~$s$ on~$\T_a^3$. Recall that, even though all the functions of this subsection are complex-valued, we regard~$H^s$ as a real Hilbert space, so that the nonlinear term in~\eqref{cgl} defines a real-analytic map in~$H^2$. The following theorem establishes the well-posedness of the Cauchy problem for the Ginzburg--Landau equation and the analyticity of its resolving operator. 

\begin{theorem} \label{t-IV-CGL}
	For $u_0\in H^1$ and~$\eta\in \YY_T$, problem~\eqref{cgl}, \eqref{ic-cgl} has a unique solution $u\in\XX_T$, and the map $\RR(u_0,\eta)$ acting from $H^1\times\YY_T$ to~$\XX_T$ is analytic. Moreover, for any $\rho>0$ there is $R>0$ such that, if $\eta:\R_+\to H^2$ is a locally square-integrable function, and 
	\begin{equation} \label{CGL-boundsu0eta}
	\|\eta\|_{L^2([k-1,k],H^2)}\le \rho
	\quad \mbox{for all $k\ge1$}, 
	\end{equation}
then the corresponding solution $u(t)$ belongs to~$H^2$ for $t>0$ and satisfies the inequality
\begin{equation} \label{CGL-bound-solution}
	\|u(t)\|_{H^2}\le R\quad \mbox{for $t\ge T_0$}, 
\end{equation}
where $T_0>0$ is a number depending only on the $H^1$-norm of~$u_0$. 
\end{theorem}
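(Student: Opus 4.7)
The plan is to mimic Theorem~\ref{t-NSanalytic} and proceed in three steps: (i) global well-posedness in~$\XX_T$ via energy estimates, (ii) analyticity of~$\RR$ via the analytic implicit function theorem, (iii) the absorbing-ball property~\eqref{CGL-bound-solution} by bootstrapping. A rigorous execution relies, as usual, on a Galerkin approximation combined with a priori estimates, followed by passage to the limit.

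For step~(i), the basic $L^2$-energy identity, obtained by testing~\eqref{cgl} with~$\bar u$ and taking real parts, kills the nonlinearity since $\Re\langle ic|u|^4u,u\rangle=0$, and yields
$$
\|u(t)\|^2+2\nu\int_0^t\|\nabla u\|^2\dd s\le e^{-\gamma t}\|u_0\|^2+C\int_0^t\|\eta\|^2\dd s,
$$
so that $u\in L^\infty(J_T,L^2)\cap L^2(J_T,H^1)$. Testing instead with $-\Delta\bar u$ yields a differential inequality for~$\|\nabla u\|^2$ whose nonlinear contribution $c\,\Im\langle|u|^4u,\Delta\bar u\rangle$, after integration by parts, is bounded in absolute value by $C\|u\|_{L^6}^4\|\nabla u\|_{L^6}^2\le C\|u\|_{H^1}^4\|\Delta u\|^2$, using H\"older and the 3D embeddings $H^1\hookrightarrow L^6$ and $H^2\hookrightarrow W^{1,6}$. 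Absorbing this term into $\nu\|\Delta u\|^2$ on sufficiently short intervals on which $\|u\|_{H^1}$ is kept small (thanks to the $L^2(J_T,H^1)$-bound just derived), and patching by Gronwall, one obtains $u\in L^\infty(J_T,H^1)\cap L^2(J_T,H^2)$; the equation itself then forces $\p_tu\in L^2(J_T,L^2)$, so $u\in\XX_T$. Uniqueness and continuous dependence on $(u_0,\eta)$ follow from an analogous energy estimate for the difference of two solutions.

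For step~(ii), I would copy the setup of Section~\ref{s-resolvingNS}. Let $z\in\XX_T$ solve the linear problem $\p_tz-(\nu+i)\Delta z+\gamma z=\eta$ with $z(0)=u_0$, and let $A:\YY_T\to\XX_T$ be the resolving operator of the same linear equation with zero initial data. Seeking $u=z+A\zeta$ with unknown $\zeta\in\YY_T$ reduces the Cauchy problem to the functional equation
$$
F(\zeta,z):=\zeta+ic|z+A\zeta|^4(z+A\zeta)=0\quad\text{in }\YY_T.
$$
The Nemytskii operator $u\mapsto|u|^4u$ is a real polynomial of degree~$5$ which maps $\XX_T$ analytically into $\YY_T$, by Moser-type tame estimates together with the embeddings $\XX_T\hookrightarrow C(J_T,H^1)\cap L^2(J_T,H^2)$; hence $F$ is real-analytic on $\YY_T\times\XX_T$. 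Well-posedness shows that $F(\cdot,z)$ vanishes at a unique~$\zeta$. Invertibility of $D_\zeta F$ amounts to the unique solvability in~$\XX_T$ of the linearised parabolic problem
$$
\p_tw-(\nu+i)\Delta w+\gamma w+Q(z+A\zeta)w=g,\quad w(0)=0,
$$
for every $g\in\YY_T$; this is a standard linear result obtained by Galerkin, using the $H^1$-bound on $z+A\zeta$. The analytic implicit function theorem then yields analyticity of~$\RR$.

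For step~(iii), the $L^2$-energy identity together with~\eqref{CGL-boundsu0eta} gives a uniform bound on $\|u(t)\|$ for $t\ge1$; propagating the $H^1$-estimate of step~(i) yields a uniform $H^1$-bound for $t\ge T_0$; and a further bootstrap---testing~\eqref{cgl} with $\Delta^2\bar u$ (or, equivalently, the time-differentiated equation with $\Delta\bar u_t$) and controlling $\Delta(|u|^4u)$ by multiplicative interpolation inequalities---produces the desired uniform $H^2$-bound. The main obstacle in the whole argument is closing the $H^1$-a priori estimate in three space dimensions: the quintic nonlinearity is energy-critical in 3D, so the estimate $|\Im\langle|u|^4u,\Delta\bar u\rangle|\le C\|u\|_{H^1}^4\|\Delta u\|^2$ absorbs into $\nu\|\Delta u\|^2$ only when $\|u\|_{H^1}^4$ is small compared to~$\nu$; for large initial data one must exploit the $L^2(J_T,H^1)$-control from the first identity carefully in a short-interval/patching argument, and the careful tracking of constants through this procedure is the most delicate part of the proof.
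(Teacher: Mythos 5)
There is a genuine gap, and it is exactly at the point you flag as ``the most delicate part'': your Step~(i) $H^1$ estimate does not close for large data, and the short-interval/patching fix is circular. After testing with $-\Delta\bar u$, the critical contribution is bounded (at best) by $C\|u\|_{H^1}^4\|\Delta u\|^2$; since the quintic nonlinearity is energy-critical in 3D, no interpolation lowers this exponent, so the term can be absorbed into $\nu\|\Delta u\|^2$ only when $\|u\|_{H^1}^4\lesssim\nu$. Crucially the large coefficient multiplies $\|\Delta u\|^2$ and not an $L^1_t$-function times $\|\nabla u\|^2$, so neither Gronwall nor a patching argument applies: the $L^2(J_T,H^1)$ bound from the $L^2$ identity only controls time averages of $\|\nabla u\|^2$ and cannot prevent fast spikes of the $H^1$ norm, while keeping $\|u\|_{H^1}$ small on each short interval is precisely the bound you are trying to prove (the alternative estimate via Agmon's inequality only yields a Riccati-type inequality and hence local existence). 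Since Steps~(ii) and~(iii) and the absorbing bound~\eqref{CGL-bound-solution} all rest on this global $H^1$ estimate, the argument as written does not go through.

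The paper's proof avoids the issue by exploiting the Hamiltonian structure of the nonlinearity: instead of $\|\nabla u\|^2$ it differentiates the energy $\scH(u)=\int\bigl(\tfrac12|\nabla u|^2+\tfrac{c}{6}|u|^6\bigr)\dd x$, whose gradient is $w=-\Delta u+c|u|^4u$. Writing the right-hand side of~\eqref{cgl} as $\nu\Delta u-iw-\gamma u+\eta$, the term $(w,iw)$ vanishes identically, so the dangerous critical interaction cancels exactly, and the leftover quintic term $\nu c\,(|u|^4u,\Delta u)\le-\nu c\,(|u|^4,|\nabla u|^2)$ even has a favourable sign; see~\eqref{derivativeH}--\eqref{derivativeH-final}. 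This yields a dissipative differential inequality for $\scH$ valid for arbitrary data, and the nonlinear Gronwall Lemma~\ref{l-gronwall} then gives both the global bound and the absorbing-ball property with constants depending only on $\rho$ (and $T_0$ only on $\|u_0\|_1$), which your scheme also does not produce. Your Steps~(ii) and~(iv)-type bootstrap for the $H^2$ bound are in line with the paper (implicit function theorem as in the Navier--Stokes case, and a $t\|\Delta u\|^2$-type estimate), but they become valid only once the Hamiltonian energy estimate replaces your Step~(i).
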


\begin{proof}
	We shall confine ourselves to the derivation of some apriori estimates for solutions and to the proof of uniqueness of a solution and its analytic dependence on data. 
	
	\smallskip
	{\it Step~1: Bound for the energy\/}.
	Let us define the Hamiltonian
	$$
	\scH(u)=\int_{\T^3}\bigl(\tfrac12|\nabla u|^2+\tfrac{c}{6}|u|^6\bigr)\dd x
	$$
	and note that it is a Fr\'echet differentiable function on~$H^1$ with the derivative
	$$
	(D\scH)(u;\xi)=(-\Delta u+c|u|^4u, \xi),
	$$
	where the inner product $(\cdot,\cdot)$ is defined by~\eqref{inner-product}. It follows that, for any solution $u\in\XX_T$ of~\eqref{cgl}, we have 
	\begin{align}
		\frac{\dd}{\dd t}\scH(u(t))
		&=(-\Delta u+c|u|^4u,\p_tu)\notag\\
		&=(-\Delta u+c|u|^4u,(\nu+i)\Delta u-\gamma u-ic|u|^4u+\eta)\notag\\
		&=-\nu\|\Delta u\|^2+\nu c\,(|u|^4u,\Delta u)-\gamma\|\nabla u\|^2-\gamma c\|u\|_{L^6}^6\notag\\
		&\qquad+(\nabla u,\nabla \eta)+c(|u|^4u,\eta).\label{derivativeH}
	\end{align}
	Simple calculation combined with the continuity of the embedding $H^1\subset L^6$ implies that
	\begin{align*}
	(|u|^4u,\Delta u)&=-(3|u|^4\nabla u, \nabla u)-(2|u|^2 u^2\nabla \bar u,\nabla u)\le -\bigl(|u|^4,|\nabla u|^2\bigr),\\
	(\nabla u,\nabla \eta)&\le \|\nabla u\|\,\|\nabla \eta\|\le \frac\gamma2\|\nabla u\|^2+\frac{1}{2\gamma}\|\eta\|_1^2,\\
	(|u|^4u,\eta)&\le \|u\|_{L^6}^5\|\eta\|_{L^6}\le C\scH(u)^{5/6}\|\eta\|_1. 
	\end{align*}
Substituting these inequalities in~\eqref{derivativeH} and carrying out some transformations, we derive
\begin{equation} \label{derivativeH-final}
	\frac{\dd}{\dd t}\scH(u)
	\le -\delta \bigl(\scH(u)+\|\Delta u\|^2+(|u|^4,|\nabla u|^2)\bigr)+K\bigl(\scH(u)^{5/6}\|\eta\|_1+\|\eta\|_1^2\bigr), 
\end{equation}
where $\delta$ and~$K$ are some positive numbers. We now need the following version of Gronwall lemma established in the end of this subsection. 

\begin{lemma} \label{l-gronwall}
	Let $\varphi:J_T\to\R$ be an absolutely continuous function minorised by~$1$ such that 
	\begin{equation} \label{gronwall-hypothesis}
		\dot\varphi\le -a\varphi+b+C(t)\varphi^r \quad \mbox{almost everywhere on $J_T$},
	\end{equation}
	where $a$, $b$, and $r<1$ are some positive numbers, and $C:J_T\to\R_+$ is an integrable function. Then there is a number $c_r$ depending only on~$r$ such that 
	\begin{equation}\label{gronwall-conclusion}
		\varphi(t)\le c_r e^{-at}\varphi(0)+c_r\biggl(\frac{b}{a}+(1-r)\int_0^te^{-a(1-r)(t-s)}C(s)\dd s\biggr)^{(1-r)^{-1}}. 
	\end{equation}
\end{lemma}

Let us set $\varphi(t)=\scH(u(t))+1$. It follows from~\eqref{derivativeH-final} that~$\varphi$ satisfies inequality~\eqref{gronwall-hypothesis}, in which $a=b=\delta$, $r=5/6$, and $C(t)=K(\|\eta\|_1+\|\eta\|_1^2)$. Applying Lemma~\ref{l-gronwall}, we derive 
\begin{equation} \label{energy-explicitbound}
	\scH(u(t))\le C_1 e^{-\delta t}\bigl(\scH(u_0)+1\bigr)+C_1\biggl(1+\int_0^te^{-\delta(t-s)/6}\bigl(\|\eta\|_1+\|\eta\|_1^2\bigr)\dd s\biggr)^6, 
\end{equation}
where $t\ge 0$ is arbitrary, and $C_1>0$ is a number not depending on the solution. In particular, if~\eqref{CGL-boundsu0eta} is satisfied, then the energy~$\scH(u(t))$ of any solution is bounded on~$\R_+$, and there is a number~$R_*>0$ such that 
\begin{equation} \label{bound-energy}
	\scH(u(t))\le R_*\quad\mbox{for $t\ge T_*$},
\end{equation}
	where $T_*>0$ depends only on~$\|u_0\|_1$. Moreover, integrating~\eqref{derivativeH-final} in time and using~\eqref{energy-explicitbound}, we can find $C_2>0$ depending only on~$\rho$ such that 
	\begin{equation} \label{boundintegralH2}
	\int_0^t\bigl(\|\Delta u\|^2+(|u|^4,|\nabla u|^2)\bigr)\,\dd s\le C_2(1+t)\scH(u_0)\quad \mbox{for $t\ge0$}.  
	\end{equation}
	
	\smallskip
	{\it Step~2: Uniqueness\/}.
	If $u_1,u_2\in\XX_T$ are two solutions corresponding to the same data $(u_0,\eta)\in H^1\times\YY_T$, then their difference $u=u_1-u_2$ satisfies the equations 
	\begin{equation*} 
		\p_t u-(\nu +i)\Delta u+\gamma u+ic\bigl(|u_1|^4 u_1-|u_2|^4 u_2\bigr)=0, \quad u(0)=0. 
	\end{equation*}
	Taking the inner product in~$L^2$ of the first equation with~$2u$, noting that 
	$$
	\bigl||u_1|^4 u_1-|u_2|^4 u_2\bigr|
	\le C_3\bigl(|u_1|^4+|u_2|^4\bigr)|u|,
	$$
	and carrying out some simple transformations, we derive
	\begin{equation} \label{uniquenss-diff}
		\p_t\|u\|^2\le -\gamma\|u\|^2-\nu\|\nabla u\|^2+C_4\bigl(\|u_1\|_{L^\infty}^4+\|u_2\|_{L^\infty}^4\bigr)\|u\|^2. 
	\end{equation}
	Since $\|u_i\|_{L^\infty}^2\le C_5\|u_i\|_1\|u_i\|_2$ and 
	$$
	\|u_i\|_{L^1(J_T,L^\infty)}^4\le C_5\|u_i\|_{C(J_T,H^1)}^2\|u_i\|_{L^2(J_T,H^2)}^2, 
	$$
	we see that the coefficient in front of~$\|u\|^2$ is an integrable function of time. Applying Gronwall's inequality to~\eqref{uniquenss-diff}, we conclude that $u\equiv0$. 
	
	\smallskip
	{\it Step~3: Analyticity\/}. 
	To prove the analyticity of the resolving operator for~\eqref{cgl}, \eqref{ic-cgl}, we repeat the scheme used in the case of the Navier--Stokes system. Namely, let us denote by $z\in \XX_T$ the solution of the linear problem
	\begin{equation} \label{cgl-linear}
		\p_t z+\gamma z-(\nu+i)\Delta z=\eta, \quad z(0)=u_0, 
	\end{equation}
	and denote by $A:\YY_T\to\XX_T$ its resolving operator when $u_0=0$. We seek a solution of~\eqref{cgl}, \eqref{ic-cgl} in the form $u=z+A\zeta$, where $\zeta\in\YY_T$ is an unknown function. Substituting this expression into~\eqref{cgl} and setting $B(u)=ic|u|^4u$, we see that~$\zeta$ must be a solution of \eqref{functional-equation}. We know that~\eqref{functional-equation} has a unique solution $\zeta\in\YY_T$ for any $z\in\XX_T$; cf.\ Steps~1 and~2. If we prove that~$F$ is an analytic map in~$\YY_T$ whose derivative with respect to~$\zeta$ is an invertible linear operator in~$\YY_T$, then the required result will follow from the analytic implicit function theorem (see Section~3.3B in~\cite{berger1977}). The analyticity is a straightforward consequence of the facts that~$B$ is a continuous multilinear map from~$\XX_T$ to~$\YY_T$ and~$A$ is a continuous linear operator from~$\YY_T$ to~$\XX_T$. We thus need to prove the invertibility of the derivative~$D_\zeta F$.  

Let us note that
$$
(D_\zeta F)(\zeta,z)\xi=\xi+3|z+A\zeta|^4A\xi+2|z+A\zeta|^2(z+A\zeta)^2\overline{A\xi}, \quad \xi\in\YY_T. 
$$
We now fix an arbitrary $f\in\YY_T$ and consider the equation
\begin{equation*}
	\xi+3|z+A\zeta|^4A\xi+2|z+A\zeta|^2(z+A\zeta)^2\overline{A\xi}=f. 
\end{equation*}
Setting $g=z+A\zeta\in\XX_T$ and $w=A\xi$, so that $w$ vanishes at $t=0$, we rewrite this equation in the form
\begin{equation} \label{cgl-derivative}
	\p_tw+\gamma w-(\nu+i)\Delta w+a(t,x)w+b(t,x)\bar w=f, \quad w(0)=0, 
\end{equation}
where $a=3|g|^4$ and $b=2|g|^2g^2$ are elements of the space $L^2(J_T,L^\infty)$.  It remains to note that~\eqref{cgl-derivative} is a zero-order perturbation of the linear problem~\eqref{cgl-linear}, and its unique solvability can easily be established by standard arguments.

	\smallskip
	{\it Step~4: Bound for the $H^2$ norm\/}. We now prove~\eqref{CGL-bound-solution}. In view of~\eqref{bound-energy}, it suffices to prove that the $H^2$-norm of a solution of~\eqref{cgl}, \eqref{ic-cgl} can be bounded at time $t=1$ in terms of the $H^1$-norm of~$u_0$. To this end, we calculate the derivative of the function $h(t)=t\|\Delta u(t)\|^2$. A simple calculation shows that 
\begin{equation} \label{H2norm-derivation}
	\p_t h\le \|\Delta u\|^2+t\|\Delta u\|\bigl(\|\Delta B(u)\|+\|\eta\|_2\bigr), 
\end{equation}
where $B(u)$ was defined in Step~3. Now note that
$$
\|\Delta B(u)\|\le C_6\bigl(\|u\|_{L^\infty}^4+1\bigr)\|\Delta u\|
\le C_7\bigl(U(t)+1\bigr)\|\Delta u\|,
$$
where $U(t)=\|u\|_1^2\|u\|_2^2$ is a locally integrable function of time. Substituting this inequality into~\eqref{H2norm-derivation}, we obtain
$$
\p_t h\le C_8\bigl(U(t)+1\bigr)h+\|\Delta u\|^2+\|\eta\|_2^2. 
$$
Applying Gronwall's inequality and using~\eqref{boundintegralH2}, we arrive at the required estimate. This completes the proof of the theorem. 
\end{proof}

\begin{proof}[Proof of Lemma~\ref{l-gronwall}]
Let us set $\alpha=(1-r)^{-1}$ and $\psi=\varphi^{1/\alpha}$. Then $\psi$ is an absolutely continuous function on~$J_T$. Substituting $\varphi=\psi^\alpha$ into~\eqref{gronwall-hypothesis}, we derive 
	$$
	\alpha\psi^{\alpha-1}\dot\psi\le -a\psi^\alpha+b+C(t)\psi^{\alpha r}. 
	$$
	Dividing this inequality by~$\alpha\psi^{\alpha-1}$ and using the relations  $\alpha r-\alpha+1=0$ and $\psi\ge1$, we obtain
	$$
	\dot \psi\le -a(1-r)\psi+\bigl(b+C(t)\bigr)(1-r). 
	$$
	Application of the usual Gronwall inequality results in
	$$
	\psi(t)\le \bigl(e^{-at}\varphi(0)\bigr)^{1-r}+\frac{b}{a}+(1-r)\int_0^t e^{-a(1-r)(t-s)}C(s)\,\dd s. 
	$$
	This implies the required inequality~\eqref{gronwall-conclusion} with $c_r=2^{r/(1-r)}$. 
\end{proof}

\addcontentsline{toc}{section}{Bibliography}
\def\cprime{$'$} \def\cprime{$'$}
  \def\polhk#1{\setbox0=\hbox{#1}{\ooalign{\hidewidth
  \lower1.5ex\hbox{`}\hidewidth\crcr\unhbox0}}}
  \def\polhk#1{\setbox0=\hbox{#1}{\ooalign{\hidewidth
  \lower1.5ex\hbox{`}\hidewidth\crcr\unhbox0}}}
  \def\polhk#1{\setbox0=\hbox{#1}{\ooalign{\hidewidth
  \lower1.5ex\hbox{`}\hidewidth\crcr\unhbox0}}} \def\cprime{$'$}
  \def\polhk#1{\setbox0=\hbox{#1}{\ooalign{\hidewidth
  \lower1.5ex\hbox{`}\hidewidth\crcr\unhbox0}}} \def\cprime{$'$}
  \def\cprime{$'$} \def\cprime{$'$} \def\cprime{$'$}
\providecommand{\bysame}{\leavevmode\hbox to3em{\hrulefill}\thinspace}
\providecommand{\MR}{\relax\ifhmode\unskip\space\fi MR }
\providecommand{\MRhref}[2]{%
  \href{http://www.ams.org/mathscinet-getitem?mr=#1}{#2}
}
\providecommand{\href}[2]{#2}

\end{document}